 \patchcmd\Gread@eps{\@inputcheck#1 }{\@inputcheck"#1"\relax}{}{}
\def\appendix#1{
\addtocounter{section}{1} \setcounter{equation}{0}
\renewcommand{\thesection}{\Alph{section}}
\section*{Appendix \thesection\protect\indent\quad
#1}
}
\renewcommand{\theequation}{\thesection.\arabic{equation}}
\def\marginnote#1{}
\newtoks\amorpm
\edef\standardtime{{\ifnum\hour<12 \global\amorpm={am}%
        \else\global\amorpm={pm}\advance\hour by-12 \fi
        \ifnum\hour=0 \hour=12 \fi
        \number\hour:\ifnum\minute<10 0\fi\number\minute\the\amorpm}}
\edef\militarytime{\number\hour:\ifnum\minute<100\fi\number\minute}
\newcommand{\tcr}{\textcolor{red}}
\newcommand{\tcw}{\textcolor{white}}
\def\draftlabel#1{{\@bsphack\if@filesw {\let\thepage\relax
      \xdef\@gtempa{\write\@auxout{\string
          \newlabel{#1}{{\@currentlabel}{\thepage}}}}}\@gtempa \if@nobreak
    \ifvmode\nobreak\fi\fi\fi\@esphack} \gdef\@eqnlabel{#1}}
    \def\@eqnlabel{}
\def\@vacuum{}
\def\draftmarginnote#1{\marginpar{\raggedright\scriptsize\tt#1}}
\def\draft{
%
%
  \oddsidemargin -.5truein
  \def\@oddfoot{\footnotesize \sl preliminary draft \hfil
    \rm\thepage\hfil\sl\today\quad\militarytime}
  \let\@evenfoot\@oddfoot \overfullrule 3pt
    \let\label=\draftlabel
    \let\marginnote=\draftmarginnote
  \def\@eqnnum{(\theequation)\rlap{\kern\marginparsep\tt\@eqnlabel}%
    \global\let\@eqnlabel\@vacuum}

  }
\newcommand{\tr}{\,{\rm Tr}\,}
\def\be{\begin{equation}}
\def\ee{\end{equation}}
\def\bea{\begin{align}}
\def\eea{\end{align}}
\def\<{\langle}
\def\>{\rangle}
\def\A{\mathcal{A}}
\def\B{\mathcal{B}}
\def\BB{\mathfrak{B}}
\def\PP{\mathcal{P}}
\def\C{\mathcal{C}}
\def\E{\mathcal{E}}
\def\T{\mathcal{T}}
\def\M{{\mathcal M}}
\def\tr{\mathop{\rm{tr}}}
\def\ocomma{{\phantom{\Bigm|}^{\phantom {X}}_{\raise-1.5pt\hbox{,}}\!\!\!\!\!\!\otimes}}
\renewcommand\emph[1]{{\it #1}}
\newcommand{\col}[1]{{\raise-2pt\hbox{\tiny$\bullet$}\hskip -4.5pt \raise4pt\hbox{\tiny$\bullet$}{{#1}} \raise-2pt\hbox{\tiny$\bullet$}\hskip -4.5pt \raise4pt\hbox{\tiny$\bullet$}}}
\newcommand{\sheet}[2]{{\stackrel{{#1}}{{#2}}}}
\newtheorem{theorem}{Theorem}[section]
\newtheorem{lemma}[theorem]{Lemma}
\theoremstyle{definition}
\newtheorem{example}[theorem]{Example}
\newtheorem{remark}[theorem]{Remark}
\newtheorem{conjecture}[theorem]{Conjecture}
\begin{document}

\title{Symplectic groupoid and cluster algebras}








\author{Leonid O. Chekhov\thanks{Steklov Mathematical Institute, Moscow, Russia, National Research University Higher School of Economics, Russia, and Michigan State University, East Lansing, USA. Email: chekhov@msu.edu.} 
\ and Michael Shapiro\thanks{Department of Mathematics, Michigan State University, East Lansing, MI 48823, USA. Email: mshapiro@msu.edu.}
}

\maketitle

\begin{abstract}
We consider the symplectic groupoid of pairs $(B,\mathbb{A})$ with $\mathbb A$ unipotent upper-triangular matrices and $B\in GL_n$ being such that $\widetilde {\mathbb A}=B{\mathbb A} B^{\text{T}}$ are also unipotent upper-triangular matrices. We explicitly solve this groupoid condition using Fock--Goncharov--Shen cluster variables and show that for $B$ satisfying the standard semiclassical Lie--Poisson algebra, the matrices $B$, $\mathbb A$, and $\widetilde{\mathbb A}$ satisfy the closed Poisson algebra relations expressible in the $r$-matrix form. Identifying entries of $\mathbb A$ and $\widetilde {\mathbb A}$ with geodesic functions for geodesics on the two halves of a closed Riemann surface of genus $g=n-1$ separated by the Markov element, we are able to construct the geodesic function $G_B$ ``dual'' to the Markov element. We thus obtain the complete cluster algebra description of Teichm\"uller space $\mathcal T_{2,0}$ of genus two. We discuss also the generalization of our construction for higher genera. For genus larger than three we need a Hamiltonian reduction based on the rank condition $\hbox{rank\,}({\mathbb A}+{\mathbb A}^{\text{T}})\le 4$; we present the example of such a reduction for $\mathcal T_{4,0}$.

{\it Dedicated to the memory of great mathematician and person Igor Krichever}
\end{abstract}

\section{Introduction}

\begin{itemize}
\item
Denote by $\A_n$ the space of unipotent upper-triangular $n\times n$ real matrices. The symplectic groupoid of upper-triangular matrices 
$\M$
is formed by pairs $(B,{\mathbb A})$ such that $B\in GL_n(\mathbb{R}), {\mathbb A}\in \A_n$ satisfying $B{\mathbb A}B^\text{T}\in \A_n$. Here, $\mathbb A$ is considered as an object and $B$ as a morphism taking $\mathbb A$ to $B\mathbb AB^\text{T}$. Natural source and target maps $s$ and $t$ from $\M$ to $\A_n$ are defined $s((B,\mathbb A))=\mathbb A$, $t((B,\mathbb A))=B\mathbb AB^\text{T}$. 
Let $\M^{(2)}\subset \M\times \M$ be the subset of all compatible pairs $\left\{\left((C,B\mathbb AB^\text{T}),(B,\mathbb A)\right)\right\}$ and $p_1,p_2$ be two natural projections $\M^{(2)}\to \M$ to the first and to the second component, $m:\M^{(2)}\to\M$ be the multiplication map 
$m\left({\left( (C,B\mathbb AB^\text{T} ), (B,\mathbb A) \right)}\right)=(CB,\mathbb A)$.
Symplectic groupoid $\M$ is equipped with a natural symplectic form $\omega$ satisfying $m^*(\omega)=p_1^*(\omega)+p_2^*(\omega)$$\cite{Weinstein}$. Push forwards by $(p_1)_*$  
the nondegenerate Poisson structure $\PP_s$ dual to $\omega$
makes $\A$ into a Poisson manifold equipped with the \emph{reflection} Poisson bracket $\{,\}$ satisfying relation
$(p_1)_*(\PP_s)=-(p_2)_*(\PP_s)$ (see (\ref{A-A}) and (\ref{At-At})).


Recall that the group $SL_n$ is a Poisson variety equipped with the standard trigonometric Poisson-Lie bracket $\{,\}_{SL_n}$. Consider subset $\B_n\subset SL_n$ of matrices $B$ such that there is a pair $(B,\mathbb A)\in\M$. We show that $\B_n$ is a symplectic leaf of $\{,\}_{SL_n}$.
Moreover, we show that the choice of $B\in\B_n$  determines unique $\mathbb A\in\A_n$ such that $(B,\mathbb A)\in\M$. This construction determines a map $\psi$ from $\B_n$ to $\A_n$. It was shown in \cite{ChM4} that the map $\psi:(\B_n,\{,\}_{SL_n})\to(\A_n,\{,\})$ is Poisson.
In this paper we provide another, simpler proof of this claim using the network realization of matrix $B$ and matrices $\mathbb A$ and $B\mathbb AB^\text{T}$ obtained by studying the moduli space $\PP_{SL_n,\square}$ of $SL_n$-pinnings on the planar square introduced in~\cite{GS19}.

The Poisson variety $SL_n,\{,\}_{SL_n}$ has a compatible cluster structure, i.e., the collection of cluster coordinate charts $\C(t)=\{z_i(t)\},t\in\E$ where $\E$ is the exchange graph of the cluster algebra, such that $\{z_i(t),z_j(t)\}=\epsilon_{ij} z_i(t) z_j(t)$, $\epsilon_{ij}$ form a skew-symmetric matrix with half-integer entries. We extend the map $\psi$ to the Poisson map  $\widetilde{\psi}:\B_n\to\A_n\times\A_n$ which maps $B\mapsto ({\mathbb A},B\mathbb AB^\text{T})$. The image ${\mathcal I}=\widetilde{\psi}(\B_n)$ is of codimension $\lfloor{\frac{n}{2}}\rfloor$ subvariety of $\A_n\times\A_n$ defined by the conditions that the values of some Casimirs of $\A_n$ coincide for $\mathbb A$ and $B\mathbb AB^\text{T}$. One can think of $\B_n$ as $\lfloor{\frac{n}{2}}\rfloor$-parameter extension of  $\mathcal I$. The symplectic leaf $\B_n$ possesses a cluster structure obtained by restricting and amalgamating the cluster structure from the moduli space of pinnings.

\item

There is a well known braid group action on $\A_n$. To describe this action we identify the elements of the space $\A_n$ of the objects of symplectic groupoid $\M$ with the space of triangular unipotent bilinear forms. The morphisms are changes of basis that preserve  the triangular unipotent shape of the form. For a fixed form $\Phi$ let's call the basis compatible if the form has the triangular unipotent shape.
The braid group $\mathfrak{B}_n$ acts on the set of compatible bases. The $i$th generator $\beta_i$ maps the basis $(v_1,\dots,v_n)$ to the basis $(v_1,\dots,v_{i+1}, v_i-\Phi(v_i,v_{i+1}) v_{i+1},v_{i+2},\dots,v_n)$.

In the matrix form $\beta_i$ is realized as the action $\beta_i(\mathbb A)=B_i(\mathbb A)\mathbb AB_i(\mathbb A)^\text{T}$ where the matrix $B_i(\mathbb A)$ has the block form
$B_i(\mathbb A)=\begin{pmatrix}
\rm{Id}_{i-1} & 0 & 0 \\ 
0 & \boxed{\begin{matrix} 
a_{i,i+1} & 1 \\
-1 & 0
\end{matrix}} & 0 \\
0 & 0 & \rm{Id}_{n-i-1}
\end{pmatrix}
$ with $\rm{Id}_k$ denoting the $k\times k$ identity matrix and $a_{i,j}$ denoting entries of the matrix $\mathbb A$.

We show below that the braid group action is cluster, i.e., can be obtained as a sequence of cluster mutations.

\item
The algebra $\mathbb{C}(\T_{g,s})$ of functions on the Teichm\"uller space $\T_{g,s}$ of curves $\Sigma_{g,s}$ of genus $g$ with $s$ holes is generated by the geodesic functions $\{G_\ell\}$, $\ell$ is a simple loop on a topological surface  $\Sigma_{g,s}$. Functions $G_\ell$ satisfy skein relations based on two ``resolutions'' of the crossing between two geodesics. All algebraic relations hold for the corresponding geodesic functions (we assume that the pattern outside the circle containing the crossing remains intact): 
$$
\begin{pspicture}(-4,-1)(4,1){\psset{unit=0.7}
\rput(-3.5,0){
\psclip{\pscircle[linewidth=1.5pt, linestyle=dashed](0,0){1}}
\rput(0,0){\psline[linewidth=1.5pt,linecolor=red, linestyle=dashed](1,-1)(-1,1)}
\rput(0,0){\psline[linewidth=3pt,linecolor=white](-1,-1)(1,1)}
\rput(0,0){\psline[linewidth=1.5pt,linecolor=blue, linestyle=dashed](-1,-1)(1,1)}
\endpsclip
\rput(0,-1.2){\makebox(0,0)[ct]{$G_1G_2$}}
\rput(0.8,0.8){\makebox(0,0)[lb]{$\gamma_1$}}
\rput(0.8,-0.8){\makebox(0,0)[lt]{$\gamma_2$}}
}
\rput(0,0){
\psclip{\pscircle[linewidth=1.5pt, linestyle=dashed](0,0){1}}
\rput(0,-1.4){\psarc[linewidth=1.5pt,linecolor=green, linestyle=dashed](0,0){1}{45}{135}}
\rput(0,1.4){\psarc[linewidth=1.5pt,linecolor=green, linestyle=dashed](0,0){1}{225}{315}}
\endpsclip
\rput(0,-1.2){\makebox(0,0)[ct]{$G_I$}}
\rput(-1.2,0){\makebox(0,0)[rc]{$1\cdot$}}
}
\rput(3.3,0){
\psclip{\pscircle[linewidth=1.5pt, linestyle=dashed](0,0){1}}
\rput(-1.4,0){\psarc[linewidth=1.5pt,linecolor=green, linestyle=dashed](0,0){1}{-45}{45}}
\rput(1.4,0){\psarc[linewidth=1.5pt,linecolor=green, linestyle=dashed](0,0){1}{135}{225}}
\endpsclip
\rput(0,-1.2){\makebox(0,0)[ct]{$G_H$}}
\rput(-1.2,0){\makebox(0,0)[rc]{$1\cdot$}}
}
\rput(-1.7,0){
\rput(0,0){\makebox(0,0){$=$}}}
\rput(1.3,0){
\rput(0.2,0){\makebox(0,0){$+$}}}
}
\end{pspicture}
$$
The algebra $\mathbb{C}(\T_{g,s})$ is Poisson with Goldman Poisson bracket, which, for the same choice of geodesics and the corresponding geodesic functions, has the form
$$
\begin{pspicture}(-4,-1)(4,1){\psset{unit=0.7}
\rput(-3.5,0){
\psclip{\pscircle[linewidth=1.5pt, linestyle=dashed](0,0){1}}
\rput(0,0){\psline[linewidth=1.5pt,linecolor=red, linestyle=dashed](1,-1)(-1,1)}
\rput(0,0){\psline[linewidth=3pt,linecolor=white](-1,-1)(1,1)}
\rput(0,0){\psline[linewidth=1.5pt,linecolor=blue,  linestyle=dashed](-1,-1)(1,1)}
\endpsclip
\rput(0,-1.2){\makebox(0,0)[ct]{$\{G_1,G_2\}$}}
\rput(0.8,0.8){\makebox(0,0)[lb]{$\gamma_1$}}
\rput(0.8,-0.8){\makebox(0,0)[lt]{$\gamma_2$}}
}
\rput(0,0){
\psclip{\pscircle[linewidth=1.5pt, linestyle=dashed](0,0){1}}
\rput(0,-1.4){\psarc[linewidth=1.5pt,linecolor=green, linestyle=dashed](0,0){1}{45}{135}}
\rput(0,1.4){\psarc[linewidth=1.5pt,linecolor=green, linestyle=dashed](0,0){1}{225}{315}}
\endpsclip
\rput(0,-1.2){\makebox(0,0)[ct]{$G_I$}}
\rput(-1.2,0){\makebox(0,0)[rc]{$\dfrac12$}}
}
\rput(3.3,0){
\psclip{\pscircle[linewidth=1.5pt, linestyle=dashed](0,0){1}}
\rput(-1.4,0){\psarc[linewidth=1.5pt,linecolor=green, linestyle=dashed](0,0){1}{-45}{45}}
\rput(1.4,0){\psarc[linewidth=1.5pt,linecolor=green, linestyle=dashed](0,0){1}{135}{225}}
\endpsclip
\rput(0,-1.2){\makebox(0,0)[ct]{$G_H$}}
\rput(-1.2,0){\makebox(0,0)[rc]{$\dfrac12$}}
}
\rput(-1.7,0){
\rput(0,0){\makebox(0,0){$=$}}}
\rput(1.3,0){
\rput(0.2,0){\makebox(0,0){$-$}}}
}
\end{pspicture}
$$
\item
In the case $s\ge 1$ an ideal triangulation of surface $\Sigma_{g,s}$ with the set of edges $E$ gives parametrization $\T_{g,s}$ by so-called shear coordinates $y_e,\ e\in E$ introduced by W.Thurston. The shear coordinates have very nice Poisson properties; they are log-canonical coordinates generaing the Goldman bracket. Namely, the Goldman bracket satisfy relations $\{y_e,y_{e'}\}=\epsilon_{e,e'}$ where $\epsilon_{e,e'}=\sum_{\Delta}\epsilon_{e,e'}(\Delta)$ taken over all triangles $\Delta$ of the triangulation and $\epsilon_{e,e'}(\Delta)=1/2$ if $e'$ follows $e$ in counterclockwise direction inside $\Delta$, $\epsilon_{e,e'}(\Delta)=-1/2$ if $e$ follows $e'$ in counterclockwise direction inside $\Delta$,
and $\epsilon_{e,e'}(\Delta)=0$ otherwise. Dually, one can replace triangulation with the dual 3-valent ribbon graph with the same parameter $y_e$ assigned to the edge of the ribbon graph dual to the edge $e$ of the triangulation. We will denote the dual edge by the same letter $e$. 

In \cite{Fock93} an expression for geodesic functions in terms of shear coordinates was found. In \cite{ChF3}, the authors considered the special collection of loops on genus $\lfloor{\frac{n}{2}}\rfloor$ surface with $s=2-(n\ \hbox{mod}\ 2)$ holes (where the parity  $n\ \hbox{mod}\ 2=0$ for even $n$ and $n\ \hbox{mod}\ 2=1$ for odd $n$). That collection gives rise to the corresponding set of geodesic functions $G_{i,j}, i<j$. The map that takes the surface to the matrix 
$\begin{pmatrix}
1 &  & G_{i,j} \\
 & \ddots & \\
0 & & 1
\end{pmatrix}$ is a Poisson embedding $\T_{g,s}\to \A_n$.

Recall that a morphism $B\in \B$ determines uniquely the pair: the source object ${\mathbb A}\in\A_n$ and the target object $\tilde{\mathbb A}=B{\mathbb A}B^\text{T}\in\A_n$. The map $B\mapsto ({\mathbb A},\tilde{\mathbb A})$ defines a fiber bundle with fibers of positive dimension.

We consider in details the case $n=3$. In this case the projection $B\mapsto({\mathbb A},\tilde{\mathbb A})$ has a one-dimensional fiber. By~\cite{ChF3}, each of the matrices ${\mathbb A}$ and $\tilde{\mathbb A}$ encodes a hyperbolic torus with one hole. Moreover, the geodesic length $\ell_{\mathfrak M}$ of the hole is determined by the so-called Markov element 
${\mathfrak M}=G_{1,2}G_{2,3}G_{1,3}-G_{1,2}^2-G_{2,3}^2-G_{1,3}^2=e^{\ell_{\mathfrak M}/2}+e^{-\ell_{\mathfrak M}/2}-2$ which coincide for $\mathbb A$ and $\widetilde{\mathbb  A}$. Since the lengths of the geodesic boundaries of both tori coincide one can glue two hyperbolic tori along the boundary to obtain a hyperbolic genus two surface with no holes.
The extra parameter in $B$ determines the gluing. Therefore the space $\B_3$ equipped with cluster structure describes the Teichm\"uller space of closed genus two curves $\T_{2,0}$ and its cluster structure is compatible with the Goldman Poisson bracket on $\T_{2,0}$. To the best of our knowledge such construction is new.

The previous approaches to construct cluster coordinates for surfaces with no boundaries were documented in \cite{BW} however,  their construction includes taking a Hamiltonian reduction by some expression which is hard to compute and whose cluster meaning needs to be clarified.  
We do not need such a Hamiltonian reduction in our approach for $g=2$ and $g=3$, but we need a Hamiltonian reduction of a different sort for higher genera. 

\item
We omitted this construction's quantum version to make the text more readable. The quantum algebras will be described in a separate publication.

\item
The main results of this paper include 
\begin{itemize}
\item solution of groupoid condition in terms of a planar network;
\item construction of cluster coordinates on the space of (an extension of) pairs (source,target) objects of symplectic groupoid of triangular unipotent forms compatible with the natural Poisson bracket;
\item cluster representation of braid group action
\item construction of cluster coordinates on Teichm\"uller space of closed (with no boundary) genus two hyperbolic Riemann surfaces. Some elements of this construction are built for higher genus Riemann surfaces. 
\end{itemize} 

\end{itemize}

\section{Semi-classical symplectic groupoid}

\subsection{Solving the groupoid compatibility condition}


Recall that the moduli space of pinnings $\PP_{SL_n,\Sigma}$~\cite{GS19} on a hyperbolic Riemann surface $\Sigma$ with marked points is described by the collection of so-called Fock-Goncharov-Shen (FGS-) parameters. 
FGS-parameters are defined by an ideal triangulation $T$ of $\Sigma$ with vertices of triangles at the marked points and geodesic sides of triangles. It is convenient to subdivide each triangle into the lattice of smaller triangles (each side of $T$ is subdivided into $n$ subintervals) and assign one FGS-parameter to each vertex of this lattice (see Figure~\ref{fi:triSL3}). The pinning described by the collection of the FGS-parameter assigns a basis in $\mathbb{R}^n$ to every side of $T$ and, therefore, determines the \emph{transport matrix} between two sides of every triangle of $T$.  All transport matrices can be realized as "boundary measurement" matrices (or response matrices)~\cite{Po, ChSh20} of the appropriately oriented networks with face weights given by the FGS-parameters. The Goldman Poisson bracket is described by the quiver $Q$ with vertices equipped with FGS-parameters $Z_{abc}$, more precisely, $\{Z_{abc},Z_{pqr}\}=\varepsilon_{abc}^{pqr} Z_{abc},Z_{pqr}$ where $\varepsilon_{abc}^{pqr}$ is the number of arrows in $Q$ from vertex $abc$ to the vertex $pqr$ minus  the number of arrows from vertex $pqr$ to the vertex $abc$.

	\begin{figure}[h]
	\centering
\psscalebox{0.8}{
\begin{pspicture}(-3,-3)(3,3.5){
%


\pspolygon[linecolor=lightgray,fillstyle=crosshatch,fillcolor=black](-1.25,2)(1.25,2)(0,0)
\pspolygon[linecolor=lightgray,fillstyle=crosshatch,fillcolor=black](1.25,-2)(2.5,0)(0,0)
\pspolygon[linecolor=lightgray,fillstyle=crosshatch,fillcolor=black](-1.25,-2)(-2.5,0)(0,0)

\psarc[doubleline=true,linewidth=3pt, doublesep=8pt, linecolor=red]{<-}(-3.9,-2.0){3}{-20}{70}
\psarc[doubleline=true,linewidth=3pt, doublesep=8pt, linecolor=red]{<-}(0,4){3}{220}{320}
\psarc[doubleline=true,linewidth=3pt, doublesep=8pt, linecolor=red]{<-}(3.9,-2.0){3}{105}{200}

\put(0,0){\psline[linecolor=blue,linewidth=1pt]{-}(0.5,0)(2.,0)}
\put(0,0){\psline[linecolor=blue,linewidth=1pt]{-}(0.25,-0.4)(1.,-1.6)}
\put(0,0){\psline[linecolor=blue,linewidth=1pt]{-}(1.5,-1.6)(2.3,-0.4)}
\put(0,0){\psline[linecolor=blue,linewidth=1pt]{-}(-1.5,-1.6)(-2.3,-0.4)}
\put(0,0){\psline[linecolor=blue,linewidth=1pt]{-}(-0.25,-0.4)(-1.,-1.6)}
\put(0,0){\psline[linecolor=blue,linewidth=1pt]{-}(-0.5,0)(-2.,0)}
\put(0,0){\psline[linecolor=blue,linewidth=2pt,linestyle=dashed]{-}(-3.75,-2.)(0.0,3.5)}
\put(0,0){\psline[linecolor=blue,linewidth=2pt,linestyle=dashed]{-}(0.0,3.5)(3.75,-2)}
\put(0,0){\psline[linecolor=blue,linewidth=2pt,linestyle=dashed]{-}(-3.75,-2.)(3.75,-2)}
\put(0,0){\pscircle[linecolor=black,fillstyle=solid,fillcolor=white]{.35}}
\put(0.3,-0.1){\makebox(0,0)[br]{\hbox{{\tiny $Z_{111}$}}}}
\put(0,0){\psline[linecolor=blue,linewidth=1pt]{-}(0.25,0.4)(1.,1.6)}
\put(1.25,2){\pscircle[linecolor=black,fillstyle=solid,fillcolor=white]{.35}}
\put(1.55,1.9){\makebox(0,0)[br]{\hbox{{\tiny $Z_{012}$}}}}
\put(0,0){\psline[linecolor=blue,linewidth=1pt]{-}(-0.25,0.4)(-1.,1.6)}
\put(-1.25,2){\pscircle[linecolor=black,fillstyle=solid,fillcolor=white]{.35}}
\put(-0.95,1.9){\makebox(0,0)[br]{\hbox{{\tiny $Z_{102}$}}}}
\put(0,0){\psline[linecolor=blue,linewidth=1pt]{-}(0.75,2)(-.75,2)}
\put(2.5,0){\pscircle[linecolor=black,fillstyle=solid,fillcolor=white]{.35}}
\put(2.8,-.1){\makebox(0,0)[br]{\hbox{{\tiny $Z_{021}$}}}}
\put(-2.5,0){\pscircle[linecolor=black,fillstyle=solid,fillcolor=white]{.35}}
\put(-2.2,-.1){\makebox(0,0)[br]{\hbox{{\tiny $Z_{201}$}}}}
\put(1.25,-2){\pscircle[linecolor=black,fillstyle=solid,fillcolor=white]{.35}}
\put(1.55,-2.1){\makebox(0,0)[br]{\hbox{{\tiny $Z_{120}$}}}}
\put(-1.25,-2){\pscircle[linecolor=black,fillstyle=solid,fillcolor=white]{.35}}
\put(-.95,-2.1){\makebox(0,0)[br]{\hbox{{\tiny $Z_{210}$}}}}

\put(-1.5,-1.1){\psframe[framearc=0.2,linecolor=black,fillstyle=solid,fillcolor=white](0,0)(0.8,0.8)}
\put(-.8,-.9){\makebox(0,0)[br]{\hbox{{\Large $\overline T_1$}}}}
\put(0.8,-1.1){\psframe[framearc=0.2,linecolor=black,fillstyle=solid,fillcolor=white](0,0)(0.8,0.8)}
\put(1.,-.9){\makebox(0,0)[bl]{\hbox{{\Large $\widetilde T_1$}}}}
\put(-0.4,0.7){\psframe[framearc=0.2,linecolor=black,fillstyle=solid,fillcolor=white](0,0)(0.8,0.8)}
\put(0,.9){\makebox(0,0)[bc]{\hbox{{\Large $T_1$}}}}

}
\end{pspicture}
}
\caption{\small
FGS-parameters of $\PP_{SL_3,\triangle}$. Double arrows show the direction of transport matrices $T_1,
\widetilde T_1, \overline T_1$.
}
\label{fi:triSL3}
\end{figure}

In this paper, we consider only two cases: either $\Sigma$ is the disk with three punctures (marked points) on the boundary, or the disk with four punctures  (marked points) on the boundary. In the first case, we call $\Sigma$ \emph{triangle} and denote it by $\triangle$, in the second case, we call $\Sigma$ \emph{square} and denote it by $\square$.

In this section we solve the  groupoid compatibility problem: \emph{ find the pairs $(B,\mathbb A)$ such that $B\in SL_n$,  $\mathbb A\in\A_n$, and $\widetilde{\mathbb  A}=B\mathbb AB^\text{T}\in\A_n$ in terms of transport matrices in the square.}

Let us consider $\PP_{SL_n,\square}$. Our main object is the network and the corresponding quiver obtained by the \emph{amalgamation} of two FGS triangle-shaped networks (see, Fig.~\ref{quadrangle})~\cite{FG1}. Let $T_1$, $\widetilde T_1$, and $\overline T_1$ be (upper-triangular, non-normalized) transport matrices in the right triangle and let $T_2$, $\widetilde T_2$, and $\overline T_2$ be (lower-triangular, non-normalized) transport matrices in the left triangle. Cluster variables in different triangles Poisson commute. 

\begin{figure}[h]
	\centering
\begin{pspicture}(-2,-1.2)(2,1.2){\psset{unit=0.7}
\rput(1.2,0){\pspolygon[linecolor=blue,linestyle=dashed,linewidth=2pt](-2,-1.7)(2,-1.7)(0,1.7)
\psarc[doubleline=true,linewidth=1pt, doublesep=1pt, linecolor=black]{<-}(-2,-1.7){1.6}{0}{60}
\psarc[doubleline=true,linewidth=1pt, doublesep=1pt, linecolor=black]{<-}(2,-1.7){1.6}{120}{180}
\psarc[doubleline=true,linewidth=1pt, doublesep=1pt, linecolor=black]{<-}(0,1.7){1.6}{240}{300}
}
\rput{180}(-1.2,0){\pspolygon[linecolor=blue,linestyle=dashed,linewidth=2pt](-2,-1.7)(2,-1.7)(0,1.7)
\psarc[doubleline=true,linewidth=1pt, doublesep=1pt, linecolor=black]{->}(-2,-1.7){1.6}{0}{60}
\psarc[doubleline=true,linewidth=1pt, doublesep=1pt, linecolor=black]{->}(2,-1.7){1.6}{120}{180}
\psarc[doubleline=true,linewidth=1pt, doublesep=1pt, linecolor=black]{->}(0,1.7){1.6}{240}{300}
}
\put(1.2,0.5){\makebox(0,0)[bc]{\hbox{{$T_1$}}}}
\put(0.1,-1.6){\makebox(0,0)[bc]{\hbox{{$\overline T_1$}}}}
\put(2.3,-1.6){\makebox(0,0)[bc]{\hbox{{$\widetilde T_1$}}}}
\put(-1.2,-0.5){\makebox(0,0)[tc]{\hbox{{$T_2$}}}}
\put(-0.1,1.6){\makebox(0,0)[tc]{\hbox{{$\widetilde T_2$}}}}
\put(-2.3,1.6){\makebox(0,0)[tc]{\hbox{{$\overline T_2$}}}}
}
\end{pspicture}
\caption{\small
Transport matrices in the triangulation of the square.
}
\label{quadrangle}
\end{figure}
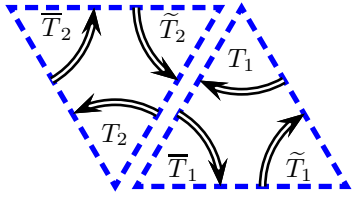

Every transport matrix $T$ satisfies the Lie--Poisson algebra~\cite{ChSh20}
\be\label{LP}
\Bigl\{ \sheet{1}{T}\ocomma \sheet{2}{T}\Bigr\}=-r \sheet{1}{T} \sheet{2}{T}+\sheet{1}{T} \sheet{2}{T}r=r^{\text{T}} \sheet{1}{T} \sheet{2}{T}- \sheet{1}{T} \sheet{2}{T}r^{\text{T}},
\ee
where $r=\sum_{i,j}\theta(i-j)\sheet{1}E_{i,j}\sheet{2}E_{j,i}$ is the trigonometric $r$-matrix; $\theta(x)=\{1\ x>0; 1/2\ x=0; 0\ x<0\}$, and $r+r^\text{T}=P$---the permutation matrix. Having a transport matrix $T$, the transport matrix $T^\star$ in the opposite direction is related to the inverse of $T$ as
\be
T^\star = S T^{-1} S,\ \hbox{where}\ S=\sum_{i=1}^n (-1)^i E_{i, n+1-i},\ S^2=\pm Id,\ S^\text{T}=\pm S
\ee 
with sign $+$ for even $n$ and sign $-$ for odd $n$.

Then the basic relation proved in \cite{ChSh20},\cite{GSV}, 
$$
\Bigl\{ \sheet{1}{T_1}\ocomma \sheet{2}{\widetilde T^\star_1}\Bigr\}=\sheet{1}{T_1} \sheet{2}{\widetilde T^\star_1}r,
$$
implies 
\be
\Bigl\{ \sheet{1}{T_1}\ocomma \sheet{2}{\widetilde T_1}\Bigr\}=- \sheet{1}{T_1} (\sheet{2}S r \sheet{2} S) \sheet{2}{\widetilde T_1}
\ee
with analogous relations holding for the pairs $(\widetilde T_1,\overline T_1)$ and $(\overline T_1, T_1)$ permuted in the cyclic order. 

The corresponding relation for $T_2$ reads
\be
\Bigl\{ \sheet{1}{\widetilde T_2}\ocomma \sheet{2}{T_2}\Bigr\}=- \sheet{2}{T_2} (\sheet{1}S r \sheet{1} S) \sheet{1}{\widetilde T_2},
\ee
with analogous relations holding for other two pairs, $(\overline T_2, \widetilde T_2)$ and $(T_2, \overline T_2)$.

The {\bf groupoid path conditions} 
proved in \cite{ChSh20} in the quantum case, imply the following relations in the semiclassical limit $q\to 1$:
\be\label{gr1}
\widetilde T_1 S \overline T_1 S T_1 S=Id
\ee 
and
\be\label{gr2}
\widetilde T_2 S \overline T_2 S T_2 S=Id.
\ee

We use the network Fig.~\ref{quadrangle} to solve relaxed (\emph{non-normalized}) {\bf symplectic groupoid compatibility }problem. Let $\BB_n$ be the Borel subgroup of nondgenerate upper-triangular matrices of $SL_n$.   We say that the pair $(B,\mathbb A)$, $B\in SL_n$, $\mathbb A\in \BB_n$ satisfies the \emph{non-normalized} symplectic groupoid compatibility condition if $\mathbb A\in \BB_n$, $B\mathbb A B^\text{T}\in\BB_n$.
The following construction solves the non-normalized symplectic groupoid compatibility condition for the network depicted in Fig.~\ref{quadrangle}.

\begin{theorem}
For the matrix $B$ given by the product of transport matrices
\be\label{B}
B=T_2T_1.
\ee
in the network depicted in Fig.~(\ref{quadrangle}), the non-normalized symplectic groupoid condition that both $\mathbb A$ and $\widetilde{\mathbb A}:=B\mathbb AB^\text{T}$ are upper-triangular is resolved by taking
\be\label{A}
\mathbb A=T_1^{-1}S\widetilde T_2\widetilde T_1^\text{T} S.
\ee
Then
\be\label{t-A}
\widetilde{\mathbb A}:=B\mathbb AB^\text{T}=S [\overline T_2]^{-1}\bigl[ \overline T_1^\text{T}\bigr]^{-1} S T_2^\text{T}
\ee
is automatically upper triangular itself. When deriving these expressions we used the groupoid path relations (\ref{gr1}) and (\ref{gr2}).
\end{theorem}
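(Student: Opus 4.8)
The plan is to verify the two assertions separately: that $\mathbb A$ as defined in (\ref{A}) is upper-triangular (hence lies in $\BB_n$, being a product of invertible matrices), and that conjugating it by $B=T_2T_1$ produces exactly the expression (\ref{t-A}), which is then visibly upper-triangular. The whole argument rests on three elementary facts: (i) $T_1,\widetilde T_1,\overline T_1$ are upper-triangular while $T_2,\widetilde T_2,\overline T_2$ are lower-triangular; (ii) transposition interchanges upper- and lower-triangular matrices; and (iii) because $S$ is anti-diagonal, the conjugation $M\mapsto SMS^{-1}$ (equivalently $SMS$ up to the scalar $S^2$) interchanges the two triangular types. I would record at the outset the two signs $\epsilon,\delta\in\{\pm1\}$ defined by $S^\text{T}=\epsilon S$ and $S^2=\delta\,\mathrm{Id}$; a direct computation gives $\epsilon=\delta$ (both equal to $(-1)^{n+1}$), so in particular $\epsilon\delta=1$ and $S^{-1}=\delta S$, and these identities are precisely the bookkeeping the argument will require.

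For the first assertion I would group $\mathbb A=T_1^{-1}\bigl(S\,\widetilde T_2\widetilde T_1^\text{T}\,S\bigr)$. The inner product $\widetilde T_2\widetilde T_1^\text{T}$ is lower-triangular by (i)--(ii), so $S\,\widetilde T_2\widetilde T_1^\text{T}\,S$ is upper-triangular by (iii); left multiplication by the upper-triangular $T_1^{-1}$ preserves this, proving $\mathbb A\in\BB_n$.

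For the second assertion I would substitute $B=T_2T_1$ and $B^\text{T}=T_1^\text{T}T_2^\text{T}$ into $\widetilde{\mathbb A}=B\mathbb AB^\text{T}$; the factor $T_1T_1^{-1}$ cancels and leaves $\widetilde{\mathbb A}=T_2S\,\widetilde T_2\,\widetilde T_1^\text{T}ST_1^\text{T}\,T_2^\text{T}$. The idea is then to collapse the right-triangle block $\widetilde T_1^\text{T}ST_1^\text{T}$ and the left-triangle block $T_2S\widetilde T_2S$ independently, using the groupoid path relations. Rotating (\ref{gr1}) cyclically to $T_1S\widetilde T_1S\overline T_1S=\mathrm{Id}$, transposing it, and pushing each factor $S^\text{T}=\epsilon S$ through, I would solve for the block to obtain $\widetilde T_1^\text{T}ST_1^\text{T}=\epsilon\,S[\overline T_1^\text{T}]^{-1}S$; similarly the cyclic rotation $T_2S\widetilde T_2S\overline T_2S=\mathrm{Id}$ of (\ref{gr2}) gives at once $T_2S\widetilde T_2S=\delta\,S[\overline T_2]^{-1}$. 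Substituting both blocks and using $S^{-1}=\delta S$, the product reduces to $\epsilon\delta\,S[\overline T_2]^{-1}[\overline T_1^\text{T}]^{-1}ST_2^\text{T}$, and $\epsilon\delta=1$ yields precisely (\ref{t-A}). Upper-triangularity of the result is then immediate from the same bookkeeping: $[\overline T_2]^{-1}[\overline T_1^\text{T}]^{-1}$ is lower-triangular, $S(\cdot)S$ turns it upper-triangular, and the trailing $T_2^\text{T}$ is upper-triangular.

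The main obstacle is not conceptual but the sign bookkeeping around $S$: one must transpose the cyclic form of (\ref{gr1}) while correctly tracking each $S^\text{T}=\epsilon S$ and each $S^{-1}=\delta S$, and then confirm that the accumulated scalar is $\epsilon\delta=1$ for every $n$, so that no spurious sign survives. A secondary point of care is to select, for each of (\ref{gr1}) and (\ref{gr2}), the cyclic representative that places the factor to be eliminated ($\overline T_1$, respectively $\overline T_2$) on the side where it can be inverted cleanly; this is exactly what makes the two blocks reassemble into the stated product rather than into some conjugate of it.
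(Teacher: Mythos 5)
Your proposal is correct and follows essentially the same route as the paper's own proof: cancel $T_1T_1^{-1}$, isolate the blocks $T_2S\widetilde T_2$ and $\widetilde T_1^\text{T}ST_1^\text{T}$, collapse them via cyclically rotated (and, for the first, transposed) forms of the path relations (\ref{gr1}) and (\ref{gr2}), and read off triangularity from the fact that conjugation by the antidiagonal $S$ swaps upper- and lower-triangular types. Your explicit bookkeeping of $\epsilon=\delta=(-1)^{n+1}$ is in fact slightly more careful than the paper's sketch, whose two intermediate identities each suppress a sign $(-1)^{n+1}$ that cancels only in the final product --- exactly the cancellation $\epsilon\delta=1$ you verify.
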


\begin{proof} 
Indeed, factorize $\mathbb A=T_1^{-1}S\widetilde T_2\widetilde T_1^\text{T} S=
(-1)^{n+1}\cdot T_1^{-1}\cdot S\widetilde T_2 S\cdot S\widetilde T_1^\text{T} S$. Note that all the matrices  $T_1$, $S T_2 S$, and $S T_1^\text{T} S$ are upper-triangular by construction which implies that $\mathbb A$ is upper-triangular too.
Similarly, $\widetilde{\mathbb A}=S [\overline T_2]^{-1}\bigl[ \overline T_1^\text{T}\bigr]^{-1} S T_2^\text{T}=(-1)^{n+1}\cdot S [\overline T_2]^{-1}S\cdot S\bigl[ \overline T_1^\text{T}\bigr]^{-1} S\cdot T_2^\text{T}$ is upper-triangular and it remains to show that 
$\widetilde{\mathbb A}=B\mathbb A B^\text{T}$.

$B\mathbb A B^\text{T}=T_2 T_1 T_1^{-1}S\widetilde T_2\widetilde T_1^\text{T} S T_1^\text{T} T_2^\text{T}=
\left(T_2 S\widetilde T_2\right)\cdot\left(\widetilde T_1^\text{T} S T_1^\text{T}\right) T_2^\text{T}$. Using groupoid conditions~(\ref{gr1},\ref{gr2})
we rewrite the latter expression as $\left(S[\overline T_2]^{-1} S\right)\cdot\left(S[\overline T_1^\text{T}]^{-1} S\right)\cdot T_2^\text{T}=
S [\overline T_2]^{-1}[\overline T_1^\text{T}]^{-1} S T_2^\text{T}=\widetilde{\mathbb A}$.
\end{proof}

Now we discuss the normalization conditions that matrices  $\mathbb A$ and  $\widetilde{\mathbb A}$ are unipotent.
For  $B\in SL_n$, define $\delta_k=\det(B_{[n-k+1,n]}^{[1,k]})$, $\tilde\delta_k= \det(B_{[1,n-k]}^{[k+1,n]})$ for all $k\in[1,n-1]$, $\delta_0=\tilde\delta_4=1$, $\delta_4=\tilde\delta_0=\det(B)$. Here,  $I,J$ are two subsets of $[1,n]$ of the same cardinality $|I|=|J|=\ell$,
  $B_I^J$ denotes the $\ell\times\ell$ submatrix of $B$ formed by rows from $I$ and columns from $J$. 

\begin{lemma} 
Let $B\in SL_n$ satisfy  conditions $\delta_k/\tilde\delta_k=1$ for all $k\in [1,n-1]$. Then we have a unique $\mathbb A\in \A_n$ such that $B\mathbb A B^\text{T}\in\A_n$.
\end{lemma}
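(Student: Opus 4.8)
The plan is to read the requirement $B\mathbb A B^{\text T}\in\A_n$ as a linear system in the $\binom n2$ free entries of $\mathbb A=\Id+N$, $N$ strictly upper triangular, and to split it into the lower‑triangularity conditions $(B\mathbb A B^{\text T})_{ij}=0$ for $i>j$ ($\binom n2$ equations) together with the unipotency conditions $(B\mathbb A B^{\text T})_{ii}=1$ ($n$ equations). Since $\mathbb A\in\A_n$ is automatic from the ansatz, the whole content is in the target. The system is overdetermined by exactly $n$, and the point of the hypotheses is that the $n-1$ relations $\delta_k=\tilde\delta_k$, together with the single relation $\det B=1$ built into $B\in SL_n$ (recorded as $\delta_0=\tilde\delta_n=1$, $\delta_n=\tilde\delta_0=\det B$), make this excess consistent. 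Uniqueness will come from the lower‑triangularity block alone, existence from verifying the diagonal.

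First I would prove uniqueness. Writing $f_i$ for the $i$-th row of $B$, one has $(B\mathbb A B^{\text T})_{ij}=f_i^{\text T}\mathbb A f_j$, so lower‑triangularity says $\mathbb A f_j\perp\mathrm{span}(f_{j+1},\dots,f_n)$ for every $j$, i.e. $\mathbb A f_j\in P_j:=\ker B_{[j+1,n]}$ (the submatrix of rows $j+1,\dots,n$), a flag with $\dim P_j=j$. Because $\mathbb A=\Id+N$ with $N$ strictly upper triangular, this is a square inhomogeneous system for $N$. I would show its homogeneous part is injective — equivalently that $BNB^{\text T}$ upper triangular forces $N=0$ — by a downward recursion on columns, in which the nonvanishing of the corner minors $\delta_k$ (implicit in the ratios $\delta_k/\tilde\delta_k$ being defined) guarantees that each $P_j$ is transverse to the coordinate flag $\mathrm{span}(e_1,\dots,e_m)$, so that each successive entry of $N$ is forced. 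This yields a unique $\mathbb A\in\A_n$ with $B\mathbb A B^{\text T}$ upper triangular, which already gives the uniqueness asserted in the lemma.

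It remains to prove existence, namely that for this distinguished $\mathbb A$ the diagonal entries $(B\mathbb A B^{\text T})_{kk}$ all equal $1$. Here I would compute $(B\mathbb A B^{\text T})_{kk}=f_k\cdot(\mathbb A f_k)$ by Cramer's rule applied to the triangular system above: each solved quantity is a ratio of determinants of submatrices of $B$, and careful bookkeeping identifies the diagonal entry with a signed ratio of consecutive corner minors,
\be
(B\mathbb A B^{\text T})_{kk}=\pm\,\frac{\delta_{k-1}\,\tilde\delta_k}{\delta_k\,\tilde\delta_{k-1}},\qquad k=1,\dots,n ,
\ee
so that $\prod_k (B\mathbb A B^{\text T})_{kk}$ telescopes to $\delta_0\tilde\delta_n/(\delta_n\tilde\delta_0)=1/(\det B)^2=1$, consistent with $\det(B\mathbb A B^{\text T})=(\det B)^2$. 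Substituting $\delta_k=\tilde\delta_k$ for $1\le k\le n-1$ and the boundary values $\delta_0=\tilde\delta_n=1$, $\delta_n=\tilde\delta_0=\det B=1$ collapses every factor to $1$, so $B\mathbb A B^{\text T}\in\A_n$, completing existence.

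The main obstacle is the determinant identity in the existence step: extracting the diagonal entry of $B\mathbb A B^{\text T}$ for the solved $\mathbb A$ as a clean ratio of corner minors. This is a Cramer's‑rule / Desnanot--Jacobi (Dodgson condensation) type computation, and the one genuinely delicate point is tracking the signs coming from the antidiagonal matrix $S=\sum_i(-1)^iE_{i,n+1-i}$, so that the normalization appears exactly as $\delta_k/\tilde\delta_k=1$ rather than with a stray sign. As a cross‑check I would verify the formula directly for $n=2$, where the unique solution produces $(B\mathbb A B^{\text T})_{11}$ and $(B\mathbb A B^{\text T})_{22}$ as the two reciprocal minor ratios, and confirm agreement with the explicit expression $\mathbb A=T_1^{-1}S\widetilde T_2\widetilde T_1^{\text T}S$ of the preceding theorem whenever $B=T_2T_1$.
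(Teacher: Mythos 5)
Your proposal follows essentially the same route as the paper's proof: treat the vanishing of the lower-triangular part of $B\mathbb A B^{\text{T}}$ as a linear system determining the upper-triangular entries of $\mathbb A$ uniquely, then substitute into the diagonal and identify each diagonal entry as a signed ratio of consecutive corner minors --- your formula $\pm\,\delta_{k-1}\tilde\delta_k/(\delta_k\tilde\delta_{k-1})$ is, up to the index reversal $k\mapsto n+1-k$, exactly the paper's displayed identity $\tilde{\mathbb A}_{k,k}/\mathbb A_{k,k}=(-1)^{n+1}\bigl(\tilde\delta_{n-k}/\delta_{n-k}\bigr)\bigl(\delta_{n-k+1}/\tilde\delta_{n-k+1}\bigr)$. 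The sign you flag as the delicate point is precisely the factor $(-1)^{n+1}$ the paper makes explicit (its proof in fact concludes with the condition $\delta_k/\tilde\delta_k=(-1)^{(n-k)(n+1)}$ rather than $1$), so modulo that bookkeeping your argument coincides with, and fills in the Cramer's-rule details of, the paper's one-paragraph proof.
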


\begin{proof} Express variables $\mathbb A_{k,\ell}$  $1\le k<\ell \le n$ from the system of equations that the lower triangular part of $\tilde{\mathbb A}=B\mathbb A B^\text{T}$ is zero and substitute these expressions in diagonal elements of $\tilde{\mathbb A}$.  We obtain
$$
\dfrac{\tilde{\mathbb A}_{k,k}}{\mathbb A_{k,k}}=(-1)^{n+1}\left(\dfrac{ \tilde\delta_{n-k}}{\delta_{n-k}}\right)\left(\dfrac{\delta_{n-k+1}}{\tilde\delta_{n-k+1}}\right).
$$ 
Assumig unipotency condition $\widetilde{\mathbb A}_{k,k}={\mathbb A_{k,k}}=1$ these equations imply that
there exists a unique solution of normalized symplectic groupoid compatibility condition if $\delta_k/\tilde\delta_k=(-1)^{(n-k)(n+1)}\ \forall k\in[0,n]$.
\end{proof}

\begin{remark} Note that  functions $\det(B), \delta_k/\widetilde{\delta}_k$  provide complete collections of independent Casimir functions of the standard 
$R$-matrix Poisson-Lie bracket for $GL_n$.
\end{remark}

\begin{remark} It is easy to satisfy conditions of unipotency of $\mathbb A$ and $\delta_i/\widetilde\delta_i=1$ simultaneously (see Theorem~\ref{Casimirs}) in terms of network Fig.~\ref{quadrangle}.
Since the matrices $\mathbb A$ and $\widetilde{\mathbb A}$ are uniquely determined by the matrix $B$ in a general position, and by the condition of unipotency for the matrix $\mathbb A$, the above solution is, in fact, unique! If, in addition, we require the matrix 
$\widetilde{\mathbb A}$ to be unipotent, it imposes additional restrictions on Casimir elements of the matrix $B$, see \cite{Bondal}. We resolve these conditions in the next section.
\end{remark}

\begin{remark} The quantum version of transport matrices were considered in \cite{ChSh20}. Their entries are elements of quantum torus algebra. It is straightforward to generalize the construction of Theorem~\ref{A} to the upper-triangular matrices with entries in quantum torus algebra.
The quantum construction will be published in a separate paper.
\end{remark}

\subsection{Semiclassical symplectic groupoid algebra}



One of the most attractive features of the symplectic groupoid construction is that elements of any admissible pair $(B,\mathbb A)$ satisfy closed Poisson algebra. In \cite{ChM4} it was shown that the Poisson--Lie relations on the matrix $B$ induce the reflection equation relations on $\mathbb A$ and $\widetilde {\mathbb A}$ as well as all other commutation relations between $B$ and $\mathbb A$ and $\widetilde {\mathbb A}$; in particular, elements of $\mathbb A$ and $\widetilde {\mathbb A}$ mutually Poisson commute. The proof in \cite{ChM4} was straightforward, but technically cumbersome: it was based on expressing elements of $\mathbb A$ in terms of $B$ provided that $B\mathbb AB^{\text{T}}$ is unipotent and on deriving induced Poisson relations.  We now derive all these algebraic relations from Poisson relations enjoyed by transport matrices.

\begin{theorem}\label{thm:reflectionPB}
The trigonometric Poisson-Lie bracket on $SL_n$ induces \emph{reflection} Poisson bracket on $\A_n$ and other Poisson relations between elements of $B$, $\mathbb A$, and $\widetilde {\mathbb A}$. 
\end{theorem}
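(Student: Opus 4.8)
The plan is to push every computation down to the level of the six transport matrices $T_1,\widetilde T_1,\overline T_1,T_2,\widetilde T_2,\overline T_2$, whose Poisson relations are completely known: each satisfies the Lie--Poisson relation (\ref{LP}), pairs lying in one triangle obey the cyclic cross-relations displayed just before (\ref{gr1})--(\ref{gr2}), and any matrix of the right triangle Poisson-commutes with any matrix of the left triangle. Since (\ref{B}), (\ref{A}), (\ref{t-A}) express $B$, $\mathbb A$, and $\widetilde{\mathbb A}$ as products of these matrices together with the constant matrix $S$ (which Poisson-commutes with everything), each of the brackets $\{\sheet{1}{B}\ocomma\sheet{2}{B}\}$, $\{\sheet{1}{B}\ocomma\sheet{2}{\mathbb A}\}$, $\{\sheet{1}{\mathbb A}\ocomma\sheet{2}{\mathbb A}\}$, and so on can be expanded by the tensor Leibniz rule $\{\sheet{1}{(XY)}\ocomma\sheet{2}{Z}\}=\sheet{1}{X}\{\sheet{1}{Y}\ocomma\sheet{2}{Z}\}+\{\sheet{1}{X}\ocomma\sheet{2}{Z}\}\sheet{1}{Y}$ into a sum of the elementary relations and then resummed.

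First I would record the auxiliary brackets. From (\ref{LP}) the bracket of an inverse follows from $\{\sheet{1}{T^{-1}}\ocomma\sheet{2}{X}\}=-\sheet{1}{T^{-1}}\{\sheet{1}{T}\ocomma\sheet{2}{X}\}\sheet{1}{T^{-1}}$; transposing a slot exchanges the two forms of (\ref{LP}), trading $r$ for $r^{\text{T}}$, while conjugation by $S$, already visible in the cross-relations, produces $SrS$. As a warm-up that also fixes all sign conventions I would check directly that $B=T_2T_1$ satisfies the trigonometric Poisson--Lie relation: expanding $\{\sheet{1}{B}\ocomma\sheet{2}{B}\}$, the two cross terms vanish because $T_1$ and $T_2$ commute, and the surviving terms collapse, after cancellation of the $\sheet{1}{T_2}\sheet{2}{T_2}\,r\,\sheet{1}{T_1}\sheet{2}{T_1}$ contributions, to $-r\,\sheet{1}{B}\sheet{2}{B}+\sheet{1}{B}\sheet{2}{B}\,r$. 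This reproduces (\ref{LP}) for $B$ and confirms that $\B_n$ is a Poisson--Lie submanifold.

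The core of the proof is then a short list of bracket computations. For $\{\sheet{1}{\mathbb A}\ocomma\sheet{2}{\mathbb A}\}$ I would substitute $\mathbb A=T_1^{-1}S\widetilde T_2\widetilde T_1^{\text{T}}S$, expand by Leibniz, and feed in (\ref{LP}) for $\widetilde T_2$ together with the within-triangle relation for the pair $(T_1,\widetilde T_1)$; the conjugations by $S$ and the transpose reorganize the result into the reflection bracket (\ref{A-A}). The computation for $\widetilde{\mathbb A}$ from (\ref{t-A}) is structurally the same and yields (\ref{At-At}), with opposite overall sign in accordance with $(p_1)_*(\PP_s)=-(p_2)_*(\PP_s)$; the mixed brackets $\{\sheet{1}{B}\ocomma\sheet{2}{\mathbb A}\}$ and $\{\sheet{1}{B}\ocomma\sheet{2}{\widetilde{\mathbb A}}\}$ drop out of the same expansions. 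The one genuinely delicate relation is $\{\sheet{1}{\mathbb A}\ocomma\sheet{2}{\widetilde{\mathbb A}}\}=0$: since $\mathbb A$ is built from $T_1,\widetilde T_1,\widetilde T_2$ and $\widetilde{\mathbb A}$ from $\overline T_1,\overline T_2,T_2$, all inter-triangle brackets vanish and only the same-triangle cross-terms $(\widetilde T_1,\overline T_1)$ and $(\widetilde T_2,T_2)$, $(\widetilde T_2,\overline T_2)$ survive and must cancel. Here I would first eliminate $\overline T_1,\overline T_2$ through the groupoid path relations (\ref{gr1}), (\ref{gr2}), e.g. $S\overline T_1 S=\widetilde T_1^{-1}S^{-1}T_1^{-1}$ and its analogue, so that $\widetilde{\mathbb A}$ too is written as a function of $T_1,\widetilde T_1,T_2,\widetilde T_2$ alone and the vanishing reduces to a self-contained $r$-matrix identity.

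The main obstacle is not conceptual but combinatorial: the bookkeeping of which $r$-matrix ($r$, $r^{\text{T}}$, or $SrS$) is produced by each inverse, transpose, and conjugation by $S$, together with the $\pm$ signs coming from $S^2=\pm\Id$ and $S^{\text{T}}=\pm S$ for even and odd $n$. The hard part will be steering the cancellations in $\{\sheet{1}{\mathbb A}\ocomma\sheet{2}{\widetilde{\mathbb A}}\}=0$, where the surviving same-triangle cross-terms collapse only after the groupoid constraints are used. A structural guide throughout is that every emerging bracket must be of reflection-equation type, because $\mathbb A=T_1^{-1}(\cdots)\widetilde T_1^{\text{T}}$ has precisely the ``$T^{-1}KT^{\text{T}}$'' shape that turns Sklyanin/RTT generators into generators of a reflection algebra.
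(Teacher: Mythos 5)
Your strategy is essentially the paper's own proof: the paper establishes Theorem~\ref{thm:reflectionPB} exactly by tensor--Leibniz expansion of $\{\sheet{1}{B}\ocomma\sheet{2}{B}\}$, $\{\sheet{1}{B}\ocomma\sheet{2}{\mathbb A}\}$, $\{\sheet{1}{B}\ocomma\sheet{2}{\widetilde{\mathbb A}}\}$, $\{\sheet{1}{\mathbb A}\ocomma\sheet{2}{\mathbb A}\}$, $\{\sheet{1}{\widetilde{\mathbb A}}\ocomma\sheet{2}{\widetilde{\mathbb A}}\}$, and $\{\sheet{1}{\mathbb A}\ocomma\sheet{2}{\widetilde{\mathbb A}}\}$ over the factorizations (\ref{B}), (\ref{A}), (\ref{t-A}), feeding in (\ref{LP}), the within-triangle cross-relations, and the vanishing of left--right triangle brackets, with $S$-conjugation identities such as $\sheet{1}S\sheet{2}S\,r=r^{\text{T}}\sheet{1}S\sheet{2}S$ handling the signs. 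The one place you deviate is the key relation (\ref{A-At}): you propose to first eliminate $\overline T_1,\overline T_2$ via the groupoid relations (\ref{gr1}), (\ref{gr2}) so that everything is expressed through $T_1,\widetilde T_1,T_2,\widetilde T_2$, whereas the paper keeps $\widetilde{\mathbb A}$ in the form (\ref{t-A}) and uses the cyclically permuted cross-relations for the pairs $(T_1,\overline T_1)$, $(\widetilde T_1,\overline T_1)$, $(\widetilde T_2,\overline T_2)$, $(\widetilde T_2,T_2)$ directly, whereupon the four resulting terms cancel pairwise (first against fourth, second against third). Your elimination route is legitimate (the path relations hold identically on the FGS phase space, and your identity $S\overline T_1S=\widetilde T_1^{-1}S^{-1}T_1^{-1}$ is correct) and reduces the input to a smaller set of elementary brackets, at the cost of longer words in the $T$'s; the paper's version buys an almost immediate two-line cancellation. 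Two bookkeeping slips to fix, neither fatal: your inventory of possible $r$-matrices ($r$, $r^{\text{T}}$, $SrS$) must be enlarged by the partial transposes $r^{t_1}$, $r^{t_2}$, which arise whenever a single tensor slot is transposed and which appear irreducibly in the final answers (\ref{A-A}), (\ref{At-At}) and in the $B$--$\mathbb A$, $B$--$\widetilde{\mathbb A}$ relations; and your list of surviving same-triangle cross-terms in $\{\sheet{1}{\mathbb A}\ocomma\sheet{2}{\widetilde{\mathbb A}}\}$ omits the pair $(T_1,\overline T_1)$ coming from $\{\sheet{1}{T_1^{-1}}\ocomma\bigl[\sheet{2}{\overline T_1^{\text{T}}}\bigr]^{-1}\}$ --- there are four such terms, not three, which is precisely why the paper's cancellation is pairwise.
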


\subsubsection{$B-B$ relations}
That $B=T_2 T_1$ itself satisfies the Poisson--Lie algebra follows immediately from that each matrix $T_1$ and $T_2$ satisfies the Poisson--Lie bracket and $T_1$ Poisson commute with $T_2$:
\begin{align}
\bigl\{ \sheet{1} B \ocomma \sheet{2}{B}\bigr\}=&\bigl\{ \sheet{1} T_2 \ocomma \sheet{2}T_2\bigr\}  \sheet{1} {T_1}\sheet{2} T_1 +  \sheet{1} T_2\sheet{2} T_2 \bigl\{ \sheet{1} T_1 \ocomma \sheet{2}T_1\bigr\}=r   \sheet{1} T_2 \sheet{2}T_2  \sheet{1} T_1\sheet{2} T_1 -   \sheet{1} T_2 \sheet{2}T_2  r \sheet{1} T_1\sheet{2} T_1 +   \sheet{1} T_2 \sheet{2}T_2  r \sheet{1} T_1\sheet{2} T_1 -  \sheet{1} T_2 \sheet{2}T_2  \sheet{1} T_1\sheet{2} T_1 r\nonumber\\
&=  r \sheet{1} T_2 \sheet{2}T_2  \sheet{1} T_1\sheet{2} T_1 -   \sheet{1} T_2 \sheet{2}T_2  \sheet{1} T_1\sheet{2} T_1 r= r   \sheet{1} B \sheet{2}B - \sheet{1} B \sheet{2}B r. 
\end{align}

\subsubsection{$B-\mathbb A$ relations}
Here we have
\begin{align}
\bigl\{ \sheet{1} B \ocomma \sheet{2}{\mathbb A}\bigr\}=
& \sheet{2} {T_1^{-1}}\sheet{2} S \bigl\{ \sheet{1} T_2 \ocomma \sheet{2}{\widetilde T_2}\bigr\} \sheet{2} {\widetilde T_1^\text{T}}\sheet{2} S\sheet{1}T_1
+\sheet{1} T_2  \bigl\{ \sheet{1} T_1 \ocomma \sheet{2} {T_1^{-1}} \bigr\} \sheet{2} S \sheet{2}{\widetilde T_2} \sheet{2} {\widetilde T_1^\text{T}}\sheet{2} S
+\sheet{1} T_2  \sheet{2} {T_1^{-1}} \sheet{2} S \sheet{2}{\widetilde T_2} \bigl\{ \sheet{1} T_1 \ocomma \sheet{2} {\widetilde T_1^\text{T}}  \bigr\}  \sheet{2} S\nonumber\\
=& \sheet{2}{ T_1^{-1}}\sheet{2} S \bigl( \sheet{1} T_2 \sheet{2}S r^\text{T}\sheet{2}S \sheet{2}{\widetilde T_2}\bigr) \sheet{2} {\widetilde T_1^\text{T}}\sheet{2} S\sheet{1}T_1
+\sheet{1} T_2  \bigl( -\sheet{2} {T_1^{-1}} r^\text{T} \sheet{1} T_1 +  \sheet{1} T_1 r^\text{T} \sheet{2} T_1^{-1} \bigr) \sheet{2} S \sheet{2}{\widetilde T_2} \sheet{2} {\widetilde T_1^\text{T}}\sheet{2} S
+\sheet{1} T_2  \sheet{2} {T_1^{-1}} \sheet{2} S \sheet{2}{\widetilde T_2} \bigl( - \sheet{1} T_1  \sheet{2} {\widetilde T_1^\text{T} }\sheet{2}S r^{t_2}\sheet{2}S  \bigr) \sheet{2} S\nonumber\\
=&\sheet{1} T_2   \sheet{1} T_1 r^\text{T} \sheet{2} {T_1^{-1}} \sheet{2} S \sheet{2}{\widetilde T_2} \sheet{2} {\widetilde T_1^\text{T}}\sheet{2} S
-\sheet{1} T_2  \sheet{2} {T_1^{-1}} \sheet{2} S \sheet{2}{\widetilde T_2} \sheet{1} T_1  \sheet{2} {\widetilde T_1^\text{T}} \sheet{2}S r^{t_2}\sheet{2}S \sheet{2} S\nonumber\\
=&\sheet{1}B r^\text{T} \sheet{2}{\mathbb A}- \sheet{1}B  \sheet{2}{\mathbb A} r^{t_2},
\end{align}
where $r^{t_2}$ is a partially transposed $r$-matrix.

\subsubsection{$B-\widetilde{\mathbb A}$ relations}
\begin{align}
\bigl\{ \sheet{1} B \ocomma \sheet{2}{\widetilde{\mathbb A}}\bigr\}=
& \sheet{2} S  \bigl\{ \sheet{1} T_2 \ocomma \sheet{2}{\overline T_2^{-1}}\bigr\} \bigl[\sheet{2} {\overline T_1^\text{T}}\bigr]^{-1}\sheet{2} S\sheet{2}{T_2^\text{T}}\sheet{1}T_1
+\sheet{2} S \sheet{2}{\overline T_2^{-1}}  \bigl[\sheet{2} {\overline T_1^\text{T}}\bigr]^{-1}  \sheet{2} S \bigl\{ \sheet{1} T_2 \ocomma \sheet{2}{T_2^\text{T}} \bigr\} \sheet{1}T_1
+ \sheet{1} T_2 \sheet{2} S \sheet{2}{\overline T_2^{-1}}   \bigl\{  \sheet{1}T_1  \ocomma  \bigl[\sheet{2} {\overline T_1^\text{T}}\bigr]^{-1} \bigr\}  \sheet{2} S  \sheet{2}{T_2^\text{T}}
\nonumber\\
=& \sheet{2} S  \bigl(\sheet{1}S r \sheet{1}S \sheet{1}T_2 \sheet{2}{\overline T_2^{-1}}\bigr) \bigl[\sheet{2} {\overline T_1^\text{T}}\bigr]^{-1}\sheet{2} S\sheet{2}{T_2^\text{T}}\sheet{1}T_1
+\sheet{2} S \sheet{2}{\overline T_2^{-1}}  \bigl[\sheet{2} {\overline T_1^\text{T}}\bigr]^{-1}  \sheet{2} S \bigl( -  \sheet{2}{T_2^\text{T}} r^{t_2} \sheet{1} T_2 + \sheet{1} T_2 r^{t_2} \sheet{2}{T_2^\text{T}} \bigr) \sheet{1}T_1
- \sheet{1} T_2 \sheet{2} S \sheet{2}{\overline T_2^{-1}}   \bigl(  \bigl[\sheet{2} {\overline T_1^\text{T}}\bigr]^{-1} \sheet{1} S r^{t_1}\sheet{1} S  \sheet{1}T_1 \bigr)  \sheet{2} S  \sheet{2}{T_2^\text{T}}
\nonumber\\
=&r^\text{T} \sheet{2}S \sheet{1}T_2 \sheet{2}{\overline T_2^{-1}} \bigl[\sheet{2} {\overline T_1^\text{T}}\bigr]^{-1}\sheet{2} S\sheet{2}{T_2^\text{T}}\sheet{1}T_1
-\sheet{2} S \sheet{2}{\overline T_2^{-1}}  \bigl[\sheet{2} {\overline T_1^\text{T}}\bigr]^{-1}  \sheet{2} S \sheet{2}{T_2^\text{T}} r^{t_2} \sheet{1} T_2  \sheet{1}T_1
\nonumber\\
=&r^\text{T}\sheet{1}B  \sheet{2}{\widetilde{\mathbb A}}-  \sheet{2}{\widetilde{\mathbb A}} r^{t_2}\sheet{1}B,
\end{align}
where we have used that $r^{t_1}\sheet{1} S\sheet{2} S=\sheet{1} S\sheet{2} S r^{t_2}$ and $\sheet{1} S\sheet{2} S r= r^{\text{T}}\sheet{1} S\sheet{2} S$. 

\subsubsection{$\mathbb A-\widetilde{\mathbb A}$ relations}
\begin{align}
\bigl\{ \sheet{1}{\mathbb A} \ocomma \sheet{2}{\widetilde{\mathbb A}}\bigr\}=
& \sheet{2} S \sheet{2}{{\overline T}_2^{-1}}  \bigl\{ \sheet{1}{ T_1^{-1}} \ocomma \bigl[\sheet{2} {\overline T_1^\text{T}}\bigr]^{-1}\bigr\} \sheet{2} S\sheet{2}{T_2^\text{T}}\sheet{1} S \sheet{1}{\widetilde{T_2}}\sheet{1}{\widetilde{T_1}^\text{T}}\sheet{1} S
+\sheet{2} S \sheet{2}{{\overline T}_2^{-1}} \sheet{1}{ T_1^{-1}} \sheet{1} S \sheet{1}{\widetilde{T_2}} \bigl\{ \sheet{1}{\widetilde{T_1}^\text{T}}  \ocomma \bigl[\sheet{2} {\overline T_1^\text{T}}\bigr]^{-1}\bigr\} \sheet{2} S\sheet{2}{T_2^\text{T}} \sheet{1} S\nonumber\\
&+ \sheet{2} S  \sheet{1}{ T_1^{-1}} \sheet{1} S  \bigl\{ \sheet{1}{\widetilde{T_2}} \ocomma \sheet{2}{{\overline T}_2^{-1}}  \bigr\} \bigl[\sheet{2} {\overline T_1^\text{T}}\bigr]^{-1} \sheet{2} S\sheet{2}{T_2^\text{T}} \sheet{1}{\widetilde{T_1}^\text{T}}\sheet{1} S
+ \sheet{2} S \sheet{2}{{\overline T}_2^{-1}} \bigl[\sheet{2} {\overline T_1^\text{T}}\bigr]^{-1} \sheet{2} S  \sheet{1}{ T_1^{-1}} \sheet{1} S  \bigl\{ \sheet{1}{\widetilde{T_2}} \ocomma  \sheet{2}{T_2^\text{T}}   \bigr\}  \sheet{1}{\widetilde{T_1}^\text{T}}\sheet{1} S\nonumber\\
=& \sheet{2} S \sheet{2}{{\overline T}_2^{-1}}  \bigl(  \sheet{1}{ T_1^{-1}} \bigl[ \sheet{2} {\overline T_1^\text{T}}\bigr]^{-1} \sheet{1}S r^{t_1}\sheet{1}S\bigr) \sheet{2} S\sheet{2}{T_2^\text{T}}\sheet{1} S \sheet{1}{\widetilde{T_2}}\sheet{1}{\widetilde{T_1}^\text{T}}\sheet{1} S
+\sheet{2} S \sheet{2}{{\overline T}_2^{-1}} \sheet{1}{ T_1^{-1}} \sheet{1} S \sheet{1}{\widetilde{T_2}} \bigl( \sheet{2}S r^{\text{T}}\sheet{2}S \sheet{1}{\widetilde{T_1}^\text{T}}  \bigl[\sheet{2} {\overline T_1^\text{T}}\bigr]^{-1}\bigr) \sheet{2} S\sheet{2}{T_2^\text{T}} \sheet{1} S\nonumber\\
&+ \sheet{2} S  \sheet{1}{ T_1^{-1}} \sheet{1} S  \bigl( -\sheet{1}{\widetilde{T_2}} \sheet{2}{{\overline T}_2^{-1}} \sheet{2}S r^{\text{T}}\sheet{2}S \bigr) \bigl[\sheet{2} {\overline T_1^\text{T}}\bigr]^{-1} \sheet{2} S\sheet{2}{T_2^\text{T}} \sheet{1}{\widetilde{T_1}^\text{T}}\sheet{1} S
+ \sheet{2} S \sheet{2}{{\overline T}_2^{-1}} \bigl[\sheet{2} {\overline T_1^\text{T}}\bigr]^{-1} \sheet{2} S  \sheet{1}{ T_1^{-1}} \sheet{1} S  \bigl( -\sheet{1}S r^{t_2}\sheet{1}S \sheet{1}{\widetilde{T_2}} \sheet{2}{T_2^\text{T}}   \bigr)  \sheet{1}{\widetilde{T_1}^\text{T}}\sheet{1} S.\nonumber
\end{align}
In this expression, the first term is canceled with the fourth term and the second term is canceled with the third term, so we obtain that
\be\label{A-At}
\bigl\{ \sheet{1}{\mathbb A} \ocomma \sheet{2}{\widetilde{\mathbb A}}\bigr\}=0.
\ee

\subsubsection{$\mathbb A-{\mathbb A}$ relations}
\begin{align}
\bigl\{ \sheet{1}{\mathbb A} \ocomma \sheet{2}{\mathbb A}\bigr\}=
&   \bigl\{ \sheet{1}{ T_1^{-1}} \ocomma \sheet{2}{ T_1^{-1}} \bigr\}  \sheet{1} S\sheet{1}{\widetilde T_2} \sheet{1}{\widetilde {T_1^\text{T}}}\sheet{1} S \sheet{2} S\sheet{2}{\widetilde T_2} \sheet{2}{\widetilde {T_1^\text{T}}}\sheet{2} S 
+\sheet{1}{ T_1^{-1}} \sheet{1} S \sheet{2}{ T_1^{-1}} \sheet{2} S \bigl\{ \sheet{1}{\widetilde T_2} \ocomma \sheet{2}{\widetilde T_2} \bigr\}  \sheet{1}{\widetilde {T_1^\text{T}}}\sheet{1} S  \sheet{2}{\widetilde {T_1^\text{T}}}\sheet{2} S 
+\sheet{1}{ T_1^{-1}} \sheet{1} S \sheet{1}{\widetilde T_2}  \sheet{2}{ T_1^{-1}} \sheet{2} S \sheet{2}{\widetilde T_2}  \bigl\{ \sheet{1}{\widetilde {T_1^\text{T}}} \ocomma   \sheet{2}{\widetilde {T_1^\text{T}}} \bigr\}  \sheet{1} S \sheet{2} S \nonumber\\
& +\sheet{1}{ T_1^{-1}} \sheet{1} S  \sheet{1}{\widetilde T_2}  \bigl\{ \sheet{1}{\widetilde {T_1^\text{T}}} \ocomma \sheet{2}{ T_1^{-1}}  \bigr\}  \sheet{1} S  \sheet{2} S \sheet{2}{\widetilde T_2} \sheet{2}{\widetilde {T_1^\text{T}}}\sheet{2} S 
+\sheet{2}{ T_1^{-1}} \sheet{2} S  \sheet{2}{\widetilde T_2}  \bigl\{ \sheet{1}{ T_1^{-1}}  \ocomma  \sheet{2}{\widetilde {T_1^\text{T}}} \bigr\}  \sheet{2} S  \sheet{1} S \sheet{1}{\widetilde T_2} \sheet{1}{\widetilde {T_1^\text{T}}}\sheet{1} S \nonumber\\
=&   \bigl( r\sheet{1}{ T_1^{-1}} \sheet{2}{ T_1^{-1}}- \sheet{1}{ T_1^{-1}} \sheet{2}{ T_1^{-1}}r \bigr)  \sheet{1} S\sheet{1}{\widetilde T_2} \sheet{1}{\widetilde {T_1^\text{T}}}\sheet{1} S \sheet{2} S\sheet{2}{\widetilde T_2} \sheet{2}{\widetilde {T_1^\text{T}}}\sheet{2} S 
+\sheet{1}{ T_1^{-1}} \sheet{1} S \sheet{2}{ T_1^{-1}} \sheet{2} S \bigl(r^\text{T} \sheet{1}{\widetilde T_2} \sheet{2}{\widetilde T_2} -\sheet{1}{\widetilde T_2} \sheet{2}{\widetilde T_2}r^\text{T} \bigr)  \sheet{1}{\widetilde {T_1^\text{T}}}\sheet{1} S  \sheet{2}{\widetilde {T_1^\text{T}}}\sheet{2} S \nonumber\\
&\qquad\qquad+\sheet{1}{ T_1^{-1}} \sheet{1} S \sheet{1}{\widetilde T_2}  \sheet{2}{ T_1^{-1}} \sheet{2} S \sheet{2}{\widetilde T_2}  \bigl( r^\text{T}\sheet{1}{\widetilde {T_1^\text{T}}}  \sheet{2}{\widetilde {T_1^\text{T}}} - \sheet{1}{\widetilde {T_1^\text{T}}}  \sheet{2}{\widetilde {T_1^\text{T}}}r^\text{T} \bigr)  \sheet{1} S \sheet{2} S \nonumber\\
& +\sheet{1}{ T_1^{-1}} \sheet{1} S  \sheet{1}{\widetilde T_2}  \bigl( -\sheet{1}{\widetilde {T_1^\text{T}}} \sheet{1}S r^{t_2}\sheet{1}S \sheet{2}{ T_1^{-1}}  \bigr)  \sheet{1} S  \sheet{2} S \sheet{2}{\widetilde T_2} \sheet{2}{\widetilde {T_1^\text{T}}}\sheet{2} S 
+\sheet{2}{ T_1^{-1}} \sheet{2} S  \sheet{2}{\widetilde T_2}  \bigl(  \sheet{2}{\widetilde {T_1^\text{T}}} \sheet{2}S r^{t_2}\sheet{2}S  \sheet{1}{ T_1^{-1}}   \bigr) \sheet{2} S  \sheet{1} S \sheet{1}{\widetilde T_2} \sheet{1}{\widetilde {T_1^\text{T}}}\sheet{1} S \nonumber\\
=&r \sheet{1}{\mathbb A}\sheet{2}{\mathbb A}-\sheet{1}{\mathbb A}\sheet{2}{\mathbb A}r- \sheet{1}{\mathbb A} r^{t_2}\sheet{2}{\mathbb A} + \sheet{2}{\mathbb A} r^{t_2} \sheet{1}{\mathbb A}=-r^{\text{T}}\sheet{1}{\mathbb A}\sheet{2}{\mathbb A} +\sheet{1}{\mathbb A}\sheet{2}{\mathbb A}r^\text{T}- \sheet{1}{\mathbb A} r^{t_2}\sheet{2}{\mathbb A} + \sheet{2}{\mathbb A} r^{t_2} \sheet{1}{\mathbb A},\label{A-A}
\end{align}
that is, we have obtained the semiclassical {\bf reflection equation}.

\subsubsection{$\widetilde{\mathbb A}-\widetilde{\mathbb A}$ relations}
Analogously to the previous relation, we obtain
\be\label{At-At}
\bigl\{ \sheet{1}{\widetilde{\mathbb A}} \ocomma \sheet{2}{\widetilde{\mathbb A}}\bigr\}=-r \sheet{1}{\widetilde{\mathbb A}}\sheet{2}{\widetilde{\mathbb A}} +\sheet{1}{\widetilde{\mathbb A}}\sheet{2}{\widetilde{\mathbb A}}r+\sheet{1}{\widetilde{\mathbb A}} r^{t_2}\sheet{2}{\widetilde{\mathbb A}} - \sheet{2}{\widetilde{\mathbb A}} r^{t_2} \sheet{1}{\widetilde{\mathbb A}},
\ee
so the mapping $\mathbb A\mapsto \widetilde{\mathbb A}$ is anti-Poisson.


\section{Casimir elements}

In this section we will describe, first, the Casimir elements (or, using a widely accepted jargon, Casimirs) of the Goldman Poisson bracket collection on the moduli space of pinning $\PP_{SL_n,\square}$  and, then, of the reflection bracket on $\A_n$. Since FGS-parameters $Z_{abc}$ are log-canonical coordinates for both brackets, it is convenient to use these parameters for computing Casimirs in both cases. Indeed, the bracket becomes constant in $\log Z_{abc}$, and the equations for Casimirs become linear. In particular, it implies that the generators of the algebra of Casimir elements can be expressed as Laurent monomials in $Z_{abc}$.

Below we show the pictorial representation of monomials $C_i$ and $\tilde C_i$ that describe Casimirs of both brackets. One picture describes one monomial. Each vertex of a quiver below is assigned a corresponding FGS-parameter. Blue arrows of the quiver describe the Poisson-Lie bracket
between parameters assigned to endpoints of the arrow.
Red numbers at the vertices mean the exponent of the corresponding variable in the monomial $C_i$ or $\tilde C_i$ as indicated in the caption; unnumbered vertices do not contribute.

\begin{figure}[H]
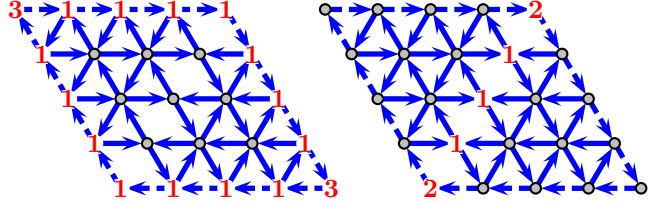

\begin{center}

\end{center}
\caption{\small
Two exceptional Casimirs $C_0$ and $C_1$ (from left to right) for the full-rank quiver $Q_\square$ for $\PP_{GL_4}$.
}
\label{fi:Casimirs-3}
\end{figure}

The 
quiver $Q_\square$ associated to the moduli space of pinnings $\PP_{GL_n,\square}$ 
therefore contains $(n+1)^2$ cluster variables.  The vertices of the quiver form a skew (rhombus-shaped) lattice. Let's label vertices by pairs $(i,j)\in[0,n]\times [0,n]$, $i$ increasing from left to right, $j$ from bottom to top, and the corresponding cluster variable by $K_{ij}$. 

Note that not all variables $K_{ij}$ of $Q_\square$ are used in expression of transport matrix $B$. Namely, all variables in the bottom row do not contribute to $B$. 
Also, only the product of all variables in the top row enters as a factor in the expression for any entry of matrix $B$. The corresponding quiver is shown on Fig.~\ref{fi:Casimirs-4}.

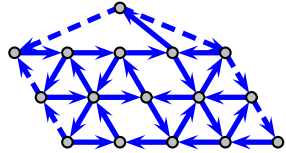
\begin{figure}[H]
\begin{center}
\begin{pspicture}(-2,-0.7)(2,1.2){\psset{unit=0.7}
\newcommand{\ODIN}{%
{\psset{unit=1}
\put(0,0){\pscircle[fillstyle=solid,fillcolor=white,linecolor=white]{.15}}
\put(0,0){\makebox(0,0)[cc]{\hbox{\tcw{\large$\mathbf 1$}}}}
\put(0,0){\makebox(0,0)[cc]{\hbox{\tcr{$\mathbf 1$}}}}
}}
\newcommand{\DVA}{%
{\psset{unit=1}
\put(0,0){\pscircle[fillstyle=solid,fillcolor=white,linecolor=white]{.15}}
\put(0,0){\makebox(0,0)[cc]{\hbox{\tcw{\large$\mathbf 2$}}}}
\put(0,0){\makebox(0,0)[cc]{\hbox{\tcr{$\mathbf 2$}}}}
}}
\newcommand{\TRI}{%
{\psset{unit=1}
\put(0,0){\pscircle[fillstyle=solid,fillcolor=white,linecolor=white]{.15}}
\put(0,0){\makebox(0,0)[cc]{\hbox{\tcw{\large$\mathbf 3$}}}}
\put(0,0){\makebox(0,0)[cc]{\hbox{\tcr{$\mathbf 3$}}}}
}}
\multiput(-2.5,0.85)(1,0){3}{\psline[linecolor=blue,linewidth=2pt]{->}(0.1,0)(.9,0)}
\multiput(-2,0)(1,0){2}{\psline[linecolor=blue,linewidth=2pt]{->}(0.1,0)(.9,0)}
\multiput(-1.5,-0.85)(1,0){1}{\psline[linecolor=blue,linewidth=2pt]{->}(0.1,0)(.9,0)}
\multiput(2.5,-0.85)(-1,0){3}{\psline[linecolor=blue,linewidth=2pt]{->}(-0.1,0)(-.9,0)}
\multiput(2,-0)(-1,0){2}{\psline[linecolor=blue,linewidth=2pt]{->}(-0.1,0)(-.9,0)}
\multiput(1.5,0.85)(-1,0){1}{\psline[linecolor=blue,linewidth=2pt]{->}(-0.1,0)(-.9,0)}
\multiput(1.5,0.85)(0.5,-0.85){2}{\psline[linecolor=blue,linestyle=dashed,linewidth=2pt]{->}(0.05,-0.085)(.45,-0.765)}
\multiput(.5,.85)(0.5,-0.85){2}{\psline[linecolor=blue,linewidth=2pt]{->}(0.05,-0.085)(.45,-0.765)}
\multiput(0,0)(0.5,-0.85){1}{\psline[linecolor=blue,linewidth=2pt]{->}(0.05,-0.085)(.45,-0.765)}
\multiput(-1.5,-0.85)(-0.5,0.85){2}{\psline[linecolor=blue,linestyle=dashed,linewidth=2pt]{->}(-0.05,0.085)(-.45,0.765)}
\multiput(-0.5,-.85)(-0.5,0.85){2}{\psline[linecolor=blue,linewidth=2pt]{->}(-0.05,0.085)(-.45,0.765)}
\multiput(0,0)(-0.5,0.85){1}{\psline[linecolor=blue,linewidth=2pt]{->}(-0.05,0.085)(-.45,0.765)}
%
\multiput(0.5,-0.85)(0.5,0.85){2}{\psline[linecolor=blue,linewidth=2pt]{->}(0.05,0.085)(.45,0.765)}
\multiput(1.5,-0.85)(0.5,0.85){1}{\psline[linecolor=blue,linewidth=2pt]{->}(0.05,0.085)(.45,0.765)}
\multiput(-0.5,0.85)(-0.5,-0.85){2}{\psline[linecolor=blue,linewidth=2pt]{->}(-0.05,-0.085)(-.45,-0.765)}
\multiput(-1.5,0.85)(-0.5,-0.85){1}{\psline[linecolor=blue,linewidth=2pt]{->}(-0.05,-0.085)(-.45,-0.765)}
%
%
%
%
%
\put(-2.5,0.95){\psline[linecolor=blue,linestyle=dashed,linewidth=2pt]{<-}(0.05,-0.085)(2.,0.765)}
\put(1.5,0.95){\psline[linecolor=blue,linestyle=dashed,linewidth=2pt]{<-}(0.05,-0.085)(-2.,0.765)}
\put(0.5,0.95){\psline[linecolor=blue,linewidth=2pt]{->}(0.05,-0.085)(-1.,0.765)}
\multiput(-2.5,0.85)(0.5,-0.85){3}{
\multiput(0,0)(1,0){5}{\pscircle[fillstyle=solid,fillcolor=lightgray]{.1}}}
\put(-0.5,1.7){\pscircle[fillstyle=solid,fillcolor=lightgray]{.1}}
}
\end{pspicture}
\end{center}
\caption{\small
The quiver of factorization parameters for matrix $B\in GL_n$, $n=4$ (the quiver for $B$-system). The cluster variable at the top vertex is the product $\prod_{j=0}^n K_{n,j}$. In the remaining part of the quiver the vertex $(i,j)\in[1,n-1]\times [0,n]$ carries the variable $K_{i,j}$.
}
\label{fi:Casimirs-4}
\end{figure}

\begin{theorem}\label{Casimirs}
The full list of Casimir elements is as follows:
\begin{itemize}
\item[(i)] 
$Q_\square$ (see, Figures~\ref{fi:Casimirs-1},\ref{fi:Casimirs-2},\ref{fi:Casimirs-3}) has exactly $n+1$ Casimirs: $C_0$, $C_1$ and $\widetilde C_iC_i$ for $i=2,\dots,n$.
\item[(ii)] the quiver for the transport matrix $B$ normalized to the unit determinant ($\det B=1$) contains exactly $n-1$ Casimirs $\widetilde C_i/C_i$. 
\item[(iii)] the quiver for $\mathcal A_n$ obtained from the quiver $Q_\square$ with Moebius-like amalgamation of variables in the upper and lower rows contains $n(n+1)$ cluster variables and has exactly $2n$ Casimirs: $C_0$, $C_1$ and all $\widetilde C_i$ and $C_i$ for $i=2,\dots,n$.
\end{itemize}
\end{theorem}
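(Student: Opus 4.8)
The plan is to use the reduction already recorded before the statement: in the log-canonical coordinates $Z_{abc}$ (equivalently the $K_{ij}$) the bracket is $\{Z_a,Z_b\}=\varepsilon_{ab}Z_aZ_b$ with a constant integer skew-symmetric matrix $\varepsilon$ read off from the quiver, so a Laurent monomial $M=\prod_a Z_a^{m_a}$ is Casimir if and only if $\sum_a \varepsilon_{ac}m_a=0$ for every vertex $c$, i.e.\ the exponent vector $m$ lies in $\ker\varepsilon$. Consequently the number of independent Casimirs equals the corank $\dim\ker\varepsilon=(\#\text{vertices})-\operatorname{rank}\varepsilon$, and exhibiting explicit Casimirs amounts to producing a basis of $\ker\varepsilon$. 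The whole theorem thus becomes one linear-algebra computation on each of the three quivers, and the three assertions are three corank evaluations.

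First I would verify that the monomials drawn in Figures~\ref{fi:Casimirs-1}--\ref{fi:Casimirs-3} really are Casimirs. This is the local flux condition $\sum_a\varepsilon_{ac}m_a=0$ at each vertex $c$, where $\varepsilon_{ac}$ is the signed number of arrows between $a$ and $c$. Because $Q_\square$ is a rhombic lattice with a single repeating interior arrow pattern, the check reduces to finitely many vertex types---generic interior vertices and the families of boundary and corner vertices---and for each exhibited exponent pattern the incoming and outgoing contributions cancel. Linear independence of $C_0,C_1$ and the $\widetilde C_iC_i$ is then immediate from their supports, since the exhibited monomials occupy disjoint (respectively nested) diagonal strips of the lattice.

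The crux is the matching upper bound, namely $\operatorname{rank}\varepsilon=(n+1)^2-(n+1)=n(n+1)$ for $Q_\square$, which is what forbids further Casimirs. I see two routes. The direct route solves the homogeneous system $\varepsilon m=0$ by exploiting the translational regularity of the lattice: the equations form a banded recursion along the diagonals, and peeling off one diagonal layer at a time expresses the kernel of the size-$n$ problem in terms of the size-$(n-1)$ problem, giving the corank by induction on $n$ with the base case checked by hand. The amalgamation route instead starts from the two triangle quivers $Q_\triangle$, whose coranks are known from the Fock--Goncharov--Shen description, and tracks the corank through the amalgamation that glues them along a side: identifying the shared boundary vertices and adding the two exchange contributions changes the rank in a controlled way, and the bookkeeping of the pairing $\widetilde C_i,C_i\mapsto\widetilde C_iC_i$ produced by the gluing yields exactly $n+1$ surviving Casimirs. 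I expect this exact-rank step to be the main obstacle, since the quiver is large with a non-uniform boundary and one must rule out accidental kernel vectors rather than merely display the obvious ones; the inductive peeling is the most robust way to close it.

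For (ii) the cleanest route uses that $B=T_2T_1$ satisfies the standard $GL_n$ Poisson--Lie bracket (established in the $B$--$B$ computation above), whose complete set of independent Casimirs is classically $\det B$ together with the ratios $\delta_k/\widetilde\delta_k$, $k=1,\dots,n-1$ (cf.\ the Remark following the Lemma). Normalizing $\det B=1$ discards the determinant and leaves exactly the $n-1$ ratios, which in the factorization coordinates of Figure~\ref{fi:Casimirs-4} are precisely $\widetilde C_i/C_i$; one confirms the identification by matching monomial supports, and the resulting corank $\operatorname{rank}=n(n-1)$ is automatically even. For (iii) the Moebius-like amalgamation identifies the upper and lower rows, reducing to $n(n+1)$ variables; crucially this gluing separates the previously combined flux condition so that $\widetilde C_i$ and $C_i$ become \emph{independent} Casimirs, and together with $C_0,C_1$ this produces $2+2(n-1)=2n$ elements, the corank recomputation $\operatorname{rank}=n(n-1)$ being carried out by the same inductive argument as in the crux step.
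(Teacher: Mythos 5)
Your proposal follows the paper's own proof essentially step for step: verify that the displayed monomials are Casimirs via the vertex flux conditions in the quiver, note the linear independence of their logarithms, and match the count against the corank of the constant log-canonical bracket, with part (ii) likewise reduced to the known corank $n-1$ of the standard Poisson--Lie bracket on $SL_n$ (the paper also invokes this, identifying the ratios with the classical Casimirs $\delta_k/\widetilde\delta_k$ of the Remark). The only divergence is that where the paper simply asserts the coranks ($n+1$ for $Q_\square$, $2n$ for the amalgamated quiver) ``can be directly computed,'' you sketch an explicit inductive peeling/amalgamation-tracking scheme to establish them --- a harmless elaboration of the same argument rather than a different route.
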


\begin{proof}  (i) The fact that monomials $C_0$, $C_1$ and $C_i\widetilde C_i$ for $i=2,\dots,n$ are Casimirs of the Poisson -Lie bracket follows immediately from the Figs.~\ref{fi:Casimirs-1}, \ref{fi:Casimirs-2}, and \ref{fi:Casimirs-3} and the definition of monomials by counting the signed number of arrows connecting the monomial with any vertex of the quiver. It is easy to observe that 
$\{\log(C_0),\log(C_1),\log(C_i)+\log(\widetilde C_i)\}$ are linearly independent functions, hence  
$\{C_0,C_1\}\cup\{ \C_i\widetilde C_i\}_{i=2,\dots,n}$ form the set of transcendentally independent generators. Finally, since the Poisson bracket in logarithms of FGS-parameters is constant, its corank can be directly computed and coincides with the dimension $n+1$ of the nullspace of the skew-symmetric coefficient matrix which implies that there are exactly $n+1$ generators of the field of Casimir functions. 

(ii) Denote by $K_i$ the product of all variables in the $i$th row $K_i=\prod_{j=0}^n K_{i,j}$, $i\in [1,n]$.  Obviously, $\{K_i,K_j\}=0$. Determinant of transport matrix $\det(B)=K_n^n\cdot K_{n-1}^{n-1}\cdot\dots\cdot K_2^2\cdot K_1$. Hence, the condition $\det(B)=1$ is rewritten as $K_n=\dfrac{1}{\prod_{j=1}^{n-1} K_j^{j/n}}$. It is easy to observe that $\{\det(B),B_{k,\ell}\}=0$.
Indeed, we will show that $\{\det(B),K_{i,j}\}=0\ \forall (i,j)\in [1,n-1]\times [0,n]$.  For $(i,j)$ satisfying $j\ne 0$, $j\ne n$, $j\ne i$ we observe that $\{K_a,K_{i,j}\}=0$ $\forall a\in[1,n]$.
Note that $\{K_{0,j},\det(B)\}=\left(\frac{1}{2}\cdot\frac{j+1}{n}-\frac{j}{n}+\frac{1}{2}\cdot\frac{j-1}{n}\right)K_{0,j}\det(B)=\left(\frac{j}{2n}+\frac{1}{2n}-\frac{j}{n}+\frac{j}{2n}-\frac{1}{2n}\right)K_{0,j}\det(B)=0$. Similar arguments show that $\det(B)$ Poisson commutes with $K_{i,j}$ in all remaining cases. Hence, the condition $\det(B)=1$ defines a Poisson submanifold $SL_n$ of $GL_n$.
The Poisson bracket on factorization parameters $K_{i,j}$ of matrix $B$ is recorded in the structure of quiver (Fig.~\ref{fi:Casimirs-4}). The structure of quiver directly implies that $\widetilde C_i/ C_i$  is a Casimir for $i=2,\dots,n$. It is well known that the Poisson bracket induced on $SL_n$ coincides
with the standard Poisson-Lie bracket. Moreover, corank of the standard Poisson-Lie bracket is $n-1$, implying that functions $\widetilde C_i/ C_i$ generate the field of the rational Casimir functions.

(iii) Let's recall that the Poisson bracket on the amalgamated network becomes constant in logarithmic coordinates $\log K_{i,j}$ and all functions $\log C_0,\log C_1, \log C_i, \log \widetilde C_i$ are linear in $\log K_{i,j}$, and they are linearly independent. A direct observation of the quiver implies that each of this function is a Casimir, hence  $C_0,C_1,C_i,  {\widetilde C}_i, \ i=2,\dots,n$ form a system of $2n$ independent Casimir functions on the amalgamated network.  
To show the completeness of this system of Casimirs we consider the rank  of the Poisson tensor on the amalgamated network.   The statement (i) claims that corank of the Poisson tensor before amalgamation is $n+1$. In logarthmic coordinates the amalgamation projects the space of network weights onto the new space of dimension $n+1$  lesser with $(n+1)$-dimensional linear fiber.
The corank of the  Poisson tensor is easily computed from the corresponding quiver~\ref{fi:Casimirs-5}, and it equals $2n$.
This fact accomplishes the proof of part (iii) of the Theorem.
\end{proof}

\vskip 2.5cm

\begin{figure}[H]
\begin{center}
\begin{pspicture}(-2,-1.2)(2,1.2){\psset{unit=1}
\newcommand{\ODIN}{%
{\psset{unit=1}
\put(0,0){\pscircle[fillstyle=solid,fillcolor=white,linecolor=white]{.15}}
\put(0,0){\makebox(0,0)[cc]{\hbox{\tcw{\large$\mathbf 1$}}}}
\put(0,0){\makebox(0,0)[cc]{\hbox{\tcr{$\mathbf 1$}}}}
}}
\newcommand{\DVA}{%
{\psset{unit=1}
\put(0,0){\pscircle[fillstyle=solid,fillcolor=white,linecolor=white]{.15}}
\put(0,0){\makebox(0,0)[cc]{\hbox{\tcw{\large$\mathbf 2$}}}}
\put(0,0){\makebox(0,0)[cc]{\hbox{\tcr{$\mathbf 2$}}}}
}}
\newcommand{\TRI}{%
{\psset{unit=1}
\put(0,0){\pscircle[fillstyle=solid,fillcolor=white,linecolor=white]{.15}}
\put(0,0){\makebox(0,0)[cc]{\hbox{\tcw{\large$\mathbf 3$}}}}
\put(0,0){\makebox(0,0)[cc]{\hbox{\tcr{$\mathbf 3$}}}}
}}
\multiput(-3,1.7)(1,0){4}{\psline[linecolor=blue,linewidth=2pt]{->}(0.1,0)(.9,0)}
\multiput(-2.5,0.85)(1,0){3}{\psline[linecolor=blue,linewidth=2pt]{->}(0.1,0)(.9,0)}
\multiput(-2,0)(1,0){2}{\psline[linecolor=blue,linewidth=2pt]{->}(0.1,0)(.9,0)}
\multiput(-1.5,-0.85)(1,0){1}{\psline[linecolor=blue,linewidth=2pt]{->}(0.1,0)(.9,0)}
\multiput(2.5,-0.85)(-1,0){3}{\psline[linecolor=blue,linewidth=2pt]{->}(-0.1,0)(-.9,0)}
\multiput(2,-0)(-1,0){2}{\psline[linecolor=blue,linewidth=2pt]{->}(-0.1,0)(-.9,0)}
\multiput(1.5,0.85)(-1,0){1}{\psline[linecolor=blue,linewidth=2pt]{->}(-0.1,0)(-.9,0)}
\multiput(1.,1.7)(0.5,-0.85){3}{\psline[linecolor=blue,linestyle=dashed,linewidth=2pt]{->}(0.05,-0.085)(.45,-0.765)}
\multiput(.5,.85)(0.5,-0.85){2}{\psline[linecolor=blue,linewidth=2pt]{->}(0.05,-0.085)(.45,-0.765)}
\multiput(0,0)(0.5,-0.85){1}{\psline[linecolor=blue,linewidth=2pt]{->}(0.05,-0.085)(.45,-0.765)}
\multiput(-1.5,-0.85)(-0.5,0.85){3}{\psline[linecolor=blue,linestyle=dashed,linewidth=2pt]{->}(-0.05,0.085)(-.45,0.765)}
\multiput(-0.5,-.85)(-0.5,0.85){3}{\psline[linecolor=blue,linewidth=2pt]{->}(-0.05,0.085)(-.45,0.765)}
\multiput(0,0)(-0.5,0.85){2}{\psline[linecolor=blue,linewidth=2pt]{->}(-0.05,0.085)(-.45,0.765)}
\multiput(0.5,.85)(-0.5,0.85){1}{\psline[linecolor=blue,linewidth=2pt]{->}(-0.05,0.085)(-.45,0.765)}
\multiput(0.5,-0.85)(0.5,0.85){2}{\psline[linecolor=blue,linewidth=2pt]{->}(0.05,0.085)(.45,0.765)}
\multiput(1.5,-0.85)(0.5,0.85){1}{\psline[linecolor=blue,linewidth=2pt]{->}(0.05,0.085)(.45,0.765)}
\multiput(-0.,1.7)(-0.5,-0.85){3}{\psline[linecolor=blue,linewidth=2pt]{->}(-0.05,-0.085)(-.45,-0.765)}
\multiput(-1.,1.7)(-0.5,-0.85){2}{\psline[linecolor=blue,linewidth=2pt]{->}(-0.05,-0.085)(-.45,-0.765)}
\multiput(-2,1.7)(-0.5,-0.85){1}{\psline[linecolor=blue,linewidth=2pt]{->}(-0.05,-0.085)(-.45,-0.765)}
\psbezier[linecolor=blue,linestyle=dashed,linewidth=2pt]{<-}(-2.93,1.77)(1,5.5)(4.5,2)(2.57,-0.78)
\psbezier[linecolor=white,linewidth=3pt]{->}(-1.93,1.77)(2,5.5)(4.5,-1.5)(2.58,-0.87)
\psbezier[linecolor=blue,linewidth=2pt]{->}(-1.93,1.77)(2,5.5)(4.5,-1.5)(2.58,-0.87)
\psbezier[linecolor=white,linewidth=3pt]{->}(-0.93,1.77)(3,5.5)(5.5,-3.5)(1.58,-0.87)
\psbezier[linecolor=blue,linewidth=2pt]{->}(-0.93,1.77)(3,5.5)(5.5,-3.5)(1.58,-0.87)
\psbezier[linecolor=white,linewidth=3pt]{->}(0.07,1.77)(4,5.5)(6.5,-4.5)(0.58,-0.87)
\psbezier[linecolor=blue,linewidth=2pt]{->}(0.07,1.77)(4,5.5)(6.5,-4.5)(0.58,-0.87)
\psbezier[linecolor=blue,linestyle=dashed,linewidth=2pt]{->}(.93,1.77)(-4,6.7)(-5.5,-3.85)(-1.57,-0.92)
\psbezier[linecolor=white,linewidth=3pt]{<-}(-0.08,1.75)(-4,6.7)(-5.5,-3.85)(-0.58,-0.87)
\psbezier[linecolor=blue,linewidth=2pt]{<-}(-0.08,1.75)(-4,6.7)(-5.5,-3.85)(-0.58,-0.87)
\psbezier[linecolor=white,linewidth=3pt]{<-}(-1.1,1.73)(-4,6.7)(-5.5,-3.85)(0.42,-0.87)
\psbezier[linecolor=blue,linewidth=2pt]{<-}(-1.1,1.73)(-4,6.7)(-5.5,-3.85)(0.42,-0.87)
\psbezier[linecolor=white,linewidth=3pt]{<-}(-2.1,1.72)(-4,6.7)(-5.5,-3.85)(1.4,-0.87)
\psbezier[linecolor=blue,linewidth=2pt]{<-}(-2.1,1.72)(-4,6.7)(-5.5,-3.85)(1.4,-0.87)
\multiput(-3.,1.7)(0.5,-0.85){4}{
\multiput(0,0)(1,0){5}{\pscircle[fillstyle=solid,fillcolor=lightgray]{.1}}}
}
\end{pspicture}
\end{center}
\caption{\small
The amalgamated quiver  $[1,n]\times [0,n]$.
}
\label{fi:Casimirs-5}
\end{figure}
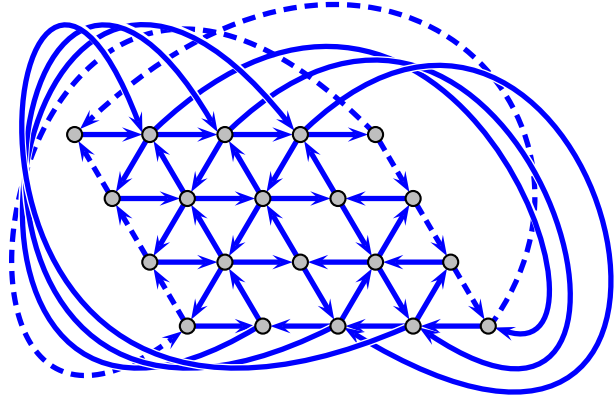

\begin{lemma}\label{uni}
The condition that both $\mathbb A$ and $\widetilde{\mathbb A}:=B\mathbb A B^{\text{T}}$ are unipotent implies that $C_0=C_1=C_i=\widetilde C_i=1$. This implies that all central elements of $B$ equal the unity. 
\end{lemma}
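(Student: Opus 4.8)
The plan is to read the diagonal entries of $\mathbb A$ and $\widetilde{\mathbb A}$ directly off the factorizations (\ref{A}) and (\ref{t-A}) and to recognize them as the pre-Casimir monomials of Figures~\ref{fi:Casimirs-1}--\ref{fi:Casimirs-3}. The key observation is that every transport matrix is triangular with diagonal entries equal to single monomials in the FGS-parameters $K_{i,j}$ (the product of the face weights met along the corresponding diagonal lattice path). Hence, in the logarithmic coordinates $\log K_{i,j}$ — in which the Poisson bracket is constant and the Casimir conditions are linear — each diagonal entry of $\mathbb A$ and of $\widetilde{\mathbb A}$ is a Laurent monomial. Unipotency is exactly the statement that all $2n$ of these monomials equal $1$, and the whole proof reduces to matching them with the $2n$ Casimir monomials of Theorem~\ref{Casimirs}(iii).

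First I would extract the diagonals. Since the matrices $T_1$, $S\widetilde T_2S$, $S\widetilde T_1^{\text T}S$ (and likewise $S[\overline T_2]^{-1}S$, $S[\overline T_1^{\text T}]^{-1}S$, $T_2^{\text T}$) are all upper-triangular, the diagonal of a product is the product of diagonals, and conjugation by $S$ only reverses the index order. This gives, with signs depending only on $n$,
\be
\mathbb A_{k,k}=\pm\,(T_1)_{k,k}^{-1}\,(\widetilde T_2)_{n+1-k,\,n+1-k}\,(\widetilde T_1)_{n+1-k,\,n+1-k},\qquad
\widetilde{\mathbb A}_{k,k}=\pm\,(\overline T_2)_{n+1-k,\,n+1-k}^{-1}\,(\overline T_1)_{n+1-k,\,n+1-k}^{-1}\,(T_2)_{k,k}.
\ee
Each factor on the right is a monomial in the $K_{i,j}$, so the two families $\{\mathbb A_{k,k}\}_{k=1}^n$ and $\{\widetilde{\mathbb A}_{k,k}\}_{k=1}^n$ consist of $2n$ Laurent monomials; the sign prefactors are fixed by $n\bmod 2$ and play no role in the $K$-content.

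Next I would identify these $2n$ monomials with the pre-Casimirs. I claim the $n$ monomials $\widetilde{\mathbb A}_{k,k}$ reproduce, as exponent vectors in the $K_{i,j}$, the family $\widetilde C_2,\dots,\widetilde C_n$ together with $C_0,C_1$ (Figures~\ref{fi:Casimirs-1}, \ref{fi:Casimirs-3}), while the $n$ monomials $\mathbb A_{k,k}$ reproduce $C_2,\dots,C_n$ together with $C_0,C_1$ (Figures~\ref{fi:Casimirs-2}, \ref{fi:Casimirs-3}), the identification being made after the Moebius-like amalgamation of the upper and lower rows. Conceptually this is forced: the diagonal entries are precisely the functions one freezes to cut out the unipotent locus $\A_n$ inside $\BB_n$, so they are Casimirs of the reflection bracket; being $2n$ in number and (generically) independent, they must generate the same $2n$-dimensional Casimir field that, by Theorem~\ref{Casimirs}(iii), is generated by $C_0,C_1,C_i,\widetilde C_i$. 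Granting the identification, $\mathbb A_{k,k}=\widetilde{\mathbb A}_{k,k}=1$ for all $k$ forces $C_0=C_1=C_i=\widetilde C_i=1$ on the positive real locus, which is the first assertion.

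Finally, the central elements of $B$ are, by Theorem~\ref{Casimirs}(ii), exactly the ratios $\widetilde C_i/C_i$, $i=2,\dots,n$, which generate the field of Casimirs of the standard $SL_n$ Poisson--Lie bracket; with every $C_i=\widetilde C_i=1$ these ratios all equal unity, proving the last sentence. The main obstacle is the exponent-matching of the third step: one must track, through the products in (\ref{A}) and (\ref{t-A}) and through the amalgamation, exactly which $K_{i,j}$ enter each diagonal entry and with what multiplicity, and then compare with the red labels in the figures. Independence is not a separate difficulty, since in the coordinates $\log K_{i,j}$ everything is linear: it suffices to check that the $2n$ exponent vectors of $\{\mathbb A_{k,k},\widetilde{\mathbb A}_{k,k}\}$ span the Casimir subspace, a finite linear-algebra verification consistent with the corank $2n$ already established in Theorem~\ref{Casimirs}(iii).
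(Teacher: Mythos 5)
Your proposal is correct in substance and follows essentially the paper's own route: the paper proves Lemma~\ref{uni} by ``direct computation,'' and what you outline --- reading the diagonal entries of $\mathbb A$ and $\widetilde{\mathbb A}$ off the triangular factorizations (\ref{A}) and (\ref{t-A}) as Laurent monomials in the $K_{i,j}$ (the diagonal of a product of triangular matrices being the product of diagonals, with conjugation by $S$ merely reversing index order), and then comparing exponent vectors with the Casimir monomials of Theorem~\ref{Casimirs}(iii) --- is precisely that computation made explicit. One bookkeeping slip: each matrix has only $n$ diagonal entries, so the families $\{\mathbb A_{k,k}\}$ and $\{\widetilde{\mathbb A}_{k,k}\}$ cannot each separately reproduce $C_0,C_1$ together with all of $C_2,\dots,C_n$ (respectively $\widetilde C_2,\dots,\widetilde C_n$), which is $n+1$ monomials per family; only the joint count of $2n$ conditions against $2n$ Casimirs works out, but your own fallback argument --- the $2n$ diagonal log-monomials are Casimirs, hence lie in the corank-$2n$ kernel of the Poisson tensor, and if linearly independent they span it, so setting them to $1$ forces every Casimir to equal $1$ on the positive locus --- repairs this without needing the exact term-by-term identification. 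What you leave deferred, namely the independence of the $2n$ exponent vectors and the verification that the diagonal monomials are Casimirs of the \emph{amalgamated quiver} bracket (not merely of the reflection bracket on $\mathbb A$, which does not automatically pull back through the map from $K$-variables), is exactly the finite linear-algebra content that the paper subsumes under ``direct computation,'' so your plan is a faithful, if incomplete at that one checkable point, expansion of the paper's proof.
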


\begin{proof} Direct computation.\end{proof}

\section{The moduli space of framed flag configurations on marked surfaces}\label{s:or}

In this section, we describe an enhanced decorated space of flat connections on non-oriented surfaces. 

Let $\Sigma$ be, generally speaking, a non-orientable two-dimensional topological surface with holes and marked points. We assume that each boundary component of $\Sigma$ has at least one marked point. Let $T$ be a triangulation of $\Sigma$ with vertices at marked points. Each triangle $\Delta$ of $T$ is contractible. Hence we can co-orient all triangles by coloring each triangle's top and bottom faces in green and blue. Triangles glued along their edges form the surface $\Sigma$. We don't assume that gluing is compatible with coloring (i.e., crossing a common edge of two glued triangles, one can go from one color to the other, see ~Fig.~\ref{fig:two-sided surface} ).  Each marked point $p$ has a point on the green face (called green point and denoted by $p^g$) and a corresponding point on the blue face (called blue and denoted by $p^b$). All green marked points are equipped with framed complete flags~\cite{FG1}, i.e., an element of $GL_n/N$, where $N$ is the subgroup of unipotent upper-triangular matrices. By $N_-$ we denote the subgroup of unipotent lower-triangular matrices.
\begin{lemma} A framed flag $x\in GL_n/N$ determines a unique dual framed flag $x^*\in N_-\setminus GL_n$. 
\end{lemma}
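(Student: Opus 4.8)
The plan is to argue entirely at the level of cosets, never passing to the underlying subspace flags. Recall that a point $x\in GL_n/N$ is a right coset $x=gN$, and that right multiplication of a representative $g$ by $u\in N$ replaces its $i$-th column by $g_{\cdot i}+\sum_{j<i}u_{ji}\,g_{\cdot j}$; hence it fixes both the flag $V_i=\mathrm{span}(g_{\cdot 1},\dots,g_{\cdot i})$ and the induced decoration of each quotient $V_i/V_{i-1}$. Dually, a point of $N_-\backslash GL_n$ is a left coset $N_-h$, and left multiplication by $\ell\in N_-$ replaces the $i$-th row of $h$ by $h_{i\cdot}+\sum_{j<i}\ell_{ij}\,h_{j\cdot}$, so it encodes a framed flag whose $i$-th step is spanned by the first $i$ rows of $h$. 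Proving the lemma therefore amounts to producing, from $gN$, a left $N_-$-coset that does not depend on the choice of $g$.

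First I would write down an explicit candidate and then check that it descends. The natural choice is the dual-basis (annihilator) construction: the rows of $g^{-1}$ are the covectors dual to the columns of $g$, so the annihilator flag $V_{n-i}^{\perp}$ is spanned by the last $i$ of these rows; reversing their order by the matrix $S=\sum_i(-1)^iE_{i,n+1-i}$ already introduced brings this into standard increasing form. Concretely I set
\[
x^{*}:=N_-\,S\,g^{-1}.
\]
Existence is then immediate, since any representative $g$ of $x$ yields such an $h=Sg^{-1}$.

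For uniqueness (that is, well-definedness) I would replace $g$ by $gu$ with $u\in N$ and compute
\[
S(gu)^{-1}=S\,u^{-1}g^{-1}=\bigl(S\,u^{-1}S^{-1}\bigr)\,S\,g^{-1},
\]
so the whole claim reduces to showing $S\,u^{-1}S^{-1}\in N_-$. This is the key and essentially only nontrivial point: conjugation by $S$ carries $N$ onto $N_-$. Indeed $S$ realizes the order-reversing permutation $i\mapsto n+1-i$ up to signs, so $S E_{ij}S^{-1}$ is proportional to $E_{n+1-i,\,n+1-j}$, turning strictly-upper entries into strictly-lower ones; and the signs $(-1)^i$ are arranged precisely so that the diagonal entries of the conjugate are $+1$, preserving unipotency. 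Thus $SNS^{-1}=N_-$, the left coset $N_-\,Sg^{-1}$ is independent of the representative, and $x^{*}$ is determined uniquely by $x$.

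The main obstacle is exactly this bookkeeping in $SNS^{-1}=N_-$: one must verify that the signs keep the conjugate \emph{unipotent} rather than merely lower-triangular, which follows from $S^2=(-1)^{n+1}\mathrm{Id}$, giving diagonal entries $(-1)^i\cdot(-1)^{n+1-i}\cdot(-1)^{n+1}=1$. One could sidestep the signs entirely by conjugating with the plain reversal permutation $w_0$, which visibly satisfies $w_0Nw_0^{-1}=N_-$; the two choices differ by a fixed element and yield the same well-defined notion of dual. Although the lemma asks only for existence and uniqueness, I would finally remark that $g\mapsto Sg^{-1}$ is invertible, so the construction is in fact a bijection $GL_n/N\to N_-\backslash GL_n$ realizing flag duality, and that the decoration transfers automatically because the entire argument is carried out on $N_-$-cosets rather than on the bare flags.
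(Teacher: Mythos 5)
Your proof is correct, but it takes a genuinely different route from the paper's. The paper works intrinsically with the flags themselves: it sets $(x^*)^k=\mathrm{Ann}(x^{n-k})$ and transfers the decorations through the canonical pairing $\pi\colon\wedge^k V\simeq \wedge^{n-k}V^*$ induced by wedging into $\wedge^n V\simeq\mathbb{R}$, with the restricted forms nondegenerate by construction; uniqueness is then automatic because every ingredient is canonical and no bases or representatives ever appear. You instead argue at the level of group cosets, exhibiting the dual explicitly as $x^*=N_-\,S\,g^{-1}$ and reducing the whole lemma to the single conjugation identity $SNS^{-1}=N_-$, whose sign bookkeeping you handle correctly: from $S=\sum_i(-1)^iE_{i,n+1-i}$ one gets $S^2=(-1)^{n+1}\mathrm{Id}$ and $SE_{ij}S^{-1}=(-1)^{j-i}E_{n+1-i,\,n+1-j}$, so conjugation sends strictly upper entries to strictly lower ones and keeps the diagonal equal to $1$, exactly as you claim. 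Your route buys concreteness, a bijection $GL_n/N\to N_-\backslash GL_n$ for free, and a pleasant link to the matrix $S$ that the paper already uses in its groupoid path relations; the paper's route buys a coordinate-free statement of what the dual decoration \emph{is} geometrically. The one point you pass over lightly is the identification of your coset-level dual with the paper's annihilator-plus-pairing dual at the level of decorations: you verify that the first $k$ rows of $Sg^{-1}$ span $\mathrm{Ann}(V_{n-k})$, but you do not match the induced decoration with $\pi^{-1}(\omega_{n-k})$ sign-for-sign (different sign conventions, e.g.\ replacing $S$ by $w_0$ or by $DS$ with a diagonal sign matrix $D$, give honestly different cosets, since $N_-h=N_-h'$ forces $h'h^{-1}\in N_-$). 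Because the lemma is essentially definitional --- any representative-independent construction of a coset in $N_-\backslash GL_n$ defines a dual framed flag --- this is harmless here, but it would matter if a later argument relied on the specific normalization coming from $\pi$.
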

\begin{proof} Let $V={\mathbb R}^n$, $\omega_\ell$ be $\ell$-form assigned to the $\ell$-space $x^\ell$, $\omega^*_\ell$ be $\ell$-form assigned to the $\ell$-space $(x^*)^\ell$.
Note that $\omega_k\in \wedge_k V^*$, $(\omega^*)_k\in \wedge^k V$, the standard map $\wedge^k V\oplus \wedge^{n-k} V\to \wedge^n V\simeq {\mathbb R}$ determines the nondegenerate 
pairing $\pi:\wedge^k V\simeq \wedge^{n-k} V^*$.
Define $x^*$ as follows: the space $(x^*)^k= Ann(x^{n-k})$ and the form $(\omega^*)_k=\pi^{-1}(\omega_{n-k})$ restricted on $(x^*)^k$. The restriction of $k$-form is nondegenerate by construction.
\end{proof}
By abusing notation, we define the isomorphism between the framed and dual framed flags by $\pi$.  
A framed configuration assigns to any white vertex $p_i$ a framed flag ${\bf f}(p_i)$ and to the corresponding black vertex $p^b$ the dual framed flag $\pi({\bf f}(p_i))$ such that all flags are pairwise in general position.


\begin{figure}[H]

\begin{center}
\begin{pspicture}(-1,-1.5)(1,1.5){\psset{unit=2}
\definecolor{lightgreen}{rgb}{0.7, 1, 0.7}

\pspolygon[linecolor=black,fillstyle=solid,fillcolor=blue](-1.,-0.2)(-0.15,-0.7)(0.15,0.3)

\pspolygon[linecolor=black,fillstyle=solid,fillcolor=lightgreen](-1.,0)(-0.15,-0.5)(0.15,0.5)

\pspolygon[linecolor=black,fillcolor=black](-1.,0)(-0.15,-0.5)(-0.15,-0.7)(-1,-0.2)

\pspolygon[linecolor=black,fillcolor=black](-0.15,-0.5)(-0.15,-0.7)(0.15,0.3)(0.15,0.5)

\pspolygon[linecolor=black,fillstyle=solid,fillcolor=lightgreen](1.3,-0.2)(0.15,-0.7)(0.45,0.3)

\pspolygon[linecolor=black,fillstyle=solid,fillcolor=blue](1.3,0)(0.15,-0.5)(0.45,0.5)

\pspolygon[linecolor=black,fillcolor=lightgray](1.3,0)(0.15,-0.5)(0.15,-0.7)(1.3,-0.2)


%
%
 \put(-1.1,0.15){\makebox(0,0)[bc]{\hbox{{${\bf f}(p_1^g)$}}}}
\put(0.05,0.65){\makebox(0,0)[bc]{\hbox{{${\bf f}(p_2^g)$}}}}
\put(-0.3,-0.3){\makebox(0,0)[bc]{\hbox{{${\color{black}{\bf f}(p_3^g)}$}}}}
\put(1.3,0.15){\makebox(0,0)[bc]{\hbox{{${\bf f}(p_1^b)$}}}}
\put(.55,0.65){\makebox(0,0)[bc]{\hbox{{${\bf f}(p_2^b)$}}}}
\put(0.35,-0.3){\makebox(0,0)[bc]{\hbox{{{${\bf f}{\color{white}(p_3^b)}$}}}}}
\put(-1.1,-.45){\makebox(0,0)[bc]{\hbox{{${\bf f}(p_1^b)$}}}}
\put(-0.3,-.9){\makebox(0,0)[bc]{\hbox{{{${\bf f}(p_3^b)$}}}}}
\put(1.3,-0.5){\makebox(0,0)[bc]{\hbox{{${\bf f}(p_1^g)$}}}}
\put(0.35,-0.9){\makebox(0,0)[bc]{\hbox{{${\bf f}(p_3^g)$}}}}
}
\end{pspicture}
%
%
%
%
%
\end{center}
\caption{\small
Two-sided surface: the green faces carry framed flags; the blue faces carry dual framed flags.
}
\label{fig:two-sided surface}
\end{figure}
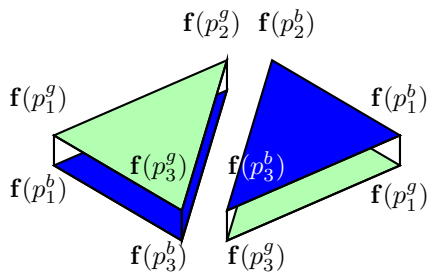

Let $\vec e=(t,h)$ be an oriented side of triangle $\Delta$ whose endpoints are equipped with the framed flags in general position, $t$ be the tail endpoint of $\vec e$, $h$ be the head of $\vec e$. Let $\phi$ be the forgetful map from the framed flags to unframed flags 
by forgetting corresponding forms, $\phi^*$ be the forgetful map from the dual framed flags to unframed dual flags. The pair 
$\left(\phi({\bf f})(t),{\bf f}(h)\right)$ assigns to the oriented side $\vec e$ the basis ${\bf b}(\vec e)$. The basis is constructed as follows. First of all, flags $\phi({\bf f})(t),\phi({\bf f})(h)$ determine $n$-tuple lines in $V$ with $k$th line $\ell_k$ obtained as intersection of $k$-subspace of the second flag with $n-k+1$-dimensional subspace of the first. Let $\omega^h_k$ be the $k$-form on the $k$-subspace of the flag ${\bf f}(h)$. Choose  the vector $v_k\in\ell_k$ such that 
$\omega^h_k(v_1\wedge\dots v_k)=1$. These conditions consecutively determine $v_1,v_2$, etc uniquely. Gluing two similarly oriented triangles along the side $(t,h)$ we obtain two bases $v_1,\dots,v_n$ and $v'_1,\dots,v'_n$  where $v_k$ is proportional to $v'_{n-k}$. The transition from one basis to another results in an extra multiplication by the antidiagonal matrix $S$ and a diagonal matrix. Gluing two oppositely oriented triangles along side $(t,h)$ we obtain two bases along this side (each in its triangle), with one basis in $V$ (green triangle) and the other in $V^*$. We postulate that gluing corresponds to an operator $V\to V^*$, which transforms the basis in $V$ to the corresponding basis in $V^*$. Therefore, the non-oriented loop leads to a bilinear form.

\section{Expressing $\mathbb A$, $\widetilde{\mathbb A}$}

\subsection{The quiver for $\mathbb A$, $\widetilde{\mathbb A}$}

We now use Lemma~\ref{uni} to construct a special quiver for entries of the matrices $\mathbb A$ and $\widetilde{\mathbb A}$. Starting with expressions (\ref{A}) and (\ref{t-A}), we first observe that due to the transposition operation, (frozen) cluster variables on upper and lower rows are amalgamated in "Moebius-like'' way, with inverting the orientation 
like in Section~\ref{s:or}.
We then declare newly obtained amalgamated variables distinct from the variables at corners of the graph to be new dynamical variables and express all remaining $2n$ frozen variables (including two variables obtained by amalgamating opposite corners of the quiver) via dynamical variables using that all $2n$ Casimir elements must be equal to the unity. Since frozen variables come with power two into these Casimirs, the resulting expressions will contain dynamical cluster variables in powers $0$, $1/2$. and $-1/2$ only. Details of the calculation are the same as in \cite{ChSh20}, so we only present the result. 

\begin{figure}[H]
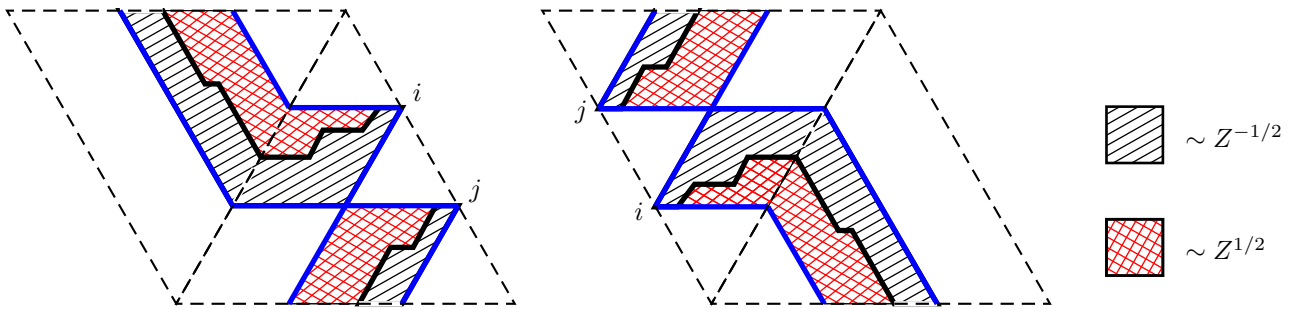

\begin{center}

\caption{\small
The schematic depiction of calculating a normalized element $a_{i,j}$ of $\mathbb A$ (on the left) and $\widetilde a_{i,j}$ of $\widetilde{\mathbb A}$ (on the right): we have a sum over all admissible paths between $i$th and $j$th sources; variables in the hatched areas come in power $-1/2$; variables in the cross-hatched areas come in power $1/2$.
}
\label{fi:A-scheme}
\end{figure}

The $(i,j)$ entry  of $\mathbb A$ reads $(-1)^{i+j}a_{i,j}$ with positive $a_{i,j}$ given by the sum over all admissible paths from $i$ to $j$ (as shown in the left side of Fig.~\ref{fi:A-scheme}) of products of cluster variables: those in cross-hatched areas come in the power $1/2$ and those in hatched areas come in the power $-1/2$; $a_{i,j}$ are sums of nonnegative terms, which always include two mutually reciprocal terms with either hatched or cross-hatched areas empty, so for any choice of real-valued cluster variables we have that
$$
a_{i,j}>2 \text{ and } \widetilde a_{i,j}>2.
$$ 
This is in line with identifying matrix elements of $\mathbb A$ and $\widetilde{\mathbb A}$ with geodesic functions in the next section.

\begin{example} Calculation of $a_{i,j}$ for $n=4$.

We first redraw the pattern in Figs~\ref{fi:A-q-network} and \ref{fi:A-scheme} by attaching a mirror copy of the right triangular sub-network to the top of the left triangle in Fig.~\ref{fi:A-scheme}. The obtained network is depicted in Fig.~\ref{fi:A-net}; cluster variables correspond to faces (and only inner faces contribute to the normalized entries $a_{i,j}$), variables $s_i$ and $f_i$ are obtained by amalgamations, and variables $\{a,b,c\}$ and $\{p,q,r\}$ are inner variables of the respective right and left triangles. All horizontal double arrows are from right to left and all vertical double arrows are from top to bottom. To simplify the picture, we do not indicate edges of the quiver encoding Poisson relations between cluster variables.

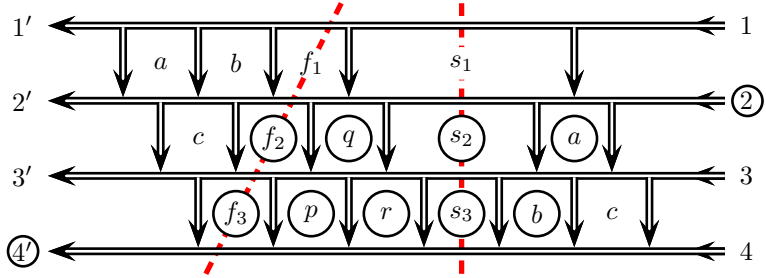
\begin{figure}[H]
\begin{center}
\begin{pspicture}(-5,-0.5)(5,3.5)
{\psset{unit=1}
\psline[linewidth=2pt,linestyle=dashed,linecolor=red](-2.4,-0.3)(-0.6,3.3)
\psline[linewidth=2pt,linestyle=dashed,linecolor=red](1,-0.3)(1,3.3)
\multiput(0,0)(0,1){4}{\psline[doubleline=true,linewidth=1pt, doublesep=1pt, linecolor=black]{<-}(4,0)(4.5,0)}
\multiput(0,0)(0,1){4}{\psline[doubleline=true,linewidth=1pt, doublesep=1pt, linecolor=black]{<-}(-4.5,0)(4.5,0)}
\multiput(-3.5,3)(0.5,-1){3}{\multiput(0,0)(1,0){4}{\psline[doubleline=true,linewidth=1pt, doublesep=1pt, linecolor=black]{->}(0,-0.02)(0,-0.95)}}
\multiput(1.5,1)(1,0){3}{\psline[doubleline=true,linewidth=1pt, doublesep=1pt, linecolor=black]{->}(0,-0.02)(0,-0.95)}
\multiput(2,2)(1,0){2}{\psline[doubleline=true,linewidth=1pt, doublesep=1pt, linecolor=black]{->}(0,-0.02)(0,-0.95)}
\multiput(2.5,3)(1,0){1}{\psline[doubleline=true,linewidth=1pt, doublesep=1pt, linecolor=black]{->}(0,-0.02)(0,-0.95)}
\rput(-2,0.5){\pscircle[linecolor=black,fillstyle=solid, fillcolor=white,linewidth=1pt]{0.3}
\makebox(0,0){\hbox{{$f_3$}}}}
\rput(-1.5,1.5){\pscircle[linecolor=black,fillstyle=solid, fillcolor=white,linewidth=1pt]{0.3}
\makebox(0,0){\hbox{{$f_2$}}}}
\rput(-1,2.5){\pscircle[linecolor=white,fillstyle=solid, fillcolor=white,linewidth=1pt]{0.2}
\makebox(0,0){\hbox{{$f_1$}}}}
\rput(1,0.5){\pscircle[linecolor=black,fillstyle=solid, fillcolor=white,linewidth=1pt]{0.3}
\makebox(0,0){\hbox{{$s_3$}}}}
\rput(1,1.5){\pscircle[linecolor=black,fillstyle=solid, fillcolor=white,linewidth=1pt]{0.3}
\makebox(0,0){\hbox{{$s_2$}}}}
\rput(1,2.5){\pscircle[linecolor=white,fillstyle=solid, fillcolor=white,linewidth=1pt]{0.2}
\makebox(0,0){\hbox{{$s_1$}}}}
\rput(2,0.5){\pscircle[linecolor=black,fillstyle=solid, fillcolor=white,linewidth=1pt]{0.3}
\makebox(0,0){\hbox{{$b$}}}}
\rput(2.5,1.5){\pscircle[linecolor=black,fillstyle=solid, fillcolor=white,linewidth=1pt]{0.3}
\makebox(0,0){\hbox{{$a$}}}}
\rput(3,0.5){\pscircle[linecolor=white,fillstyle=solid, fillcolor=white,linewidth=1pt]{0.3}
\makebox(0,0){\hbox{{$c$}}}}
\rput(-2,2.5){\pscircle[linecolor=white,fillstyle=solid, fillcolor=white,linewidth=1pt]{0.3}
\makebox(0,0){\hbox{{$b$}}}}
\rput(-3,2.5){\pscircle[linecolor=white,fillstyle=solid, fillcolor=white,linewidth=1pt]{0.3}
\makebox(0,0){\hbox{{$a$}}}}
\rput(-2.5,1.5){\pscircle[linecolor=white,fillstyle=solid, fillcolor=white,linewidth=1pt]{0.3}
\makebox(0,0){\hbox{{$c$}}}}
\rput(0,0.5){\pscircle[linecolor=black,fillstyle=solid, fillcolor=white,linewidth=1pt]{0.3}
\makebox(0,0){\hbox{{$r$}}}}
\rput(-1,0.5){\pscircle[linecolor=black,fillstyle=solid, fillcolor=white,linewidth=1pt]{0.3}
\makebox(0,0){\hbox{{$p$}}}}
\rput(-0.5,1.5){\pscircle[linecolor=black,fillstyle=solid, fillcolor=white,linewidth=1pt]{0.3}
\makebox(0,0){\hbox{{$q$}}}}
\rput(4.7,3){\makebox(0,0)[cl]{\hbox{{$1$}}}}
\rput(4.7,2){\pscircle[linecolor=black,fillstyle=solid, fillcolor=white,linewidth=1pt](0.1,0){0.2}
\makebox(0,0)[cl]{\hbox{{$2$}}}}
\rput(4.7,1){\makebox(0,0)[cl]{\hbox{{$3$}}}}
\rput(4.7,0){\makebox(0,0)[cl]{\hbox{{$4$}}}}
\rput(-4.7,3){\makebox(0,0)[cr]{\hbox{{$1'$}}}}
\rput(-4.7,2){\makebox(0,0)[cr]{\hbox{{$2'$}}}}
\rput(-4.7,1){\makebox(0,0)[cr]{\hbox{{$3'$}}}}
\rput(-4.7,0){\pscircle[linecolor=black,fillstyle=solid, fillcolor=white,linewidth=1pt](-0.13,0){0.2}
\makebox(0,0)[cr]{\hbox{{$4'$}}}}
}
\end{pspicture}
\end{center}
\caption{\small
The network for calculating $a_{i,j}$ for $n=4$: it contains two copies of the right triangle (one is a mirror reflection of the other) and a single copy of the left triangle; dashed lines indicate amalgamated boundaries of triangles; cluster variables of faces separated by the dashed lines are obtained by amalgamations of former frozen variables. A matrix element $a_{i,j}$ is obtained by taking a sum over all paths from $i$th source to $j'$th sink; only cluster variables ``inside'' the $i-j'$ rectangle contribute:  in powers $1/2$ for variables of faces above the path and in powers $-1/2$ for variables of faces below the path. In the figure, circled are variables contributing to $a_{2,4}$.}
\label{fi:A-net}
\end{figure}

To obtain the normalized element $a_{i,j}$ we take all paths from $i$th source to $j'$th sink; they all lie between the uppermost path and the lowest path; we take cluster variables of faces confined between the uppermost and lowest paths and set into correspondence to every path the product of these variables: those corresponding to faces above the path enter with the power $1/2$ and those corresponding to faces below the path enter with the power $-1/2$. We then take the sum over all paths. 
Diagonal entries $a_{i,i}=1$.
We have
\begin{align*}
&a_{1,2}=(s_1f_1ab)^{1/2}+s_1^{-1/2}(f_1ab)^{1/2}+(s_1f_1)^{-1/2}(ab)^{1/2}+(s_1f_1a)^{-1/2}b^{1/2}+(s_1f_1ab)^{-1/2}\\
&a_{2,3}=(as_2qf_2c)^{1/2}+a^{-1/2}(s_2qf_2c)^{1/2}+(as_2)^{-1/2}(qf_2c)^{1/2}+(as_2q)^{-1/2}(f_2c)^{1/2}+(as_2qf_2)^{-1/2}c^{1/2}+(as_2qf_2c)^{-1/2},
\end{align*}
etc. The longest entry, $a_{2,4}$, comprises 17 terms:
\begin{align*}
&a_{2,4}=(f_2qs_2a)^{1/2}\bigl( (f_3prs_3b)^{1/2}+(f_3prs_3)^{1/2}b^{-1/2}+(f_3pr)^{1/2}(s_3b)^{-1/2}\bigr.\\
&\qquad\qquad\qquad\qquad\bigl. +(f_3p)^{1/2}(rs_3b)^{-1/2}+f_3^{1/2}(prs_3b)^{-1/2}+(f_3prs_3b)^{-1/2}\bigr)\\
&+(f_2qs_2)^{1/2}(ab)^{-1/2}\bigl( (f_3prs_3)^{1/2}+(f_3pr)^{1/2}s_3^{-1/2}+(f_3p)^{1/2}(rs_3)^{-1/2}+f_3^{1/2}(prs_3)^{-1/2}+(f_3prs_3)^{-1/2}\bigr)\\
&+(f_2q)^{1/2}(s_2ars_3b)^{-1/2}\bigl( (f_3p)^{1/2}+f_3^{1/2}p^{-1/2}+(f_3p)^{-1/2}\bigr)\\
&+f_2^{1/2}(qs_2aprs_3b)^{-1/2}( f_3^{1/2}+f_3^{-1/2}) + (f_2qs_2af_3prs_3b )^{-1/2}
\end{align*}

\end{example}

\section{Groupoid of triangular forms and Teichm\"uller spaces of Riemann surfaces}

Groupoid structures are closely related to Teichm\"uller spaces $\mathcal T_{g,s}$ of genus $g$ Riemann surfaces with $s=1,2$ holes.

It is well-known (see \cite{ChF3}) that we can identify entries $a_{i,j}$ of $\mathbb A\in SL_n$  with \emph{geodesic functions}, $a_{i,j}=G_{i,j}$, where $G_{i,j}=e^{\ell_{i,j}/2}+e^{-\ell_{i,j}/2}$, for a special subset of geodesics loops $\gamma_{i,j}$ having lengths $\ell_{i,j}$ on a Poincar\'e uniformized Riemann surface of genus $g=\lfloor (n-1)/2 \rfloor$ with one hole ($s=1$) for odd $n$ and with two holes ($s=2$) for even $n$. Then the braid-group transformations $\mathbb A\mapsto B_{i,i+1}\mathbb A B^{\text{T}}_{i,i+1}$ can be interpreted as Dehn twists along geodesics $\gamma_{i,i+1}$ acting on the corresponding geodesic functions $G_{k,l}$. 

A known subtlety is that dimensions of $\mathcal A_n$ and of the related Teichm\"uller spaces $\mathcal T_{g,s}$ coincide only for $n=3$ and $n=4$; starting with $n=5$, $\mathcal T_{g,s}$ are embedded as non-maximal dimension Poisson leaves into  $\mathcal A_n$. So, we prefer to keep the notation $G_{i,j}$ for geodesic functions, although they coincide with matrix entries $a_{i,j}$ for real positive cluster variables only for $n=3,4$.

Geodesic functions enjoy the classical skein relations and semiclassical Poisson relations (Goldman brackets); using these two relations we can construct any other geodesic function for a smooth Riemann surface of genus $g>1$ out of $2g+1$ specially chosen geodesics.\footnote{up to a $\mathbb Z_2$-symmetry subtlety discussed below in Sec.~\ref{ss:Z2}.} Say, having $n-1$ geodesic functions $G_{i,i+1}$, $i=1,\dots,n-1$, we can construct all $G_{i,j}$: since $\gamma_{i,i+1}$ and $\gamma_{i+1,i+2}$ has a single intersection point, $G_{i,i+1}G_{i+1,i+2}=G_{i,i+2}+\widehat G_{i,i+2}$, where $\widehat G_{i,i+2}$ is the geodesic function not in our list, but $\{G_{i,i+1},G_{i+1,i+2}\}=\frac12 (G_{i,i+2}-\widehat G_{i,i+2})$, and therefore
$$
G_{i,i+2}=\frac12 G_{i,i+1}G_{i+1,i+2} +  \{G_{i,i+1},G_{i+1,i+2}\}.
$$
We can continue this process to construct all $G_{i,j}$.

It was proved in \cite{NR93} that $\mathbb{A}$ of the geometric leaf satisfies the condition $\operatorname{rank}(\mathbb{A}+\mathbb{A}^{\text{T}})\le 4$.
Geodesic functions of loops around holes of the surface form a complete set of Casimir functions of the Goldman Poisson bracket on $\mathcal T_{g,s}$. A complete set of Casimirs of the reflection Poisson bracket is given by the coefficients of powers of $\lambda$ of polynomial $\det(\mathbb{A}+\lambda\mathbb{A}^{\text{T}})$, or, equivalently, by the eigenvalues of $\mathbb{A}^\text{-T}\mathbb{A}$. By Theorem 4.4  \cite{ChM2}, for $\mathbb{A}$ in the geometric leaf the spectrum of $\mathbb{A}^\text{-T}\mathbb{A}$ consists of $\lambda,\lambda^{-1}, \mu,\mu^{-1}$ and $-1$'s for even $n$ and $\lambda,\lambda^{-1}$ and $-1$s for odd $n$. Hence there are maximum two independent eigenvalues for even $n$ and one independent eigenvalue for odd $n$. We can conclude that in the geometric case the algebra of Casimir function is generated by two generators for even $n$ and by one generator for odd $n$. The generators are described by the geodesic functions corresponding of two holes (for even $n$) and one  hole (for odd $n$) of the hyperbolic surface.

\section{Cluster variable description of closed genus-two Riemann surface.}

We now exploit the correspondence between entries of $\mathbb A$ and $\widetilde{\mathbb A}$ satisfying reflection equations (\ref{A-A}), (\ref{At-At}) and the commutation relation (\ref{A-At}) and geodesic functions of geodesics on Riemann surfaces satisfying $SL_2$ Goldman brackets and skein relations.  

\emph{Cluster mutations} $\mu_f$ are operations on directed graphs (quivers) and variables associated with vertices of a quiver; mutations transform a quiver in a standard way, whereas  variables are transformed as follows: for four typical cases in this paper,
$$
\begin{pspicture}(-1,-0.3)(1,0.7){\psset{unit=1}
\psline[linecolor=blue,linewidth=2pt]{->}(-0.4,0)(0.4,0)
%
\multiput(-0.5,0)(1,0){2}{\pscircle[fillstyle=solid,fillcolor=lightgray]{.1}}
\put(-0.5,0.15){\makebox(0,0)[bc]{\hbox{{$f$}}}}
\put(0.5,0.15){\makebox(0,0)[bc]{\hbox{{$a$}}}}
}
\end{pspicture}
\begin{pspicture}(-1,-0.3)(1,0.7){\psset{unit=1}
%
\psline[doubleline=true,linewidth=1pt, doublesep=1pt, linecolor=blue]{->}(-0.4,0)(0.4,0)
\multiput(-0.5,0)(1,0){2}{\pscircle[fillstyle=solid,fillcolor=lightgray]{.1}}
\put(-0.5,0.15){\makebox(0,0)[bc]{\hbox{{$f$}}}}
\put(0.5,0.15){\makebox(0,0)[bc]{\hbox{{$b$}}}}
}
\end{pspicture}
\begin{pspicture}(-1,-0.3)(1,0.7){\psset{unit=1}
\psline[linecolor=blue,linewidth=2pt]{<-}(-0.4,0)(0.4,0)
%
\multiput(-0.5,0)(1,0){2}{\pscircle[fillstyle=solid,fillcolor=lightgray]{.1}}
\put(-0.5,0.15){\makebox(0,0)[bc]{\hbox{{$f$}}}}
\put(0.5,0.15){\makebox(0,0)[bc]{\hbox{{$c$}}}}
}
\end{pspicture}
\begin{pspicture}(-1,-0.3)(1,0.7){\psset{unit=1}
%
\psline[doubleline=true,linewidth=1pt, doublesep=1pt, linecolor=blue]{<-}(-0.4,0)(0.4,0)
\multiput(-0.5,0)(1,0){2}{\pscircle[fillstyle=solid,fillcolor=lightgray]{.1}}
\put(-0.5,0.15){\makebox(0,0)[bc]{\hbox{{$f$}}}}
\put(0.5,0.15){\makebox(0,0)[bc]{\hbox{{$d$}}}}
}
\end{pspicture}
$$
we have that the mutation $\mu_f$ in the vertex $f$ results in transformations
$$
\mu_f\{f,a,b,c,d\}=\left\{f^{-1}, \ a(1+f^{-1})^{-1}, \ b(1+f^{-1})^{-2},\ c(1+f),\ d(1+f)^2  \right\}.
$$
For arrows of higher orders, the rule is clear.

Various convenient ways to represent the quiver for $SL_3$ are below:

$$
\begin{pspicture}(-1,-1.5)(1,1.3){\psset{unit=0.7}
\rput(-1.5,0.85){\psline[linecolor=blue,linewidth=2pt]{->}(0.1,0)(.9,0)}
\rput(-1,0){\psline[linecolor=blue,linewidth=2pt]{->}(0.1,0)(.9,0)}
\rput(0.5,-0.85){\psline[linecolor=blue,linewidth=2pt]{->}(-0.1,0)(-.9,0)}
\multiput(0,0)(0.5,-0.85){1}{\psline[linecolor=blue,linewidth=2pt]{->}(0.05,-0.085)(.45,-0.765)}
\multiput(-0.5,-.85)(-0.5,0.85){2}{\psline[linecolor=blue,linewidth=2pt]{->}(-0.05,0.085)(-.45,0.765)}
\multiput(0,0)(-0.5,0.85){1}{\psline[linecolor=blue,linewidth=2pt]{->}(-0.05,0.085)(-.45,0.765)}
\psbezier[linecolor=blue,linewidth=2pt]{<-}(-0.57,0.92)(-2,2.5)(-3,0)(-0.5,-0.85)
\psbezier[linecolor=blue,linewidth=2pt]{->}(-0.5,0.85)(0.5,0.85)(0.7,-0.2)(0.53,-0.76)
\psbezier[linecolor=white,linewidth=3pt]{<-}(-1.43,0.9)(0.5,2.5)(2,0)(0.5,-0.85)
\psbezier[linecolor=blue,linewidth=2pt]{<-}(-1.43,0.9)(0.5,2.5)(2,0)(0.5,-0.85)
%
\multiput(-0.5,0.85)(-0.5,-0.85){1}{\psline[linecolor=blue,linewidth=2pt]{->}(-0.05,-0.085)(-.45,-0.765)}
\multiput(-2.5,0.85)(0.5,-0.85){3}{
\multiput(1,0)(1,0){2}{\pscircle[fillstyle=solid,fillcolor=lightgray]{.1}}}
\put(-1.6,1){\makebox(0,0)[bc]{\hbox{{$f$}}}}
\put(-0.4,1){\makebox(0,0)[bc]{\hbox{{$e$}}}}
\rput(0,-1.5){\makebox(0,0)[tc]{\hbox{{``original''}}}}
}
\end{pspicture}
\begin{pspicture}(-1,-1.5)(1,1.3)
\rput(0,0.3){\makebox(0,0)[bc]{\hbox{{$\mu_f$}}}}
\psline[doubleline=true,linewidth=1pt, doublesep=1pt, linecolor=black]{->}(-0.5,0)(0.5,0)
\end{pspicture}
\begin{pspicture}(-1.5,-1.5)(1.5,1.3)
{
\multiput(-1.25,0.7)(1.25,0){3}{\pscircle[fillstyle=solid,fillcolor=blue]{.05}}
\multiput(-1.25,-0.7)(1.25,0){3}{\pscircle[fillstyle=solid,fillcolor=blue]{.05}}
\put(-1.3,-0.75){\makebox(0,0)[tr]{\hbox{{$c$}}}}
\put(-1.3,0.75){\makebox(0,0)[br]{\hbox{{$d$}}}}
\put(1.3,-0.75){\makebox(0,0)[tl]{\hbox{{$a$}}}}
\put(1.3,0.75){\makebox(0,0)[bl]{\hbox{{$b$}}}}
\put(0,0.8){\makebox(0,0)[bc]{\hbox{{$f$}}}}
\put(0,-0.8){\makebox(0,0)[tc]{\hbox{{$e$}}}}
\psline[linewidth=3pt, linecolor=white]{->}(1.15,0.7)(-1.15,-0.7)
\psline[linewidth=2pt, linecolor=blue]{->}(1.15,0.7)(-1.15,-0.7)
\psline[linewidth=3pt, linecolor=white]{->}(-1.15,0.7)(1.15,-0.7)
\psline[linewidth=2pt, linecolor=blue]{->}(-1.15,0.7)(1.15,-0.7)
\multiput(0,0)(-1.25,0){2}{\psline[linewidth=3pt, linecolor=white]{->}(1.13,0.6)(0.1,-0.6)
\psline[linewidth=2pt, linecolor=blue]{->}(1.13,0.6)(0.1,-0.6)}
\multiput(0,0)(1.25,0){2}{\psline[linewidth=3pt, linecolor=white]{->}(-1.13,0.6)(-0.1,-0.6)
\psline[linewidth=2pt, linecolor=blue]{->}(-1.13,0.6)(-0.1,-0.6)}
\multiput(-1.25,0)(1.25,0){3}{\psline[linewidth=3pt, linecolor=white]{->}(0,-0.6)(0,0.6)
\psline[linewidth=2pt, linecolor=blue]{->}(0,-0.6)(0,0.6)}
\rput(0,-1.2){\makebox(0,0)[tc]{\hbox{{$K_{3,3}$, or ``symmetric''}}}}
}
\end{pspicture}
\begin{pspicture}(-1,-1.5)(1,1.3)
\rput(0,0.3){\makebox(0,0)[bc]{\hbox{{$\mu_e\mu_f$}}}}
\psline[doubleline=true,linewidth=1pt, doublesep=1pt, linecolor=black]{->}(-0.5,0)(0.5,0)
\end{pspicture}
\begin{pspicture}(-1.5,-1.5)(1.5,1.3)
{
\multiput(-1.25,0.7)(1.25,0){3}{\pscircle[fillstyle=solid,fillcolor=blue]{.05}}
\multiput(-1.25,-0.7)(1.25,0){3}{\pscircle[fillstyle=solid,fillcolor=blue]{.05}}
\rput(-1.3,-0.75){\makebox(0,0)[tr]{\hbox{{$c$}}}}
\rput(-1.3,0.75){\makebox(0,0)[br]{\hbox{{$d$}}}}
\rput(1.3,-0.75){\makebox(0,0)[tl]{\hbox{{$a$}}}}
\rput(1.3,0.75){\makebox(0,0)[bl]{\hbox{{$b$}}}}
\rput(0,0.8){\makebox(0,0)[bc]{\hbox{{$f$}}}}
\rput(0,-0.8){\makebox(0,0)[tc]{\hbox{{$e$}}}}
\psline[linewidth=3pt, linecolor=white]{->}(1.15,0.7)(-1.15,-0.7)
\psline[doubleline=true,linewidth=1pt, doublesep=1pt, linecolor=blue]{->}(1.15,0.7)(-1.15,-0.7)
\psline[linewidth=3pt, linecolor=white]{->}(-1.15,0.7)(1.15,-0.7)
\psline[doubleline=true,linewidth=1pt, doublesep=1pt, linecolor=blue]{->}(-1.15,0.7)(1.15,-0.7)
\psline[linewidth=3pt, linecolor=white]{<-}(1.13,0.6)(0.1,-0.6)
\psline[linewidth=2pt, linecolor=blue]{<-}(1.13,0.6)(0.1,-0.6)
\psline[linewidth=3pt, linecolor=white]{<-}(-1.13,0.6)(-0.1,-0.6)
\psline[linewidth=2pt, linecolor=blue]{<-}(-1.13,0.6)(-0.1,-0.6)
\psline[linewidth=3pt, linecolor=white]{->}(-1.15,-0.7)(-0.1,-0.7)
\psline[linewidth=2pt, linecolor=blue]{->}(-1.15,-0.7)(-0.1,-0.7)
\psline[linewidth=3pt, linecolor=white]{->}(1.15,-0.7)(0.1,-0.7)
\psline[linewidth=2pt, linecolor=blue]{->}(1.15,-0.7)(0.1,-0.7)
\psline[linewidth=3pt, linecolor=white]{->}(0,-0.6)(0,0.6)
\psline[linewidth=2pt, linecolor=blue]{->}(0,-0.6)(0,0.6)
\rput(0,-1.2){\makebox(0,0)[tc]{\hbox{{``papillon''}}}}
}
\end{pspicture}
$$
Here the rightmost quiver is known as $X_6$ and it is one of a handful of mutation-finite nongeometrical quivers. (Recall that geometrical quivers are those dual to an ideal-triangle decompositions of (open) Riemann surfaces $\Sigma_{g,s}$ with the number of holes $s>0$ and $2g+s-2>0$.)

Let us introduce special ordered combinations (``telescopic sums'') of letters $a,b,c,d,e,f$ denoted by $\langle \dots \rangle$: for example
$$
\langle defa \rangle:=(defa)^{1/2}\left(1+\frac1d+\frac1{de}+\frac1{def}+\frac1{defa}\right).
$$
Then in the middle quiver ($K_{3,3}$) we have 
\begin{align*}
&G_{1,2}=\langle defa \rangle,\quad G_{1,3}=\langle bcde \rangle,\quad G_{2,3}=\langle fabc \rangle \\
&\widetilde G_{1,2}=\langle befc \rangle,\quad \widetilde G_{1,3}=\langle dabc \rangle,\quad \widetilde G_{2,3}=\langle fcda \rangle .
\end{align*}
After mutations: first in $f$, then in $e$, we obtain the right quiver (``papillon'').
It is convenient to introduce the notation
$$
\hat e:= e(abcdf)^{1/2}.
$$
Then the only variable with which $\hat e$ does not commute is $f$; the rule is that $\{ \hat e, f\}=2\hat e f$. 

After the above mutation, the geodesic functions in question become
\begin{align}
&G_{1,2}=(da)^{1/2}\left(1+\frac 1d+\frac1{da}\right)=\langle da\rangle \nonumber\\
&G_{1,3}=a^{-1/2}\hat e +(df)^{1/2}\widetilde G_{1,2}+\frac{a^{1/2}}{\hat e}(1+f)(1+d)\label{GA}\\
&G_{2,3}=d^{-1/2}\hat e \left( 1+\frac 1a \right)+(f/a)^{1/2} \widetilde G_{1,2}+\frac{d^{1/2}}{\hat e}(1+f)\nonumber
\end{align}
and
\begin{align}
&\widetilde G_{1,2}=(bc)^{1/2}\left(1+\frac 1b+\frac1{bc}\right)=\langle bc\rangle\nonumber\\
&\widetilde G_{1,3}=c^{-1/2}\hat e +(bf)^{1/2} G_{1,2}+\frac{c^{1/2}}{\hat e}(1+f)(1+b)\label{GtA}\\
&\widetilde G_{2,3}=b^{-1/2}\hat e \left( 1+\frac 1c \right)+(f/c)^{1/2} G_{1,2}+\frac{b^{1/2}}{\hat e}(1+f),\nonumber
\end{align}

Our novel interpretation of the $X_6$ quiver is that it parameterizes the Teichm\"uller space $\mathcal T_{2,0}$ of \emph{smooth} Riemann surfaces of genus two with one distinguished separating geodesics (the Markov element); we checked that cluster mutations correspond to Dehn twists along $G_{i,j}$ or $\widetilde G_{i,j}$ and leave the Markov element invariant.

Our next goal is to construct the full modular group of $\Sigma_{2,0}$ out of the elements of $X_6$. This task is seemingly possible because the total Poisson dimension of the $\{a,b,c,d,e,f\}$ is six and we therefore have elements not commuting with the Markov element $\mathfrak M$. We need to find the geodesic function $G_B$ (see Fig.~\ref{fi:genus-2}) such that the  \emph{twist} $\tau_B$ that is dual to $\ell_{\mathfrak M}$ and is given by the expression (see \cite{NRS}, \cite{Ch20})
\be\label{twist}
e^{\tau_B}+e^{-\tau_B}=\frac {\mathfrak M G_B-2G_{1,2}\widetilde G_{1,2}}{\sqrt{\bigl(\mathfrak M+G^2_{1,2}\bigr)\bigl(\mathfrak M+\widetilde G^2_{1,2}\bigr)}},
\ee
has the proper Poisson relations.  Here $\mathfrak M=e^{\ell_{{\mathfrak M}}/2}+e^{-\ell_{\mathfrak M}/2}-2$, where we identify $\ell_{\mathfrak M}$ with the hyperbolic length of the separating geodesics that commutes with all $G_{i,j}$ and $\widetilde G_{i,j}$ (but, of course, NOT with $G_B$)

\begin{figure}[H]
\begin{center}
\begin{pspicture}(-1.5,-5.5)(1.5,1)
{\psset{unit=0.7}
\newcommand{\ELLARC}[7]{
\parametricplot[linecolor=#1, linewidth=#2 pt, linestyle=#3]{#4}{#5}{#6 t cos mul #7 t sin mul}
}
\definecolor{lightblue}{rgb}{.85, .5, 1}
\newcommand{\PAT}{%
\rput(-1,-0.3){\ELLARC{black}{1}{solid}{-45}{225}{3.2}{2.5}}
\rput(-1,-0.9){\ELLARC{black}{1}{solid}{20}{160}{.9}{.4}}
\rput(-1,-0.3){\ELLARC{black}{1}{solid}{210}{330}{1.5}{.6}}
\rput(-4.8,1){\makebox(0,0){$\mathbb A$}}
\pscircle[linewidth=0.5pt](-4.8,1){.6}
\rput(-4.8,-6.5){\makebox(0,0){$\widetilde{\mathbb A}$}}
\pscircle[linewidth=0.5pt](-4.8,-6.5){.6}
\rput(-1,-2.75){\ELLARC{lightblue}{1}{dashed}{0}{180}{1.85}{.5}}
\rput(-1,-2.75){\ELLARC{lightblue}{2}{dashed}{180}{360}{1.85}{.5}}
\rput(-4.1,-2.75){\ELLARC{black}{1}{solid}{-45}{45}{1.2}{.94}}
\rput(2.1,-2.75){\ELLARC{black}{1}{solid}{135}{225}{1.2}{.94}}
\rput(-1,-5.2){\ELLARC{black}{1}{solid}{-225}{45}{3.2}{2.5}}
\rput(-1,-0.9){\ELLARC{black}{1}{solid}{20}{160}{.9}{.4}}
\rput(-1,-0.3){\ELLARC{black}{1}{solid}{210}{330}{1.5}{.6}}
\rput(-1,-5.8){\ELLARC{black}{1}{solid}{20}{160}{.9}{.4}}
\rput(-1,-5.2){\ELLARC{black}{1}{solid}{210}{330}{1.5}{.6}}
}
\rput(4,0){
\rput(-0.1,0){\ELLARC{green}{1}{dashed}{45}{210}{1.9}{1.9}}
\rput(-0.1,0){\ELLARC{green}{1}{solid}{-150}{45}{1.9}{1.9}}
\rput(-1,-0.6){\ELLARC{blue}{1}{solid}{0}{360}{2}{0.8}}
\rput{-90}(-1,.85){\ELLARC{red}{1}{solid}{180}{360}{1.3}{0.5}}
\rput{-90}(-1,.85){\ELLARC{red}{1}{dashed}{0}{180}{1.3}{0.5}}
\rput(0.1,-5.7){\ELLARC{green}{1}{dashed}{180}{360}{2}{1}}
\rput(0.1,-5.7){\ELLARC{green}{1}{solid}{0}{180}{2}{1}}
\rput(-1,-5.6){\ELLARC{blue}{1}{solid}{0}{360}{2}{0.8}}
\rput{-90}(-1,-6.75){\ELLARC{red}{1}{solid}{180}{360}{0.95}{0.35}}
\rput{-90}(-1,-6.75){\ELLARC{red}{1}{dashed}{0}{180}{0.95}{0.35}}
\rput(0,0){\PAT}
\rput(-2,1.5){\makebox(0,0){$G_{1,2}$}}
\rput(-3.5,-.6){\makebox(0,0){$G_{2,3}$}}
\rput(1,.7){\makebox(0,0){$G_{1,3}$}}
\rput(-3.5,-2.75){\makebox(0,0){$\mathfrak M$}}
\rput(-1.9,-7){\makebox(0,0){$\widetilde G_{1,2}$}}
\rput(-3.5,-5.1){\makebox(0,0){$\widetilde G_{2,3}$}}
\rput(1,-4.3){\makebox(0,0){$\widetilde G_{1,3}$}}
\rput(-2.1,-4){\makebox(0,0){$G_{B}$}}
\rput(-1,-3.15){\ELLARC{red}{1}{solid}{90}{270}{0.7}{2.23}}
\rput(-1,-3.15){\ELLARC{red}{1}{dashed}{-90}{90}{0.7}{2.23}}
}
%
%
}
\end{pspicture}
\end{center}
\caption{\small
A smooth Riemann surface of genus two with sets of geodesics (labeled by the corresponding geodesic functions): the separating geodesic is the Markov element $\mathfrak M$; geodesics corresponding to $\mathbb A$ are located in the upper part (a torus with a hole, represented by the Markov element), geodesics corresponding to $\widetilde{\mathbb A}$ are located in the lower part; these two sets of geodesic functions obviously commute. A missed geodesic that we need to construct the twist variable dual to the Markov element $\mathfrak M$ is $G_B$.
}
\label{fi:genus-2}
\end{figure}

To find a candidate for $G_B$ we explore different expressions for the Markov element $\mathfrak M$. Two original expressions are 
\begin{align}
\mathfrak M&= G_{1,2}G_{1,3}G_{2,3}-G^2_{1,2}-G^2_{1,3}-G^2_{2,3}\label{M3-1}\\
&= \widetilde G_{1,2}\widetilde G_{1,3}\widetilde G_{2,3}-\widetilde G^2_{1,2}-\widetilde G^2_{1,3}-\widetilde G^2_{2,3}.\label{M3-2}
\end{align}
When we substitute expressions from (\ref{GA}) and (\ref{GtA}) into this expression collecting wherever possible terms constituting $G_{1,2}$ and $\widetilde G_{1,2}$, we find that one combination plays a prominent role; this combination is
\be\label{GB}
G_B:=\frac1{f^{1/2}}\left(\hat e+\frac{1+f}{\hat e}\right).
\ee
Then the \emph{Markov element} $\mathfrak M$ can be written as
\be\label{A3}
\mathfrak M= f\bigl( G_{1,2}\widetilde G_{1,2}G_{B} + G^2_{1,2} + \widetilde G^2_{1,2} + G^2_{B}-4\bigr)-4,
\ee
and since $G_B$ commutes with $G_{1,2}$ and $\widetilde G_{1,2}$, it is only the presence of the variable $f$ in $\mathfrak M$ that makes commutation relations between $\mathfrak M$ and $G_B$ nonzero. 
Substituting ~(\ref{GB}) in (\ref{twist}), let us evaluate
\begin{align*}
&\{e^{\tau_B}+e^{-\tau_B},e^{\ell_A/2}+e^{-\ell_A/2}\}=\frac{\mathfrak M}{\sqrt{(\mathfrak M+G_{1,2})(\mathfrak M+\widetilde G_{1,2})}}\{G_B,\mathfrak M\}\\
&=\frac{\mathfrak M}{\sqrt{(\mathfrak M+G_{1,2})(\mathfrak M+\widetilde G_{1,2})}}\left[e(abcd)^{1/2}-\frac 1{fe(abcd)^{1/2}}-\frac1{e(abcd)^{1/2}}\right]f\cdot \bigl( G_{1,2}\widetilde G_{1,2}G_{B} + G^2_{1,2} + \widetilde G^2_{1,2} + G^2_{B}-4\bigr) \\
&=\sqrt{G_B^2-4-4/f}\frac{(\mathfrak M+4)\mathfrak M}{\sqrt{(\mathfrak M+G_{1,2})(\mathfrak M+\widetilde G_{1,2})}}\\
&=\sqrt{G_B^2-4-4\frac{G_{1,2}\widetilde G_{1,2}G_{B} + G^2_{1,2} + \widetilde G^2_{1,2} + G^2_{B}-4}{\mathfrak M+4}}\frac{(\mathfrak M+4)\mathfrak M}{\sqrt{(\mathfrak M+G_{1,2})(\mathfrak M+\widetilde G_{1,2})}}\\
&=\frac{\sqrt{\mathfrak M(\mathfrak M+4)}}{\sqrt{(\mathfrak M+G_{1,2})(\mathfrak M+\widetilde G_{1,2})}}{\sqrt{(\mathfrak MG_B-2G_{1,2}\widetilde G_{1,2})^2-4(\mathfrak M+G_{1,2})(\mathfrak M+\widetilde G_{1,2})}}\\
&=\sqrt{(\mathfrak M+2)^2-4}\sqrt{\left[ \frac{\mathfrak MG_B-2G_{1,2}\widetilde G_{1,2}}{\sqrt{(\mathfrak M+G_{1,2})(\mathfrak M+\widetilde G_{1,2})}} \right]^2-4}\\
&=\sqrt{(e^{\ell_{\mathfrak M}/2}+e^{-\ell_{\mathfrak M}/2})^2-4}\sqrt{(e^{\tau_B}+e^{-\tau_B})^2-4}=(e^{\ell_{\mathfrak M}/2}-e^{-\ell_{\mathfrak M}/2})(e^{\tau_B}-e^{-\tau_B}),
\end{align*}
which implies $\{\tau_B,\ell_{\mathfrak M}/2 \}=1$.

A longer but still straightforward calculation, which we omit, demonstrates that the twists $\tau_{1,2}$ and $\widetilde \tau_{1,2}$ along the geodesics $\gamma_{1,2}$ and $\widetilde\gamma_{1,2}$ determined by the formulas
\be\label{twist-lezginka}
(e^{\tau_{1,2}}+e^{-\tau_{1,2}})^2=-4+\frac{G_{2,3}^2(G_{1,2}^2-4)}{\mathfrak M+G_{1,2}^2},\qquad
(e^{\widetilde\tau_{1,2}}+e^{-\widetilde\tau_{1,2}})^2=-4+\frac{\widetilde G_{2,3}^2(\widetilde G_{1,2}^2-4)}{\mathfrak M+\widetilde G_{1,2}^2}
\ee
commute with $\tau_B$. The quantity $\tau_B$ satisfies therefore all determining properties of a twist variable dual to $\ell_{\mathfrak M}$.

The next and (almost) final step of the construction is to find cluster variable  transformations that correspond to twists along $G_B$. Note that these transformations are not mutations of $X_6$ since the latter preserve the Markov element. All elements $G_{i,j}$ and $\widetilde G_{i,j}$ must nevertheless remain Laurent polynomials upon these transformations.

The new (``extended'') mutations that correspond to the twist along $G_B$ in the $X_6$ quiver are
\be\label{mut-ex}
e\to e\left(1+\frac 1f\right)^{-1},\quad f^{1/2}\to \frac{f^{1/2}+f^{-1/2}}{e(abcd)^{1/2}}.
\ee
These transformations preserve commutation relations in the ``papillon'' quiver, so the quiver is not changed.


Note that the transformation of $f$ in~(\ref{mut-ex}) is not a standard cluster mutation. However, the following extension of quiver $X_6$ to quiver $X_7$ of finite mutation type ~\cite{FST12} provides a framework for cluster interpretation of this "extended" mutation. Namely, the transformation~(\ref{mut-ex}) is a standard cluster mutation in $X_7$ under condition that seven cluster coordinates in $X_7$ satisfy relation (\ref{Cas}).

{\bf Extended quiver $X_7$.} Consider a ``papillon'' quiver with a new vertex added:
\be\label{X7}
\begin{pspicture}(-1.5,-1.5)(1.5,1.5)
{\psset{unit=1.5}
\newcommand{\PAT}{%
\rput(-0.5,0.85){\pscircle[fillstyle=solid,fillcolor=blue]{.1}}
\rput(0.5,0.85){\pscircle[fillstyle=solid,fillcolor=blue]{.1}}
\psline[linewidth=2pt, linecolor=blue]{<-}(-0.05,0.09)(-0.45,0.76)
\psline[doubleline=true,linewidth=1pt, doublesep=1pt, linecolor=blue]{->}(0.4,0.85)(-0.4,0.85)
\psline[linewidth=2pt, linecolor=blue]{->}(0.05,0.09)(0.45,0.76)
}
\rput(0,0){\pscircle[fillstyle=solid,fillcolor=blue]{.1}}
\rput(-0.5,0.85){\pscircle[fillstyle=solid,fillcolor=red]{.1}}
\rput(0.5,0.85){\pscircle[fillstyle=solid,fillcolor=blue]{.1}}
\psline[linewidth=2pt, linecolor=red]{<-}(-0.05,0.09)(-0.45,0.76)
\psline[doubleline=true,linewidth=1pt, doublesep=1pt, linecolor=red]{->}(0.4,0.85)(-0.4,0.85)
\psline[linewidth=2pt, linecolor=blue]{->}(0.05,0.09)(0.45,0.76)
\rput{120}(0,0){\PAT}
\rput{240}(0,0){\PAT}
\rput(1.15,0){\makebox(0,0)[cl]{\hbox{{$c$}}}}
\rput(-1.15,0){\makebox(0,0)[cr]{\hbox{{$d$}}}}
\rput(-0.5,-1){\makebox(0,0)[tc]{\hbox{{$a$}}}}
\rput(0.5,-1){\makebox(0,0)[tc]{\hbox{{$b$}}}}
\rput(0.65,0.85){\makebox(0,0)[cl]{\hbox{{$f$}}}}
\rput(-0.2,0.1){\makebox(0,0)[br]{\hbox{{$e$}}}}
\rput(-0.65,0.85){\makebox(0,0)[cr]{\hbox{{$g$}}}}
}
\end{pspicture}
\ee
Adding the new vertex does not change the Poisson dimension because we now have a new Casimir operator. We impose the condition that this Casimir is equal the unity:
\be\label{Cas}
e^2abcdfg =1.
\ee

Then the above ``extended'' transformation is just the mutation $\mu_g$ at the newly added vertex $g$
followed by the operation $\mathfrak S_{f,g}$ of interchanging $f\leftrightarrow g$. Indeed, we have:
$$
\mathfrak S_{f,g}\mu_g(g,f^{1/2},e)=\mathfrak S_{f,g} \bigl(g^{-1}, f^{1/2}(1+g), e(1+g^{-1})^{-1}\bigr)= \bigl(f^{-1}, g^{1/2}(1+f), e(1+f^{-1})^{-1}\bigr),
$$
and, upon substitution $g=e^{-1}(abcdf)^{-1/2}$, we obtain transformation (\ref{mut-ex}) for $f^{1/2}$ and $e$.

Note that we can always express $g$ via other variables using (\ref{Cas}); e.g., the above $\hat e$ is nothing but $g^{-1/2}$.

We now formulate the theorem that gives a \emph{complete cluster-algebra description of the Teichm\"uller space of smooth Riemann surfaces of genus two}.

\begin{theorem}\label{genus-2}
The Teichm\"uller space $\mathcal T_{2,0}$ of \emph{smooth} Riemann surfaces of genus two in the Poincar\'e uniformization is identified with the space of real positive cluster variables $\{a,b,c,d,e,f,g\}$ of the $X_7$ quiver (\ref{X7}) with the restriction (\ref{Cas}) imposed. Then, 
\begin{itemize}
\item all geodesic functions are elements of an upper cluster algebra (positive Laurent polynomials) of the $X_7$ quiver variables generated, using skein and Poisson relations, by five elements:
\begin{align*}
&G_{1,2}=(da)^{1/2}\left(1+\frac 1d+\frac1{da}\right),\qquad \widetilde G_{1,2}=(bc)^{1/2}\left(1+\frac 1b+\frac1{bc}\right), \qquad G_B=(fg)^{1/2}\left(1+\frac 1f+\frac1{fg}\right)\\
&G_{2,3}=(gd)^{-1/2} \left( 1+1/a \right)+(f/a)^{1/2} \widetilde G_{1,2}+(gd)^{1/2}(1+f),\\
&\widetilde G_{2,3}=(gb)^{-1/2} \left( 1+1/c \right)+(f/c)^{1/2} G_{1,2}+(gb)^{1/2}(1+f)
\end{align*}
\item every sequence of mutations preserving the graph (\ref{X7}) is a combination of Dehn twists and symmetry transformations in $\Sigma_{2,0}$. All these mutations and Dehn twists are Poisson morphisms of both the $X_7$ cluster algebra and of the Goldman Poisson algebra of geodesic functions.
\end{itemize}
\end{theorem}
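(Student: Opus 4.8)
The plan is to prove the two bulleted claims in turn, leaning on the geometric picture of Figure~\ref{fi:genus-2}: a closed genus-two surface is two one-holed tori glued along the separating geodesic $\mathfrak M$, the upper torus carrying the geodesics of $\mathbb A$ (namely $G_{1,2},G_{1,3},G_{2,3}$) and the lower one those of $\widetilde{\mathbb A}$. I would begin with a dimension count: imposing the Casimir condition (\ref{Cas}) on the seven variables of $X_7$ leaves a six-dimensional Poisson manifold, matching $\dim_{\mathbb R}\mathcal T_{2,0}=6g-6=6$. The identification itself is then assembled from pieces already in hand: the groupoid theorem produces from $B=T_2T_1$ the pair $(\mathbb A,\widetilde{\mathbb A})$ whose Markov elements coincide (their shared value being a Casimir of the reflection bracket, as in Lemma~\ref{uni} and the $n=3$ discussion), so the two tori glue; the inequalities $a_{i,j}>2$ and $\widetilde a_{i,j}>2$ guarantee that for real positive cluster variables all loops are genuinely hyperbolic; and the computation establishing $\{\tau_B,\ell_{\mathfrak M}/2\}=1$ shows that $\tau_B$ is a bona fide Fenchel--Nielsen twist conjugate to $\ell_{\mathfrak M}$, supplying the last gluing coordinate. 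Together these yield a bijection between positive real $X_7$-points modulo (\ref{Cas}) and $\mathcal T_{2,0}$.

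For the first bullet I would verify that the five listed functions form a generating set and then invoke the skein/Poisson recursion. Since $\gamma_{1,2}$ and $\gamma_{2,3}$ meet once, the pair $\{G_{1,2},G_{2,3}\}$ generates all geodesic functions of the upper one-holed torus via $G_{i,i+2}=\tfrac12 G_{i,i+1}G_{i+1,i+2}+\{G_{i,i+1},G_{i+1,i+2}\}$, and likewise $\{\widetilde G_{1,2},\widetilde G_{2,3}\}$ generates those of the lower one; adjoining $G_B$, the single remaining loop crossing $\mathfrak M$, gives $2g+1=5$ suitably chosen loops, which suffice on a genus-two surface. The explicit Laurent expressions quoted in the statement are read off from the network path-sums of Fig.~\ref{fi:A-net} (for $G_{1,2},\widetilde G_{1,2},G_{2,3},\widetilde G_{2,3}$) and from the definition (\ref{GB}) rewritten through $\hat e=g^{-1/2}$ (for $G_B$). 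Membership in the upper cluster algebra then follows from the Laurent phenomenon together with the observation that the skein recursion is a polynomial operation in the already-Laurent generators; positivity is manifest from the path-sum, every term being a monomial with half-integer exponents and unit coefficient.

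For the second bullet the key is the correspondence between graph-preserving mutations and mapping classes. The standard mutations of the $X_6$ sub-quiver were already checked to realize Dehn twists along the $\gamma_{i,j}$ and $\widetilde\gamma_{i,j}$, and these fix $\mathfrak M$; the new ingredient is that the extended transformation (\ref{mut-ex}), which does move $\tau_B$, equals the honest cluster mutation $\mu_g$ followed by the relabelling $\mathfrak S_{f,g}$ in $X_7$ under (\ref{Cas}), as computed just before the theorem. Interpreting this as the Dehn twist along $\gamma_B$ is justified by the Poisson computation $\{\tau_B,\ell_{\mathfrak M}/2\}=1$ and by the commutation of $\tau_B$ with $\tau_{1,2},\widetilde\tau_{1,2}$ in (\ref{twist-lezginka}). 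Since mutations preserve the log-canonical cluster bracket and, under the identification, this bracket is the Goldman bracket (by the reflection-equation relations (\ref{A-A})--(\ref{A-At}) of Section~2 and the explicit twist bracket above), every such transformation is simultaneously a Poisson morphism of both algebras; the $\mathbb Z_2$ symmetry exchanging the two tori completes the list of generators.

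The main obstacle is establishing that the group generated by the graph-preserving mutations of $X_7$ together with the swap symmetry is exactly the mapping class group of $\Sigma_{2,0}$ --- that the Dehn twists so produced form a complete generating set and satisfy the defining relations --- and, correlatively, that the extended mutation is \emph{genuinely} the geometric Dehn twist along $\gamma_B$ rather than merely a transformation sharing its Poisson bracket with $\ell_{\mathfrak M}$. Matching the cluster action against a known generating system of five twists (Humphries/Lickorish type) on \emph{every} geodesic function, and not only on the five distinguished ones, is where the real work lies.
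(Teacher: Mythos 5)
Your outline is genuinely different from the paper's proof, but it has a real gap at its center, and it is the same gap in both bullets. For the identification claim you offer a dimension count, the coincidence of the two Markov elements, and the bracket $\{\tau_B,\ell_{\mathfrak M}/2\}=1$, and then conclude ``together these yield a bijection.'' That establishes only that the candidate map lands in a six-dimensional family carrying the right Fenchel--Nielsen-type bracket; it does not show that an arbitrary positive point of $X_7$ subject to (\ref{Cas}) actually produces a hyperbolic structure on a \emph{closed} genus-two surface, nor that every such structure arises. The paper's proof is precisely this missing reconstruction: from the five functions $G_{1,2},G_{2,3},G_B,\widetilde G_{2,3},\widetilde G_{1,2}$ it builds an explicit flat $PSL_2(\mathbb R)$-connection via transport matrices $M_1,\dots,M_5$ along the paths of Fig.~\ref{fi:pathsAB}, normalizing $M_1=\mathrm{Id}$, $M_2$ diagonal, $M_3$ symmetric, solving the resulting linear systems for the entries (with the Markov combinations controlling the off-diagonal data, e.g.\ $b^2=({\mathfrak M}_{123}+4)/(G_{1,2}^2-4)$), and then verifying the one residual consistency relation $dk+gh-ej-if=G_{4,5}$ and the triviality of the total monodromy $M_5^{-1}M_4M_3^{-1}M_2M_1^{-1}M_5M_4^{-1}M_3M_2^{-1}M_1=\mathrm{Id}$ (both checked symbolically). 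Without this step, or an equivalent injectivity/surjectivity argument, the first bullet is asserted rather than proved; note also that your count of generators silently uses the relation $\gamma_B=\gamma_{1,2}\widetilde\gamma_{1,2}$, i.e.\ $M_{\widetilde\gamma_{1,2}}=M_{\gamma_{1,2}}^{-1}M_{\gamma_B}$, which is exactly what the matrix model makes precise.

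The second gap you identify yourself: whether the graph-preserving mutations of (\ref{X7}), together with the extended transformation (\ref{mut-ex}), realize the full mapping class group and not merely transformations with the right brackets against $\ell_{\mathfrak M}$ and against the twists (\ref{twist-lezginka}). In the paper this is not left open: the Lemma inside Remark~\ref{rmrk:ModularRelations} verifies by symbolic computation that the cluster transformations corresponding to the five Dehn twists satisfy the Birman--Hilden presentation of $\operatorname{Mod}(S_2)$ --- the braid and commutation relations, the chain relation $(d_1d_2d_3)^4=d_5^2$, the hyperelliptic relations, and the fact that $(d_1d_2d_3d_4d_5)^2$ acts as an order-three permutation of the cluster variables. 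Matching a complete presentation on the cluster side, combined with the explicit $2\times2$ representation, is what lets the paper conclude that mutations exhaust Dehn twists and symmetries; your Fenchel--Nielsen gluing route would need an independent substitute for both ingredients, and as written it supplies neither.
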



\begin{remark}\label{rmrk:ModularRelations} Due to Birman--Hilden the modular group $\operatorname{Mod}(S_2)$ has the following presentation. Let $d_1,d_2,d_3,d_4,d_5$ denote the Dehn twists about the curves shown on the Figure~\ref{fi:genus-2} with geodesic functions $G_{1,2},G_{2,3},G_B,\tilde G_{2,3},\tilde G_{1,2}$. Then,
\begin{center}
  \begin{tabular}{@{} ll @{}}
    $\operatorname{Mod}(S_2)=\langle d_1,d_2,d_3,d_4,d_5\quad |$  & $d_i d_j=d_j d_i$ \text{ for } $|i-j|>1$, \\ 
     & $d_i d_{i+1} d_i=d_{i+1}d_i d_{i+1}$, \\ 
     & $(d_1 d_2 d_3)^4=d_5^2$, \\ 
     & $[(d_5d_4d_3d_2d_1d_1d_2d_3d_4d_5),d_1]=1$, \\ 
     & $(d_5d_4d_3d_2d_1d_1d_2d_3d_4d_5)^2=1\rangle$.
\end{tabular}
\end{center}

Equivalently, 
\begin{center}
  \begin{tabular}{@{} ll @{}}
    $\operatorname{Mod}(S_2)=\langle d_1,d_2,d_3,d_4,d_5\quad |$  & $d_i d_j=d_j d_i$ \text{ for } $|i-j|>1$, \\ 
     & $d_i d_{i+1} d_i=d_{i+1}d_i d_{i+1}$, \\ 
     & $(d_1 d_2 d_3 d_4 d_5)^6=1$, \\ 
     & $(d_5d_4d_3d_2d_1d_1d_2d_3d_4d_5)^2=1,
     \rangle$.
\end{tabular} 
\end{center}

\begin{lemma} Cluster transformations in $X_7$  corresponding to the Dehn twists  $G_{1,2},G_{1,3},G_B,\widetilde G_{2,3},\widetilde G_{1,2}$ satisfy relations~(\ref{rmrk:ModularRelations})
\end{lemma}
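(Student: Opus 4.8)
To organize the argument I will treat each Dehn twist $d_i$ as the explicit mutation sequence that realizes it on $X_7$. By Theorem~\ref{genus-2} each such sequence preserves the quiver $X_7$ and acts as a Poisson automorphism of the cluster algebra, hence is completely determined by its induced substitution on the seven variables $\{a,b,c,d,e,f,g\}$ subject to the Casimir constraint (\ref{Cas}); equivalently, by its action on the five generators $G_{1,2},\widetilde G_{1,2},G_B,G_{2,3},\widetilde G_{2,3}$. The plan is therefore to reduce each defining relation of Remark~\ref{rmrk:ModularRelations} to an identity between composite cluster transformations, which can be checked on these generators: since the geodesic functions separate points of $\mathcal T_{2,0}$ and are generated from the five listed functions by the skein and Goldman relations, an identity verified on the generating set is an identity of cluster transformations.

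First I would dispose of the two ``local'' families. The relations $d_id_j=d_jd_i$ for $|i-j|>1$ follow from disjointness of the underlying geodesics: the corresponding mutation words act on non-overlapping sub-quivers of $X_7$ and hence commute as operators on the cluster variables. The braid relations $d_id_{i+1}d_i=d_{i+1}d_id_{i+1}$ correspond to pairs of geodesics meeting once; here I would invoke the standard realization of a single Dehn twist as the elementary mutation-supported half-twist and check directly, using the closed forms (\ref{GA}), (\ref{GtA}) and (\ref{GB}), that the two triple products induce the same substitution. Both verifications are finite and mechanical once the single-twist mutation words are spelled out.

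The substantive content, and the main obstacle, is the pair of global relations: the chain relation $(d_1d_2d_3)^4=d_5^2$ and the hyperelliptic relations $(d_5d_4d_3d_2d_1d_1d_2d_3d_4d_5)^2=1$ together with the centrality $[(d_5d_4d_3d_2d_1d_1d_2d_3d_4d_5),d_1]=1$. My strategy is to isolate the hyperelliptic involution $\iota:=d_5d_4d_3d_2d_1^2d_2d_3d_4d_5$ and identify its cluster realization with an explicit finite-order symmetry of $X_7$, namely a quiver automorphism combined with the exchange $f\leftrightarrow g$ forced by (\ref{Cas}) and by the antidiagonal involution $S$ used throughout the groupoid construction. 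Once $\iota$ is shown to act as an involutive cluster symmetry, $\iota^2=1$ is immediate and $[\iota,d_1]=1$ reduces to the statement that this symmetry fixes the sub-quiver carrying $d_1$. The chain relation remains the hardest single computation: I would verify it by tracking the action of $(d_1d_2d_3)^4$ and of $d_5^2$ on the five generators and reducing both to the same rational expression in $\{a,b,c,d,e,f,g\}$ with the aid of (\ref{Cas}). The difficulty here is one of bookkeeping over long mutation words rather than of principle.

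Finally, I would observe that the second, equivalent presentation is cheaper on the cluster side: modulo the chain relation the word $(d_1d_2d_3d_4d_5)^6$ equals $\iota^2$, so once $\iota$ is recognized as an involution the relation $(d_1d_2d_3d_4d_5)^6=1$ follows without any further long computation. In all cases the separation of points of $\mathcal T_{2,0}$ by geodesic functions guarantees that checking a relation on the five generators is sufficient, so the entire proof is a sequence of finite, if lengthy, verifications of cluster-variable substitutions.
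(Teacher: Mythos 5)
Your overall strategy---realize each $d_i$ as an explicit mutation word on $X_7$ and reduce each relation of Remark~\ref{rmrk:ModularRelations} to a finite verification of variable substitutions---is the same in spirit as the paper's, which settles everything by direct symbolic computation on the seven cluster variables. But two of your structural shortcuts are wrong in substance. First, the commutations $d_id_j=d_jd_i$ for $|i-j|>1$ do \emph{not} follow from disjointness of sub-quivers: in $X_7$ the central vertex $e$ is adjacent to all three wings and $f$ couples the wings as well, so the mutation words realizing, say, $d_1$ (the $\{d,a\}$ wing) and $d_3$ (the twist along $G_B$, which is $\mathfrak S_{f,g}\mu_g$ and touches $e,f,g$) both modify $e$ and $f$. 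Their supports overlap, and commutativity is a genuine computation, not a formality; the paper makes no claim of disjointness and checks relations by symbolic calculation throughout.

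Second, and more importantly, your identification of the hyperelliptic word $\iota=d_5d_4d_3d_2d_1^2d_2d_3d_4d_5$ with a \emph{nontrivial} involutive symmetry of $X_7$ (a quiver automorphism composed with $f\leftrightarrow g$) is incorrect: the paper's symbolic computation shows that this word is exactly the identity as a cluster transformation, $d_5d_4d_3d_2d_1d_1d_2d_3d_4d_5=\operatorname{Id}$. This is forced geometrically---the hyperelliptic involution acts trivially on $\mathcal T_{2,0}$, and the cluster realization lives on Teichm\"uller space---and it makes both $\iota^2=1$ and $[\iota,d_1]=1$ trivial at once, with no need for your reduction to ``the symmetry fixes the sub-quiver carrying $d_1$.'' A nontrivial quiver symmetry does appear in the paper's proof, but elsewhere: $(d_1d_2d_3d_4d_5)^2$ realizes the order-$3$ permutation $(a,b,c,d,e,f,g)\mapsto(c,f,g,b,e,d,a)$ of the cluster variables, which is how $(d_1d_2d_3d_4d_5)^6=1$ is established. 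This last point also exposes a soft spot in your sufficiency argument: equality of cluster transformations is a statement about substitutions on $\{a,\dots,g\}$, and since words can act by nontrivial permutations of the variables, checking agreement only on the five geodesic-function generators proves a relation on $\mathcal T_{2,0}$ but transfers to the cluster level only through the injectivity of the identification in Theorem~\ref{genus-2}; the paper avoids this subtlety by computing directly on the cluster variables, where the chain relation $(d_1d_2d_3)^4=d_5^2$ is also verified head-on.
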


\begin{proof}
We checked using symbolic calculations that mutations satisfy relation $d_5 d_4 d_3 d_2 d_1 d_1 d_2 d_3 d_4 d_5=\rm{Id}$.
This implies the last two relations in the first set of relations.
The third relation of the first set is checked directly using symbolic computations.
Symbolic calculations show also that $(d_1 d_2 d_3 d_4 d_5)^2$ realizes permutation $\begin{pmatrix} a& b & c & d & e & f & g \\
c & f & g & b & e & d & a
\end{pmatrix}$. of order $3$. This proves the third relation of the second set. 
\end{proof}

\end{remark}

\begin{remark}
Note that since the modular group transformation is a Poisson morphism, Casimir operators are always preserved by this action, In particular, in any other cluster having the $X_7$ form, the combination (\ref{Cas}) expressed in the transformed variables will be equal to the unity.
\end{remark}

\begin{remark}
Expressions for geodesic functions for separating geodesics are amazingly long when expressed in terms of cluster variables. Depending on the choice of initial cluster, the Laurent polynomial for $\mathfrak M$ comprises $46$ terms for $X_7$ and $50$ terms for both $K_{3,3}$ and for the ``original'' cluster. This is in stark contrast to geometrical clusters describing surfaces with holes where expressions for boundary geodesic functions always contain just two terms, and the lengths of the corresponding geodesics (which are Casimirs) are linear sums of logarithms of cluster variables. 
\end{remark}

\begin{proof}
To proof Theorem for each collection of geodesic functions $G_{1,2},G_{2,3},G_B,\tilde G_{2,3},\tilde G_{1,2}$ we will construct the set of $2\times 2$ matrices representing  these elements. We fix two points $\alpha$ and $\beta$ on the surface and consider the vector bundle over the surface with fiber ${\mathbb R}^2$. Let  ${\mathbb R}^2_\alpha$ be the fiber over $\alpha$, ${\mathbb R}^2_\beta$ be the fiber over $\beta$. A flat $PSL_2({\mathbb R})$-connection associates with every path $\gamma$ connecting $\alpha$ to $\beta$ a transport operator $T_\gamma:{\mathbb R}_\alpha^2\to{\mathbb R}_\beta^2$. Choosing bases in ${\mathbb R}_\alpha^2$ and ${\mathbb R}_\beta^2$ we obtain matrix $M_\gamma$.

Changes of bases 
$C_\alpha:{\mathbb R}^2_\alpha\to {\mathbb R}^2_\alpha$,  $C_\beta:{\mathbb R}^2_\beta\to {\mathbb R}^2_\beta$ lead to the 
action $M_\gamma\mapsto C_\beta^{-1}M_\gamma C_\alpha$. If $\gamma_1$, $\gamma_2$ are two paths connecting $\alpha$ to 
$\beta$ then
we associate the matrix $M_{\gamma_1} M_{\gamma_2}^{-1}$ to a loop bases at $\beta$ which follows $\gamma_2$ in the opposite direction and then $\gamma_1$. Clearly, this construction defines a representation of the fundamental group of the surface with the basepoint $\beta$.  Let us denote by $\gamma_{1,2},\gamma_{2,3},\gamma_B,\tilde\gamma_{2,3},\tilde\gamma_{1,2}$ the elements in the fundamental group represented by the cycles shown on the left part of  Figure~\ref{fi:genus-234}. Note that such cycles are not independent. 
Namely,  $\gamma_B=\gamma_{1,2}\tilde\gamma_{1,2}$, and hence, 
$M_{\tilde\gamma_{1,2}}=M_{\gamma_{1,2}}^{-1}M_{\gamma_B}$. 

Below we denote by $M_i$ the matrix $M_{\gamma_i}$.
In order to construct such representation, we are looking for five $2\times 2$ matrices $M_i$ each matrix representing parallel transport $M_{\gamma_i}$ satisfying $G_{i,j}=\operatorname{tr}\left(M_i M_j^{-1}\right)$ (see Fig.\ref{fi:pathsAB}) for genus two.  As we discussed above, the collection of matrices $M_1,\dots, M_5$ is determined up to left and right multiplications by nondegenerate $2\times 2$ matrices.

%

\begin{figure}
\psscalebox{1.0 1.0} 
{
\begin{pspicture}(-3,-5.)(7.5,-1.)
\psarc[linecolor=black, linewidth=0.08, dimen=outer](5.1725054,0.16383636){1.6625}{225.0}{315.0}
\psrotate(5.1225057, -5.3961635){-180.50157}{\psarc[linecolor=black, linewidth=0.08, dimen=outer](5.1225057,-5.3961635){1.6925}{225.0}{315.0}}
\psrotate(2.3200054, -2.5661635){-178.32751}{\psarc[linecolor=black, linewidth=0.08, dimen=outer](2.3200054,-2.5661635){2.3}{218.52817}{134.82762}}
\psarc[linecolor=black, linewidth=0.08, dimen=outer](7.9800053,-2.6261637){2.3}{221.64696}{137.28438}
\psrotate(2.5975056, -1.3811636){2.6401985}{\psarc[linecolor=black, linewidth=0.08, dimen=outer](2.5975056,-1.3811636){1.4875}{222.19043}{314.8429}}
\psrotate(2.6050055, -2.9761636){5.510523}{\psarc[linecolor=black, linewidth=0.08, dimen=outer](2.6050055,-2.9761636){0.775}{12.195461}{149.88597}}
\psrotate(7.9175053, -1.3411636){2.6401985}{\psarc[linecolor=black, linewidth=0.08, dimen=outer](7.9175053,-1.3411636){1.4875}{222.19043}{314.8429}}
\psrotate(7.8850055, -2.8961637){5.510523}{\psarc[linecolor=black, linewidth=0.08, dimen=outer](7.8850055,-2.8961637){0.775}{11.692449}{149.88597}}
\pscustom[linecolor=black, linewidth=0.04]
{
\newpath
\moveto(3.8200054,-8.496163)
}
\psbezier[linecolor=green, linewidth=0.04](5.1600056,-3.7161636)(4.6003366,-3.5473044)(4.5769477,-3.7096674)(4.2800055,-3.676163635253906)(3.9830632,-3.6426597)(3.4003365,-3.6673043)(2.9600055,-3.4561636)(2.5196745,-3.245023)(2.4030633,-3.2026596)(2.0000055,-2.7761636)
\psbezier[linecolor=green, linewidth=0.04, linestyle=dashed, dash=0.17638889cm 0.10583334cm](1.9400055,-2.7161636)(1.7925351,-2.507913)(1.414809,-2.0544438)(1.2400055,-1.9161636352539062)(1.065202,-1.7778835)(0.532535,-1.627913)(0.6200055,-1.2561636)(0.70747596,-0.88441426)(0.70520204,-1.0578835)(0.9000055,-0.7961636)
\psbezier[linecolor=green, linewidth=0.04, arrowsize=0.05291667cm 3.0,arrowlength=3.0,arrowinset=0.0]{->}(0.9200055,-0.75616366)(1.7852474,-0.33878)(2.5004573,-0.57461953)(3.1000054,-0.8761636352539063)(3.6,-1.1777078)(3.6652474,-1.13878)(4.0000057,-1.2961637)(4.3347635,-1.4535472)(4.699554,-1.5)(5.,-1.5161636)
\psbezier[linecolor=blue, linewidth=0.04, arrowsize=0.05291667cm 3.0,arrowlength=3.0,arrowinset=0.0]{->}(5.0600057,-3.6961637)(3.818827,-3.7569213)(2.699954,-3.71634)(2.0000055,-3.536163635253906)(1.3000569,-3.3559873)(0.8816182,-3.013381)(0.8800055,-2.4761636)(0.8783928,-1.9389464)(1.7717702,-1.6691048)(2.6800056,-1.7161636)(3.5,-1.7632225)(3.2225873,-1.7079778)(3.7000055,-1.7)(4.1,-1.75)(4.5137076,-1.7)(5.,-1.5361637)
\psbezier[linecolor=magenta, linewidth=0.04](5.1400056,-3.6561637)(3.6800056,-3.6161637)(2.3800056,-3.3161635)(2.6200056,-2.9361636352539064)
\psbezier[linecolor=cyan, linewidth=0.04](5.1600056,-3.6161637)(4.5600057,-3.5761635)(2.7200055,-3.3161635)(3.0400054,-2.8161636352539063)
\psbezier[linecolor=orange, linewidth=0.04](5.1400056,-3.5961637)(4.7200055,-3.5161636)(3.0600054,-3.2561636)(3.2800055,-2.6961636352539062)
\psbezier[linecolor=red, linewidth=0.04, arrowsize=0.05291667cm 3.0,arrowlength=3.0,arrowinset=0.0]{->}(5.1600056,-3.7161636)
(4.5,-3.)(4.5,-2.3)(5.1200056,-1.5361637)
\psbezier[linecolor=orange, linewidth=0.04, linestyle=dashed, dash=0.17638889cm 0.10583334cm](3.3200054,-2.6761637)(4.2644258,-2.0022411)(4.700094,-2.3428314)(5.0400057,-2.4361636352539064)(5.3799167,-2.5294957)(5.904426,-2.722241)(6.1800056,-2.7961636)(6.455585,-2.8700862)(7.0399165,-2.849496)(7.1800056,-2.5761635)
\psbezier[linecolor=cyan, linewidth=0.04, linestyle=dashed, dash=0.17638889cm 0.10583334cm](3.1000054,-2.8161635)(2.6201158,-2.9110134)(3.8926752,-2.6574855)(4.5000057,-2.7961636352539063)(5.107336,-2.9348416)(5.920116,-3.2910135)(6.4600053,-3.3561637)(6.999895,-3.4213138)(8.107336,-3.1348417)(8.540006,-2.7361636)
\psbezier[linecolor=magenta, linewidth=0.04, linestyle=dashed, dash=0.17638889cm 0.10583334cm](2.7200055,-2.8561637)(3.5419528,-3.078542)(3.84324,-3.0618522)(4.4600053,-3.176163635253906)(5.076771,-3.2904751)(6.361953,-3.818542)(7.4800053,-3.7761636)(8.598058,-3.7337854)(10.256771,-3.570475)(10.220005,-2.8161635)
\psbezier[linecolor=orange, linewidth=0.04, arrowsize=0.05291667cm 3.0,arrowlength=3.0,arrowinset=0.0]{->}(7.1600056,-2.5361636)(6.965696,-1.9201038)(6.7379956,-1.9961789)(6.1000056,-1.9961636352539063)(5.462015,-1.9961483)(5.588966,-1.8822379)(5.1600056,-1.5561637)
\psbezier[linecolor=cyan, linewidth=0.04, arrowsize=0.05291667cm 3.0,arrowlength=3.0,arrowinset=0.0]{->}(8.620006,-2.6161637)(8.8546,-2.339995)(8.490371,-1.9933827)(8.340006,-1.8961636352539062)(8.18964,-1.7989446)(7.8146005,-1.799995)(7.3800054,-1.7561636)(6.9454107,-1.7123322)(6.18964,-1.8589447)(5.1400056,-1.5161636)
\psbezier[linecolor=magenta, linewidth=0.04, arrowsize=0.05291667cm 3.0,arrowlength=3.0,arrowinset=0.0]{->}(10.2400055,-2.8361635)(10.07779,-2.1696446)(9.026431,-1.6131102)(8.060005,-1.3561636352539062)(7.09358,-1.0992172)(6.3818874,-1.5336627)(5.2000055,-1.4961636)
\pscircle[linecolor=black, linewidth=0.08, fillstyle=solid, dimen=outer](5.1600056,-3.6761637){0.2}
\pscircle[linecolor=black, linewidth=0.08, fillstyle=solid, dimen=outer](5.1200056,-1.5361637){0.2}
\rput[bl](4.9400053,-4.2561636){$\alpha$}
\rput[bl](4.9800053,-1.1561637){$\beta$}
\rput[bl](1.3000054,-0.95616364){$\gamma_1$}
\rput[bl](1.9000055,-1.6361636){$\gamma_2$}
\rput[bl](4.7,-2.8){$\gamma_3$}
\rput[bl](5.8200054,-2.3561637){$\gamma_4$}
\rput[bl](7.1000056,-2.0761635){$\gamma_5$}
\rput[bl](7.8200054,-1.2361636){$\gamma_6$}
\end{pspicture}
}
\caption{\small The hyperbolic genus 2 surface is equipped with natural $PSL_2$-connection. The choice of bases in two-dimensional fibers ${\mathbb R}^2_\alpha$ and  ${\mathbb R}^2_\beta$ at points $\alpha$ and $\beta$ associates to every path connecting $\alpha$ to $\beta$ a $2\times 2$ matrix.  $M_1,\dots,M_6$ are matrices of parallel transports along paths $\gamma_1,\dots,\gamma_6$ connecting points $\alpha$ and $\beta$. The matrices $M_i M_{i+1}^{-1}$ for $i=1\dots 6$ give parallel transport along the loops with geodesic functions $G_{1,2},G_{2,3}, G_B, \widetilde G_{2,3},\widetilde G_{1,2}$ (see Fig.~\ref{fi:genus-2}) being their traces. Collection $M_1,\dots,M_5$ up to a diagonal conjugation determines a flat $PSL_2$ connection on a genus two surface.}
\label{fi:pathsAB}
\end{figure}
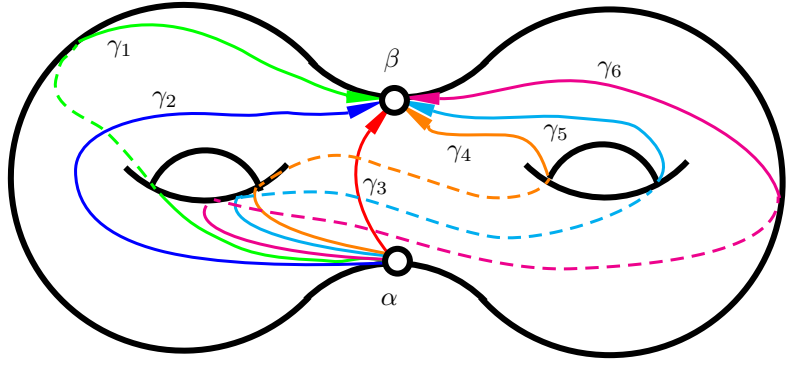

%
%
%

We choose bases in ${\mathbb R}^2_\alpha$ and ${\mathbb R}^2_\beta$ in such a way that that $M_1=\begin{pmatrix}1 & 0\\ 0& 1\end{pmatrix}$, $M_2=\begin{pmatrix} e^{\ell_{1,2}/2} & 0\\ 0& e^{-\ell_{1,2}/2}\end{pmatrix}$,
$M_3=\begin{pmatrix} a & b\\  b& c\end{pmatrix}$. These conditions exhaust all the freedom in choice of bases and we set   $M_4=\begin{pmatrix} d & e\\  f& g\end{pmatrix}$, $M_5=\begin{pmatrix} h & i\\  j& k\end{pmatrix}$, where parameters $a,b,c,d,e,f,g,h,i,j,k$ satisfy conditions $ac-b^2=1, dg-ef=1, kh-ij=1$.  
Note that parameter $|\ell_{1,2}|$ is determined by $G_{1,2}$ and $\ell_{1,2}$ is therefore completely determined by $G_{1,2}$ up to a sign.
By definition, the geodesic functions satisfy
\begin{align*}
&G_{1,3}=a+c, \quad G_{1,4}=d+g,\quad  G_{1,5}=h+k, \quad  G_{2,3}=e^{\ell_{1,2}/2} c+e^{-\ell_{1,2}/2}a,\\
&G_{2,4}=e^{\ell_{1,2}/2} g+e^{-\ell_{1,2}/2} d, \quad G_{2,5}=e^{\ell_{1,2}/2} k+e^{-\ell_{1,2}/2} h.
\end{align*}
These relations give a system of six linear equations for $a,b,c,d,g,h,k$ whose solution is
\[
\begin{array}{cc}
 c=\dfrac{G_{2,3}-G_{1,3}e^{-\ell_{1,2}/2}}{e^{\ell_{1,2}/2}-e^{-\ell_{1,2}/2}} &    a=\dfrac{-G_{2,3}+G_{1,3}e^{\ell_{1,2}/2}}{e^{\ell_{1,2}/2}-e^{-\ell_{1,2}/2}}   \\
 g=\dfrac{G_{2,4}-G_{1,4}e^{-\ell_{1,2}/2}}{e^{\ell_{1,2}/2}-e^{-\ell_{1,2}/2}} &    d=\dfrac{-G_{2,4}+G_{1,4}e^{\ell_{1,2}/2}}{e^{\ell_{1,2}/2}-e^{-\ell_{1,2}/2}}   \\
 k=\dfrac{G_{2,5}-G_{1,5}e^{-\ell_{1,2}/2}}{e^{\ell_{1,2}/2}-e^{-\ell_{1,2}/2}} &    h=\dfrac{-G_{2,5}+G_{1,5}e^{\ell_{1,2}/2}}{e^{\ell_{1,2}/2}-e^{-\ell_{1,2}/2}}   \\    
\end{array}
\]
Let us now express the remaining parameters $b,e,f,i,j$ using
relations 
\[
\begin{array}{c}
G_{3,4}=tr M_3 M_4^{-1} =a g - b f - b e +c d \\
G_{3,5} =tr M_3 M_5^{-1}= a k -b(i+j)+c h \\
G_{4,5}=tr M_4 M_5^{-1}=d k -e j -i f +g h \\
\end{array}
\]
Let ${\mathfrak M}_{abc}=G_{a,b} G_{b,c} G_{a,c}-G_{a,b}^2-G_{b,c}^2-G_{a,c}^2$.
Notice 
\[b^2=a c-1=\dfrac{(e^{\ell_{1,2}/2}+e^{-\ell_{1,2}/2})G_{1,3}G_{2,3}-G_{2,3}^2-G_{1,3}^2}{(e^{\ell_{1,2}/2}-e^{-\ell_{1,2}/2})^2}-1=\dfrac{G_{1,2}G_{2,3}G_{1,3}-G_{1,2}^2-G_{2,3}^2-G_{1,3}^2+4}{(e^{\ell_{1,2}/2}-e^{-\ell_{1,2}/2})^2}=\dfrac{\M_{123}+4}{G_{1,2}^2-4}
\]
Hence, $b=\left[\dfrac{{\mathfrak M}_{123}+4}{G_{1,2}^2-4}\right]^{1/2}$. Similarly, $$ef=\dfrac{{\mathfrak M}_{124}+4}{G_{1,2}^2-4}$$
 and 
 $$(f+e)b=ag+cd-G_{3,4}=\dfrac{2G_{2,3}G_{2,4}-G_{1,2}G_{1,3}G_{2,4}-G_{1,2}G_{2,3}G_{1,4}+G_{1,3}G_{1,4}(G_{1,2}^2-2)-G_{3,4}G_{1,2}^2+4G_{3,4}}{G_{1,2}^2-4}$$
 Parameter $f$ can be found from the latter equation and $e$ from the former one.\newline
 Parameters $i,j$ are found from equations
 $$ij=\dfrac{{\mathfrak M}_{125}+4}{G_{1,2}^2-4}$$
 and 
 $$(i+j)b=\dfrac{2G_{2,3}G_{2,5}-G_{1,2}G_{1,3}G_{2,5}-G_{1,2}G_{2,3}G_{1,5}+G_{1,3}G_{1,5}(G_{1,2}^2-2)-G_{3,5}G_{1,2}^2+4G_{3,5}}{G_{1,2}^2-4}$$ 
These equations allow to find all parameters $a,b,c,d,e,f,g,h,i,j,k$.
The last equation $dk+gh-ej-if=G_{4,5}$ checked by computer simulations gives the consistency condition for $G_{1,2}$, $G_{1,3}$, $G_{1,4}$, $G_{1,5}$, $G_{2,3}$, $G_{2,4}$, $G_{2,5}$, $G_{3,4}$, $G_{3,5}$, $G_{4,5}$. We have also checked the condition of triviality of monodromy along the composition of commutators of $A$- and $B$-cycles given in this case by the following matrix product:
$$
M_5^{-1}M_4M_3^{-1}M_2M_1^{-1}M_5M_4^{-1}M_3M_2^{-1}M_1=I,
$$
which completes the proof.
\end{proof}

\section{Cluster variable description of higher genus closed Riemann surfaces}

In this section, we discuss extensions of our construction to higher genus surfaces. Note that for genus three, we obtained the description of the finite quotient of Teichm\"uller space $\mathcal{T}_{3,0}/\mathbb{Z}_2$. For genus $g>3$ the construction requires Hamiltonian reduction, which is discussed for $n=3$ (as a toy example) and $n=5$. 

In Figure~\ref{fi:genus-234} we present patterns for surfaces of genera 2,3, and 4.



\begin{figure}[H]
\begin{pspicture}(-2.5,-5)(2.5,2)
{\psset{unit=0.5}
\newcommand{\ELLARC}[7]{
\parametricplot[linecolor=#1, linewidth=#2 pt, linestyle=#3]{#4}{#5}{#6 t cos mul #7 t sin mul}
}
\definecolor{lightblue}{rgb}{.85, .5, 1}
\definecolor{darkgreen}{rgb}{.0, .5, 1}
\newcommand{\PAT}{%
\psline[linewidth =0.2,linestyle=dotted,linecolor=darkgreen](-1,3)(-1,-9)
\rput(-1,-0.3){\ELLARC{black}{1}{solid}{-45}{225}{3.2}{2.5}}
\rput(-1,-0.9){\ELLARC{black}{1}{solid}{20}{160}{.9}{.4}}
\rput(-1,-0.3){\ELLARC{black}{1}{solid}{210}{330}{1.5}{.6}}
\rput(-4.8,1){\makebox(0,0){$\mathbb A$}}
\pscircle[linewidth=0.5pt](-4.8,1){.6}
\rput(-4.8,-6.5){\makebox(0,0){$\widetilde{\mathbb A}$}}
\pscircle[linewidth=0.5pt](-4.8,-6.5){.6}
\rput(-1,-2.75){\ELLARC{lightblue}{1}{dashed}{0}{180}{1.85}{.5}}
\rput(-1,-2.75){\ELLARC{lightblue}{2}{dashed}{180}{360}{1.85}{.5}}
\rput(-4.1,-2.75){\ELLARC{black}{1}{solid}{-45}{45}{1.2}{.94}}
\rput(2.1,-2.75){\ELLARC{black}{1}{solid}{135}{225}{1.2}{.94}}
\rput(-1,-5.2){\ELLARC{black}{1}{solid}{-225}{45}{3.2}{2.5}}
\rput(-1,-0.9){\ELLARC{black}{1}{solid}{20}{160}{.9}{.4}}
\rput(-1,-0.3){\ELLARC{black}{1}{solid}{210}{330}{1.5}{.6}}
\rput(-1,-5.8){\ELLARC{black}{1}{solid}{20}{160}{.9}{.4}}
\rput(-1,-5.2){\ELLARC{black}{1}{solid}{210}{330}{1.5}{.6}}
}
%
\rput(-1,-0.6){\ELLARC{blue}{1}{solid}{0}{360}{2}{0.8}}
\rput{-90}(-1,.85){\ELLARC{red}{1}{solid}{180}{360}{1.3}{0.5}}
\rput{-90}(-1,.85){\ELLARC{red}{1}{dashed}{0}{180}{1.3}{0.5}}
%
\rput(-1,-5.6){\ELLARC{blue}{1}{solid}{0}{360}{2}{0.8}}
\rput{-90}(-1,-6.75){\ELLARC{red}{1}{solid}{180}{360}{0.95}{0.35}}
\rput{-90}(-1,-6.75){\ELLARC{red}{1}{dashed}{0}{180}{0.95}{0.35}}
\rput(0,0){\PAT}
\rput(-2.2,1){\makebox(0,0){$G_{1,2}$}}
\rput(-3.4,0){\makebox(0,0){$G_{2,3}$}}
\rput(-3.5,-2.75){\makebox(0,0){$\mathfrak M$}}
\rput(-2,-7){\makebox(0,0){$\widetilde G_{1,2}$}}
\rput(-3.3,-4.7){\makebox(0,0){$\widetilde G_{2,3}$}}
\rput(-2.2,-4){\makebox(0,0){$G_{B}$}}
\rput(-1,-3.15){\ELLARC{red}{1}{solid}{90}{270}{0.7}{2.23}}
\rput(-1,-3.15){\ELLARC{red}{1}{dashed}{-90}{90}{0.7}{2.23}}
}
\end{pspicture}
\begin{pspicture}(-2.5,-5)(2.5,2)
{\psset{unit=0.5}
\newcommand{\ELLARC}[7]{
\parametricplot[linecolor=#1, linewidth=#2 pt, linestyle=#3]{#4}{#5}{#6 t cos mul #7 t sin mul}
}
\definecolor{lightblue}{rgb}{.85, .5, 1}
\definecolor{darkgreen}{rgb}{.0, .5, 1}
\newcommand{\PAT}{%
\psline[linewidth =0.2,linestyle=dotted,linecolor=darkgreen](0,3)(0,-9)
\rput(0,-0.3){\ELLARC{black}{1}{solid}{-45}{225}{4.2}{2.5}}
\rput(-4,-2.75){\ELLARC{black}{1}{solid}{-45}{45}{1.4}{.97}}
\rput(4,-2.75){\ELLARC{black}{1}{solid}{135}{225}{1.4}{.97}}
\rput(0,-0.9){\ELLARC{black}{1}{solid}{20}{160}{.9}{.4}}
\rput(0,-2.75){\ELLARC{black}{1}{solid}{0}{360}{1}{0.8}}
\rput(0,-0.3){\ELLARC{black}{1}{solid}{210}{330}{1.5}{.6}}
\rput(0,-5.2){\ELLARC{black}{1}{solid}{-225}{45}{4.2}{2.5}}
\rput(0,-5.8){\ELLARC{black}{1}{solid}{20}{160}{.9}{.4}}
\rput(0,-5.2){\ELLARC{black}{1}{solid}{210}{330}{1.5}{.6}}
\rput(-4.8,1){\makebox(0,0){$\mathbb A$}}
\pscircle[linewidth=0.5pt](-4.8,1){.6}
\rput(-4.8,-6.5){\makebox(0,0){$\widetilde{\mathbb A}$}}
\pscircle[linewidth=0.5pt](-4.8,-6.5){.6}
\rput(-1.83,-2.75){\ELLARC{lightblue}{1}{dashed}{0}{180}{0.78}{.3}}
\rput(-1.83,-2.75){\ELLARC{lightblue}{2}{dashed}{180}{360}{0.78}{.3}}
\rput(1.83,-2.75){\ELLARC{lightblue}{1}{dashed}{0}{180}{0.78}{.3}}
\rput(1.83,-2.75){\ELLARC{lightblue}{2}{dashed}{180}{360}{0.78}{.3}}
}
%
\rput(0,-0.6){\ELLARC{blue}{1}{solid}{0}{360}{2}{0.8}}
\rput{-90}(0,.85){\ELLARC{red}{1}{solid}{180}{360}{1.3}{0.5}}
\rput{-90}(0,.85){\ELLARC{red}{1}{dashed}{0}{180}{1.3}{0.5}}
\rput{-90}(0,-1.4){\ELLARC{red}{1}{solid}{180}{360}{0.5}{0.2}}
\rput{-90}(0,-1.4){\ELLARC{red}{1}{dashed}{0}{180}{0.5}{0.2}}
%
\rput(0,-5.6){\ELLARC{blue}{1}{solid}{0}{360}{2}{0.8}}
\rput{-90}(0,-6.75){\ELLARC{red}{1}{solid}{180}{360}{0.95}{0.35}}
\rput{-90}(0,-6.75){\ELLARC{red}{1}{dashed}{0}{180}{0.95}{0.35}}
\rput{-90}(0,-4.45){\ELLARC{red}{1}{solid}{180}{360}{0.9}{0.35}}
\rput{-90}(0,-4.45){\ELLARC{red}{1}{dashed}{0}{180}{0.9}{0.35}}
\rput(0,0){\PAT}
\rput(-1.2,1.5){\makebox(0,0){$G_{1,2}$}}
\rput(-2.7,-.6){\makebox(0,0){$G_{2,3}$}}
\rput(-1.5,-1.6){\makebox(0,0){$G_{3,4}$}}
\rput(-3.5,-2.75){\makebox(0,0){$\mathfrak M_1$}}
\rput(3.5,-2.75){\makebox(0,0){$\mathfrak M_2$}}
\rput(-1.1,-7){\makebox(0,0){$\widetilde G_{1,2}$}}
\rput(-2.7,-5.1){\makebox(0,0){$\widetilde G_{2,3}$}}
\rput(-1.2,-4.1){\makebox(0,0){$\widetilde G_{3,4}$}}
\rput(1.1,-4){\makebox(0,0){$G_{B}$}}
\rput(0,-2.75){\ELLARC{blue}{1}{solid}{0}{360}{1.4}{1.1}}
}
\end{pspicture}
\begin{pspicture}(-2.5,-5)(2.5,2)
{\psset{unit=0.5}
\newcommand{\ELLARC}[7]{
\parametricplot[linecolor=#1, linewidth=#2 pt, linestyle=#3]{#4}{#5}{#6 t cos mul #7 t sin mul}
}
\definecolor{lightblue}{rgb}{.85, .5, 1}
\definecolor{darkgreen}{rgb}{.0, .5, 1}
\newcommand{\PAT}{%
\psline[linewidth =0.2,linestyle=dotted,linecolor=darkgreen](0,5)(0,-10.5)
\rput(0,0.7){\ELLARC{black}{1}{solid}{-45}{225}{3.2}{3.5}}
\rput(-3.1,-2.75){\ELLARC{black}{1}{solid}{-45}{45}{1.2}{1.35}}
\rput(3.1,-2.75){\ELLARC{black}{1}{solid}{135}{225}{1.2}{1.35}}
\rput(0,-6.2){\ELLARC{black}{1}{solid}{-225}{45}{3.2}{3.5}}
\rput(0,-0.9){\ELLARC{black}{1}{solid}{20}{160}{.9}{.4}}
\rput(0,-0.3){\ELLARC{black}{1}{solid}{210}{330}{1.5}{.6}}
\rput(0,-5.8){\ELLARC{black}{1}{solid}{20}{160}{.9}{.4}}
\rput(0,-5.2){\ELLARC{black}{1}{solid}{210}{330}{1.5}{.6}}
\rput(0,2.8){\psset{unit=0.7} 
\rput(0,-0.9){\ELLARC{black}{1}{solid}{20}{160}{.9}{.4}}
\rput(0,-0.3){\ELLARC{black}{1}{solid}{210}{330}{1.5}{.6}}
}
\rput(0,-7.4){\psset{unit=0.7} 
\rput(0,-0.9){\ELLARC{black}{1}{solid}{20}{160}{.9}{.4}}
\rput(0,-0.3){\ELLARC{black}{1}{solid}{210}{330}{1.5}{.6}}
}
\rput(0,-2.75){\ELLARC{lightblue}{1}{dashed}{0}{180}{1.85}{.5}}
\rput(0,-2.75){\ELLARC{lightblue}{2}{dashed}{180}{360}{1.85}{.5}}
\rput(-3.8,2){\makebox(0,0){$\mathbb A$}}
\pscircle[linewidth=0.5pt](-3.8,2){.6}
\rput(-3.8,-7.5){\makebox(0,0){$\widetilde{\mathbb A}$}}
\pscircle[linewidth=0.5pt](-3.8,-7.5){.6}
}
%
\rput(0,-0.6){\ELLARC{blue}{1}{solid}{0}{360}{2}{0.8}}
\rput(0,2.35){\ELLARC{blue}{1}{solid}{0}{360}{1.5}{0.6}}
\rput{-90}(0,.85){\ELLARC{red}{1}{solid}{180}{360}{1.3}{0.5}}
\rput{-90}(0,.85){\ELLARC{red}{1}{dashed}{0}{180}{1.3}{0.5}}
\rput{-90}(0,3.33){\ELLARC{red}{1}{solid}{180}{360}{0.83}{0.3}}
\rput{-90}(0,3.33){\ELLARC{red}{1}{dashed}{0}{180}{0.83}{0.3}}
%
\rput(0,-5.6){\ELLARC{blue}{1}{solid}{0}{360}{2}{0.8}}
\rput(0,-7.8){\ELLARC{blue}{1}{solid}{0}{360}{1.5}{0.6}}
\rput{-90}(0,-6.75){\ELLARC{red}{1}{solid}{180}{360}{0.95}{0.35}}
\rput{-90}(0,-6.75){\ELLARC{red}{1}{dashed}{0}{180}{0.95}{0.35}}
\rput{-90}(0,-8.85){\ELLARC{red}{1}{solid}{180}{360}{0.8}{0.3}}
\rput{-90}(0,-8.85){\ELLARC{red}{1}{dashed}{0}{180}{0.8}{0.3}}
\rput(0,0){\PAT}
\rput(-1,3.4){\makebox(0,0){\small $G_{1,2}$}}
\rput(-2,2){\makebox(0,0){\small $G_{2,3}$}}
\rput(-1.2,0.9){\makebox(0,0){\small $G_{3,4}$}}
\rput(-2.4,0.1){\makebox(0,0){\small $G_{4,5}$}}
\rput(-2.5,-2.75){\makebox(0,0){\small $\mathfrak M$}}
\rput(-1,-8.8){\makebox(0,0){\small $\widetilde G_{1,2}$}}
\rput(-2.1,-7.5){\makebox(0,0){\small $\widetilde G_{2,3}$}}
\rput(-1,-6.8){\makebox(0,0){\small $\widetilde G_{3,4}$}}
\rput(-2.2,-4.7){\makebox(0,0){\small $\widetilde G_{4,5}$}}
\rput(-1.2,-4){\makebox(0,0){\small $G_{B}$}}
\rput(0,-3.15){\ELLARC{red}{1}{solid}{90}{270}{0.7}{2.23}}
\rput(0,-3.15){\ELLARC{red}{1}{dashed}{-90}{90}{0.7}{2.23}}
}
\end{pspicture}
\caption{\small
Smooth Riemann surfaces of genus two, three and four with sets of geodesics $\gamma_{i,i+1}$ and $\widetilde \gamma_{i,i+1}$ (labeled by the corresponding geodesic functions) that generate the corresponding subsets $\mathbb A$ and $\widetilde{\mathbb A}$ of geodesic functions; the corresponding Casimir elements separate these two subsets. In each case we have to add one more geodesic with the geodesic function $G_B$ to produce the complete set of algebraic elements that generate all other geodesic functions using Goldman brackets and skein relations. Vertical dotted axes are the axes of ${\mathbb Z}_2$-symmetry.
}
\label{fi:genus-234}
\end{figure}

\subsection{Genus three}\label{ss:genus3}
In this section we give cluster expressions for geodesic functions of a genus three surface.

The matrix elements $a_{i,i+1}$, $\widetilde a_{i,i+1}$, which we identify with the geodesic functions $G_{i,i+1}$, $\widetilde G_{i,i+1}$ are
\begin{align*}
&G_{1,2}=\langle b_3a_3d_2c_3 \rangle,\quad G_{2,3}=\langle c_3c_2b_2a_2d_3 \rangle, \quad G_{3,4}=\langle d_3d_2d_1c_1b_1a_1 \rangle,\\
&\widetilde G_{1,2}=\langle d_1a_3b_2c_1 \rangle,\quad \widetilde G_{2,3}=\langle c_1c_2d_2a_2b_1 \rangle, \quad \widetilde G_{3,4}=\langle b_1b_2b_3c_3d_3a_1 \rangle,
\end{align*}
with $|G_{1,2}|=5$, $|G_{2,3}|=6$, and $|G_{3,4}|=7$.  For example, the action of mutations on $G_{1,2}$ gives
\begin{align*}
&\mu_{a_1}G_{1,2}=G_{1,2}=\langle b_3a_3d_2c_3 \rangle,
& \mu_{a_2}G_{1,2}=\langle b_3a_3a_2d_2c_3 \rangle,\\
&\mu_{a_3}G_{1,2}=\langle b_3d_2c_3 \rangle,
&\mu_{b_1}G_{1,2}=G_{1,2}=\langle b_3a_3d_2c_3 \rangle,\\
& \mu_{b_2}G_{1,2}=\langle b_3b_2a_3d_2c_3 \rangle,
&\mu_{b_3}G_{1,2}=\langle a_3d_2c_3b_3 \rangle,\\
&\mu_{c_1}G_{1,2}=G_{1,2}=\langle b_3a_3d_2c_3 \rangle,
& \mu_{c_2}G_{1,2}=\langle b_3a_3d_2c_2c_3 \rangle,\\
& \mu_{c_3}G_{1,2}=\langle c_3b_3a_3d_2 \rangle,
& \mu_{d_1}G_{1,2}=\langle b_3a_3d_1d_2c_3 \rangle,\\
&\mu_{d_2}G_{1,2}=\langle b_3a_3c_3 \rangle,
& \mu_{d_3}G_{1,2}=\langle b_3a_3d_2d_3c_3 \rangle.
\end{align*}

We now transform the original quiver to a more symmetric one: for this, we perform four mutations: $ \mu_{d_1} \mu_{c_2}\mu_{b_3}$ and $ \mu_{a_1}$ (in arbitrary order since those are commuting). The resulting quiver has a Moebius strip-like structure depicted on the right-hand side of Fig.~\ref{fi:SL4}.

Expressions for the same geodesic functions become more symmetric in the quiver in the right-hand side:
\begin{align*}
&G_{1,2}=\langle a_3d_1d_2c_2c_3b_3 \rangle,\quad G_{2,3}=\langle c_3b_3b_2a_2a_1d_3 \rangle, \quad G_{3,4}=\langle a_1d_3d_2c_2c_1b_1 \rangle,\\
&\widetilde G_{1,2}=\langle a_3b_3b_2c_2c_1d_1 \rangle,\quad \widetilde G_{2,3}=\langle c_3d_3d_2a_2a_3b_3 \rangle, \quad \widetilde G_{3,4}=\langle a_1b_1b_2c_2c_3d_3 \rangle,
\end{align*}
with all $|G_{i,i+1}|=7$.

Geodesic functions $\mathfrak M_1$ and $\mathfrak M_2$ corresponding to two separating cycles are determined by algebraic relations
\begin{align}
&\mathfrak M_1+\mathfrak M_2=G_{1,3}G_{2,4}-G_{1,2}G_{3,4}-G_{2,3}G_{1,4}\\
&\mathfrak M_1\mathfrak M_2=G_{1,2}G_{2,3}G_{3,4}G_{1,4}-G_{1,2}G_{2,3}G_{1,3}-G_{2,3}G_{3,4}G_{2,4}-G_{1,2}G_{1,4}G_{2,4}-G_{3,4}G_{1,4}G_{1,3}+\sum_{1\le i<j\le 4}G_{i,j}^2-4
\end{align}
The first relation is obtained as Pfaffian of the matrix $\mathbb A-\mathbb A^{\text{T}}$ (or, $\sqrt{\mathbb A+(-1)\mathbb A^{\text{T}}}$). The second relation is  minus the coefficient of $\lambda$ in $\mathbb A+\lambda \mathbb A^{\text{T}}$.
They remain invariant under the replacement $G_{i,j}\to\widetilde G_{i,j}$ and commute with all geodesic functions $G_{i,j}$ and $\widetilde G_{i,j}$.

\begin{figure}[H]
\begin{pspicture}(-5,-2)(4,3){\psset{unit=1}
%
\multiput(-2,1.7)(1,0){2}{\psline[linecolor=blue,linewidth=2pt]{->}(0.1,0)(.9,0)}
\multiput(-1.5,0.85)(0.5,-0.85){2}{\psline[linecolor=blue,linewidth=2pt]{->}(0.1,0)(.9,0)}
\multiput(-0.5,0.85)(1,0){1}{\psline[linecolor=blue,linewidth=2pt]{->}(0.1,0)(.9,0)}
\multiput(1,0)(0.5,-0.85){2}{\psline[linecolor=blue,linewidth=2pt]{->}(-0.1,0)(-.9,0)}
\multiput(1.5,-0.85)(-1,0){2}{\psline[linecolor=blue,linewidth=2pt]{->}(-0.1,0)(-.9,0)}
%
\multiput(0.5,.85)(0.5,-0.85){2}{\psline[linecolor=blue,linewidth=2pt]{->}(0.05,-0.085)(.45,-0.765)}
\multiput(0,0)(0.5,-0.85){1}{\psline[linecolor=blue,linewidth=2pt]{->}(0.05,-0.085)(.45,-0.765)}
\multiput(-0.5,-.85)(-0.5,0.85){3}{\psline[linecolor=blue,linewidth=2pt]{->}(-0.05,0.085)(-.45,0.765)}
\multiput(0,0)(-0.5,0.85){2}{\psline[linecolor=blue,linewidth=2pt]{->}(-0.05,0.085)(-.45,0.765)}
\multiput(0.5,.85)(-0.5,0.85){1}{\psline[linecolor=blue,linewidth=2pt]{->}(-0.05,0.085)(-.45,0.765)}
\multiput(0.5,-0.85)(0.5,0.85){1}{\psline[linecolor=blue,linewidth=2pt]{->}(0.05,0.085)(.45,0.765)}
\multiput(0,1.7)(-0.5,-0.85){2}{\psline[linecolor=blue,linewidth=2pt]{->}(-0.05,-0.085)(-.45,-0.765)}
\multiput(-1,1.7)(-0.5,-0.85){1}{\psline[linecolor=blue,linewidth=2pt]{->}(-0.05,-0.085)(-.45,-0.765)}
%
\psbezier[linecolor=blue,linewidth=2pt]{->}(0.07,1.75)(2.5,2.5)(2.5,-3)(0.55,-0.93)
\psbezier[linecolor=white,linewidth=3pt]{->}(-1,1.7)(1.5,3.85)(2.7,0.8)(1.53,-0.76)
\psbezier[linecolor=blue,linewidth=2pt]{->}(-1,1.7)(1.5,3.85)(2.7,0.8)(1.53,-0.76)
\psbezier[linecolor=blue,linewidth=2pt]{<-}(-1.07,1.75)(-4,4)(-1.5,-3.5)(0.45,-0.93)
\psbezier[linecolor=white,linewidth=3pt]{<-}(-0.07,1.75)(-4.5,5)(-3.5,-3.5)(-0.55,-0.93)
\psbezier[linecolor=blue,linewidth=2pt]{<-}(-0.07,1.75)(-4.5,5)(-3.5,-3.5)(-0.55,-0.93)
\psbezier[linecolor=white,linewidth=3pt]{<-}(-2.07,1.65)(-4,0.5)(-1.5,-3.5)(1.45,-0.93)
\psbezier[linecolor=blue,linewidth=2pt]{<-}(-2.07,1.65)(-4,0.5)(-1.5,-3.5)(1.45,-0.93)
\multiput(-3,1.7)(0.5,-0.85){4}{
\multiput(1,0)(1,0){3}{\pscircle[fillstyle=solid,fillcolor=lightgray]{.1}}}
\put(-2,1.9){\makebox(0,0)[bc]{\hbox{{$a_1$}}}}
\put(-1,1.9){\makebox(0,0)[bc]{\hbox{{$a_2$}}}}
\put(0,1.9){\makebox(0,0)[bc]{\hbox{{$a_3$}}}}
\put(-1.65,0.9){\makebox(0,0)[br]{\hbox{{$b_1$}}}}
\put(-0.7,1){\makebox(0,0)[br]{\hbox{{$b_2$}}}}
\put(0.3,1){\makebox(0,0)[br]{\hbox{{$b_3$}}}}
\put(-1.15,0.05){\makebox(0,0)[br]{\hbox{{$c_1$}}}}
\put(0.3,0.15){\makebox(0,0)[br]{\hbox{{$c_2$}}}}
\put(1.3,0.15){\makebox(0,0)[br]{\hbox{{$c_3$}}}}
\put(-0.65,-0.8){\makebox(0,0)[br]{\hbox{{$d_1$}}}}
\put(0.3,-0.7){\makebox(0,0)[br]{\hbox{{$d_2$}}}}
\put(1.3,-0.7){\makebox(0,0)[br]{\hbox{{$d_3$}}}}
\psline[doubleline=true,linewidth=1pt, doublesep=1pt, linecolor=black]{->}(2.5,0.85)(4,0.85)
\put(3.25,1){\makebox(0,0)[bc]{\hbox{{$\mu_{a_1}\mu_{d_1}\mu_{c_2}\mu_{b_3}$}}}}
}
\end{pspicture}
\begin{pspicture}(-4,-1.5)(4,2){\psset{unit=1}
\multiput(-1,1.7)(1,-1.7){2}{\psline[linecolor=blue,linewidth=2pt]{->}(0.1,0)(.9,0)}
\multiput(-0.5,0.85)(1,-1.7){2}{\psline[linecolor=blue,linewidth=2pt]{<-}(0.1,0)(.9,0)}
\multiput(-1,1.7)(1,-1.7){2}{\psline[linecolor=blue,linewidth=2pt]{->}(-0.1,0)(-.9,0)}
\multiput(-0.5,0.85)(1,-1.7){2}{\psline[linecolor=blue,linewidth=2pt]{<-}(-0.1,0)(-.9,0)}
\multiput(-0.5,-.85)(0.5,0.85){3}{\psline[linecolor=blue,linewidth=2pt]{<-}(-0.05,0.085)(-.45,0.765)}
\multiput(-1.5,0.85)(3,-1.7){2}{\psline[linecolor=blue,linewidth=2pt]{<-}(-0.05,0.085)(-.45,0.765)}
\multiput(-1,0)(0.5,0.85){2}{\psline[linecolor=blue,linewidth=2pt]{->}(-0.05,0.085)(-.45,0.765)}
\multiput(0.5,-.85)(0.5,0.85){2}{\psline[linecolor=blue,linewidth=2pt]{->}(-0.05,0.085)(-.45,0.765)}
%
\psbezier[linecolor=blue,linewidth=2pt]{<-}(-1.07,1.75)(-4,4)(-1.5,-3.5)(0.45,-0.93)
\psbezier[linecolor=white,linewidth=3pt]{->}(-0.07,1.75)(-4.5,5)(-3.5,-3.5)(-0.55,-0.93)
\psbezier[linecolor=blue,linewidth=2pt]{->}(-0.07,1.75)(-4.5,5)(-3.5,-3.5)(-0.55,-0.93)
\psbezier[linecolor=white,linewidth=3pt]{->}(-2.07,1.65)(-4,0.5)(-1.5,-3.5)(1.45,-0.93)
\psbezier[linecolor=blue,linewidth=2pt]{->}(-2.07,1.65)(-4,0.5)(-1.5,-3.5)(1.45,-0.93)
\multiput(-3,1.7)(0.5,-0.85){4}{
\multiput(1,0)(1,0){3}{\pscircle[fillstyle=solid,fillcolor=lightgray]{.1}}}
\put(-2,1.9){\makebox(0,0)[bc]{\hbox{{$a_1$}}}}
\put(-1,1.9){\makebox(0,0)[bc]{\hbox{{$a_2$}}}}
\put(0,1.9){\makebox(0,0)[bc]{\hbox{{$a_3$}}}}
\put(-1.65,0.9){\makebox(0,0)[br]{\hbox{{$b_1$}}}}
\put(-0.7,1){\makebox(0,0)[br]{\hbox{{$b_2$}}}}
\put(0.3,1){\makebox(0,0)[br]{\hbox{{$b_3$}}}}
\put(-1.15,0.05){\makebox(0,0)[br]{\hbox{{$c_1$}}}}
\put(0.3,0.15){\makebox(0,0)[br]{\hbox{{$c_2$}}}}
\put(1.3,0.15){\makebox(0,0)[br]{\hbox{{$c_3$}}}}
\put(-0.65,-0.8){\makebox(0,0)[br]{\hbox{{$d_1$}}}}
\put(0.3,-0.7){\makebox(0,0)[br]{\hbox{{$d_2$}}}}
\put(1.3,-0.7){\makebox(0,0)[br]{\hbox{{$d_3$}}}}
}
\end{pspicture}
\caption{\small
The original $SL_4$: quiver on the left is transformed by the chain of mutations (from right to left) $\mu_{a_1} \mu_{d_1} \mu_{c_2} \mu_{b_3}:= \mu_{a_1d_1c_2b_3}$ to the symmetric quiver on the right. We use the same letters to denote both the original and transformed cluster variables hoping it will not lead to confusion.
}
\label{fi:SL4}
\end{figure}
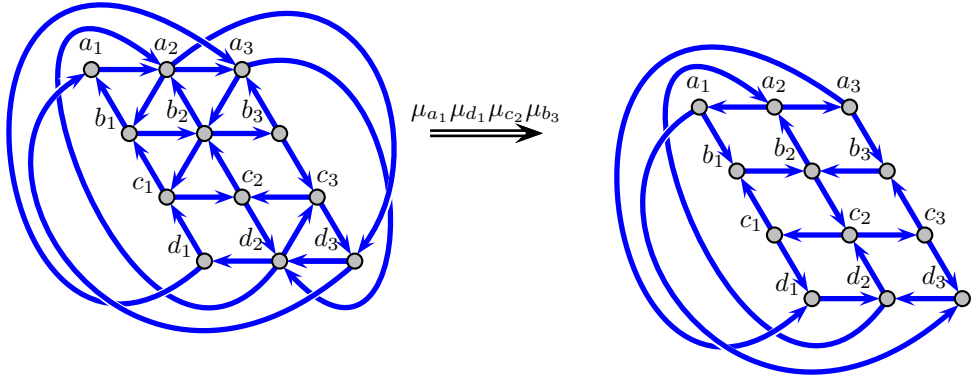

\begin{lemma}
Each $SL_n$ quiver can be transformed into a ``regular'' quiver with vertices $s_{i,j}$, $i=1,\dots,n-1$, $j=1,\dots n$ and arrows joining $s_{i,j}$ with $s_{i\pm 1,j}$ and $s_{i,j\pm 1}$, where we set $s_{i,n+1}=s_{n-i,1}$ and $s_{i,0}=s_{n-i,n}$ (the Moebius-type gluing); the directions of arrows alternate as shown in Fig.~\ref{fi:SL4}. A new vertex $a_t$ is of order $2n$ and is incident to all vertices $a_{1,j}$ and $a_{n-1,j}$ with arrows arrangement such that every vertex $a_{i,j}$ has order four and has exactly two incoming and two outgoing edges. The Casimir of the thus amended quiver is $a_t\prod_{i,j}a_{i,j}=1$.
\end{lemma}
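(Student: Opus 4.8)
The plan is to produce an explicit finite mutation sequence, uniform in $n$, carrying the amalgamated $\A_n$ quiver of Theorem~\ref{Casimirs}(iii) to the regular quiver of the statement, and then to read off the Casimir from the balanced incidence structure of the target. After expressing the $2n$ frozen variables through the dynamical ones (as in the discussion preceding Fig.~\ref{fi:A-q}), the dynamical part of the amalgamated quiver has exactly $n(n+1)-2n=n(n-1)$ vertices, sitting on the skew rhombus lattice of Fig.~\ref{fi:Casimirs-5} with the upper and lower boundary rows glued in the Moebius fashion of Section~\ref{s:or}; this is the quiver shown for $n=3,4$ in Fig.~\ref{fi:A-q} and as the left-hand ``original'' quiver of Fig.~\ref{fi:SL4} for $n=4$. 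I relabel its vertices $s_{i,j}$, $i=1,\dots,n-1$, $j=1,\dots,n$, carrying along the identifications $s_{i,n+1}=s_{n-i,1}$ and $s_{i,0}=s_{n-i,n}$ that encode the orientation-reversing gluing of the horizontal direction.

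The core step is the regularizing mutation sequence. For $n=4$ it is $\mu_{a_1}\mu_{d_1}\mu_{c_2}\mu_{b_3}$, taking the left quiver of Fig.~\ref{fi:SL4} to the symmetric right one; in general I would mutate at one vertex on each lattice diagonal, chosen pairwise non-adjacent so that the mutations commute and their order is immaterial. I must then show that the composite converts the rhombus together with its boundary defects into the clean cylinder pattern, in which every $s_{i,j}$ is joined to precisely $s_{i\pm1,j}$ and $s_{i,j\pm1}$ (read through the Moebius identifications) with arrow directions following the checkerboard alternation of Fig.~\ref{fi:SL4}, so that each $s_{i,j}$ has two incoming and two outgoing edges. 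I would prove this by induction on the rows of the rhombus: a single diagonal mutation reconnects arrows only in its immediate neighbourhood, and peeling off the outermost diagonal reduces the $n$-case to the local models already checked for $n=3,4$. This uniform local verification, and in particular the control of arrow orientations across the Moebius seam, is the main obstacle.

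With the regular body established, the extra vertex $a_t$ is the single vertex obtained by amalgamating the leftover corner variables of $Q_\square$. By construction it is incident to the two extreme rows $s_{1,j}$ and $s_{n-1,j}$, $j=1,\dots,n$, which gives its order $2n$, and the corner instance of the mutation rule arranges these $2n$ edges so that $n$ enter $a_t$ and $n$ leave it. Thus $a_t$ plays the role of the out-of-range neighbours $s_{0,j}$ and $s_{n,j}$, so that the vertices of the extreme rows remain four-valent as well; this mirrors, for general $n$, the adjunction of the extra vertex that produces $X_7$ from the papillon $X_6$ in the case $n=3$.

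It remains to identify the Casimir. In the amended quiver every vertex is balanced: each $s_{i,j}$ has two incoming and two outgoing edges, and $a_t$ has $n$ of each. Hence all row sums of the exchange matrix $\epsilon$ vanish, $\sum_k\epsilon_{\ell k}=0$ for every $\ell$, which is precisely the condition that the total monomial $a_t\prod_{i,j}s_{i,j}$ Poisson-commute with every cluster variable, so it is a Casimir. Its value is then forced by Lemma~\ref{uni}: under unipotency of $\mathbb A$ and $\widetilde{\mathbb A}$ every Casimir of the amalgamated quiver equals unity, and since this total product is a monomial in those Casimirs we conclude $a_t\prod_{i,j}s_{i,j}=1$.
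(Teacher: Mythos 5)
Your route for the main transformation step coincides with the paper's: the paper's entire proof is the single sentence that ``a sequence of mutations similar to the one shown on Figure~\ref{fi:SL4} proves the statement,'' so your plan of mutating at one vertex per diagonal (the commuting set $\mu_{a_1}\mu_{d_1}\mu_{c_2}\mu_{b_3}$ for $n=4$) and verifying the local rewiring across the Moebius seam is the intended argument, left by you---as by the paper---at the level of a deferred local verification. Your justification of the Casimir property is a genuine improvement over the paper's bare assertion: since every $s_{i,j}$ has two incoming and two outgoing arrows, all row sums of the exchange matrix vanish, so the monomial $a_t\prod_{i,j}s_{i,j}$ Poisson-commutes with every cluster variable (and note that balance at $a_t$ itself is then automatic, because the global sum of signed degrees over all vertices is zero).

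The genuine gap is in your final step, on two counts. First, $a_t$ is \emph{not} obtained by amalgamating leftover corner variables of $Q_\square$: those corner amalgams are among the $2n$ frozen variables that are eliminated before Fig.~\ref{fi:A-q}; instead, $a_t$ is a brand-new vertex adjoined by analogy with the $X_6\to X_7$ extension, exactly as in the construction around Fig.~\ref{fi:SL4-1} (``adding a new vertex \dots joining it with only $a_1$ and $a_2$''). Second, and consequently, the value $1$ cannot be ``forced by Lemma~\ref{uni}'': the Casimirs $C_0,C_1,C_i,\widetilde C_i$ of that lemma involve only the old variables, so the monomial $a_t\prod_{i,j}s_{i,j}$, which contains the free new variable $a_t$, is not a monomial in them, and unipotency of $\mathbb A$ and $\widetilde{\mathbb A}$ places no constraint whatsoever on $a_t$. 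The equality to unity is an \emph{imposed} normalization, exactly as in (\ref{Cas}) for $X_7$, where the paper explicitly writes ``we impose the condition that this Casimir is equal the unity'' and then uses it to express the new variable through the others; this keeps the Poisson dimension unchanged but is a choice, not a consequence. That it is not derivable is confirmed by the paper itself: in the genus-three analysis the same Casimir (\ref{eq:CasN4}) must be set to $-1$ for the rank condition to hold, which would be contradictory if unipotency forced the value $+1$.
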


\begin{proof} A sequence of mutations similar to the one shown on Figure~\ref{fi:SL4} proves the statement.
\end{proof}

We now address the problem of constructing a still missing geodesic function $G_B$, which, in particular, does not commute with $\mathfrak M_1$ and $\mathfrak M_2$. For this, we refer to the original $SL_4$ quiver in the left-hand side of Fig.~\ref{fi:SL4}. Mutations first at $a_3$ and then at $a_2$ produce the quiver in which the vertex $a_1$ has order one (Fig.~\ref{fi:SL4-1}, on the right). We then proceed by analogy with the $SL_3$ case adding a new vertex (with the variable $a_t$ associated) and joining it with only $a_1$ and $a_2$ (thus obtaining a "wing of a papillon"); we simultaneously introduce the Casimir $C=a_ta_1a_2^2a_3^2b_1\cdots d_3$ whose value is to be determined below from compatibility conditions.

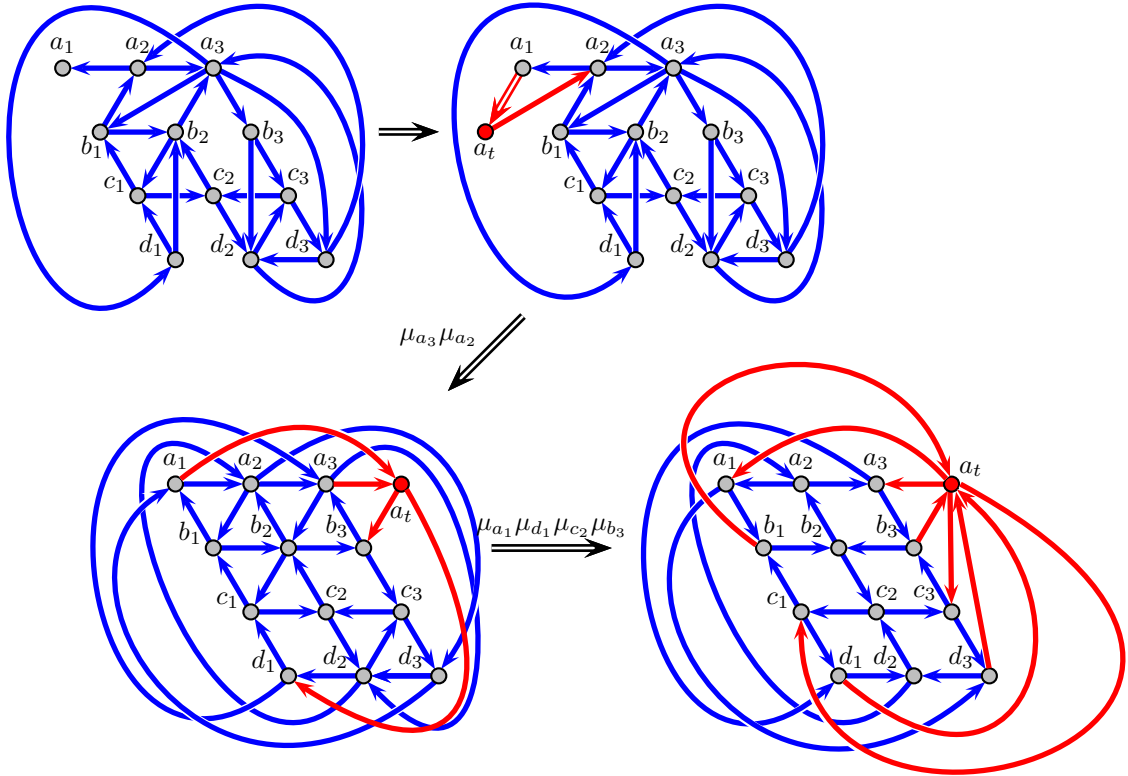
\begin{figure}[H]
\begin{pspicture}(-3.5,-1.5)(3,3){\psset{unit=1}
%
\multiput(-2,1.7)(1,0){1}{\psline[linecolor=blue,linewidth=2pt]{<-}(0.1,0)(.9,0)}
\multiput(-1,1.7)(1,0){1}{\psline[linecolor=blue,linewidth=2pt]{->}(0.1,0)(.9,0)}
\multiput(-1.5,0.85)(0.5,-0.85){2}{\psline[linecolor=blue,linewidth=2pt]{->}(0.1,0)(.9,0)}
\multiput(0,0)(0.5,-0.85){2}{\psline[linecolor=blue,linewidth=2pt]{<-}(0.1,0)(.9,0)}
%
\multiput(0,1.7)(0.5,-0.85){3}{\psline[linecolor=blue,linewidth=2pt]{->}(0.05,-0.085)(.45,-0.765)}
\multiput(0,0)(0.5,-0.85){1}{\psline[linecolor=blue,linewidth=2pt]{->}(0.05,-0.085)(.45,-0.765)}
\multiput(-0.5,-.85)(-0.5,0.85){2}{\psline[linecolor=blue,linewidth=2pt]{->}(-0.05,0.085)(-.45,0.765)}
\multiput(0,0)(-0.5,0.85){1}{\psline[linecolor=blue,linewidth=2pt]{->}(-0.05,0.085)(-.45,0.765)}
\multiput(0.5,-0.85)(0.5,0.85){1}{\psline[linecolor=blue,linewidth=2pt]{->}(0.05,0.085)(.45,0.765)}
\multiput(-1.5,0.85)(1,0){2}{\psline[linecolor=blue,linewidth=2pt]{->}(0.05,0.085)(.45,0.765)}
\multiput(-0.5,0.85)(-0.5,-0.85){1}{\psline[linecolor=blue,linewidth=2pt]{->}(-0.05,-0.085)(-.45,-0.765)}
%
\psline[linecolor=white,linewidth=3pt]{->}(-0.09,1.67)(-1.41,0.89)
\psline[linecolor=blue,linewidth=2pt]{->}(-0.09,1.67)(-1.41,0.89)
\psline[linecolor=white,linewidth=3pt]{->}(-0.5,-0.75)(-0.5,0.75)
\psline[linecolor=blue,linewidth=2pt]{->}(-0.5,-0.75)(-0.5,0.75)
\psline[linecolor=white,linewidth=3pt]{<-}(0.5,-0.75)(0.5,0.75)
\psline[linecolor=blue,linewidth=2pt]{<-}(0.5,-0.75)(0.5,0.75)
%
\psbezier[linecolor=blue,linewidth=2pt]{<-}(0.07,1.75)(2.5,2.5)(2.5,-3)(0.55,-0.93)
\psbezier[linecolor=white,linewidth=3pt]{<-}(-0.93,1.77)(1.5,3.85)(2.7,0.8)(1.53,-0.76)
\psbezier[linecolor=blue,linewidth=2pt]{<-}(-0.93,1.77)(1.5,3.85)(2.7,0.8)(1.53,-0.76)
\psbezier[linecolor=white,linewidth=3pt]{->}(-0.07,1.75)(-4,5)(-3,-3.5)(-0.55,-0.93)
\psbezier[linecolor=blue,linewidth=2pt]{->}(-0.07,1.75)(-4,5)(-3,-3.5)(-0.55,-0.93)
\psbezier[linecolor=blue,linewidth=2pt]{->}(0.08,1.67)(1,1.2)(1.5,1)(1.5,-0.75)
\multiput(-3,1.7)(0.5,-0.85){4}{
\multiput(1,0)(1,0){3}{\pscircle[fillstyle=solid,fillcolor=lightgray]{.1}}}
\put(-2,1.9){\makebox(0,0)[bc]{\hbox{{$a_1$}}}}
\put(-1,1.9){\makebox(0,0)[bc]{\hbox{{$a_2$}}}}
\put(0,1.9){\makebox(0,0)[bc]{\hbox{{$a_3$}}}}
\put(-1.45,0.8){\makebox(0,0)[tr]{\hbox{{$b_1$}}}}
\put(-0.35,0.85){\makebox(0,0)[cl]{\hbox{{$b_2$}}}}
\put(0.65,0.85){\makebox(0,0)[cl]{\hbox{{$b_3$}}}}
\put(-1.15,0.05){\makebox(0,0)[br]{\hbox{{$c_1$}}}}
\put(0.3,0.15){\makebox(0,0)[br]{\hbox{{$c_2$}}}}
\put(1.3,0.15){\makebox(0,0)[br]{\hbox{{$c_3$}}}}
\put(-0.65,-0.8){\makebox(0,0)[br]{\hbox{{$d_1$}}}}
\put(0.3,-0.8){\makebox(0,0)[br]{\hbox{{$d_2$}}}}
\put(1.3,-0.75){\makebox(0,0)[br]{\hbox{{$d_3$}}}}
\psline[doubleline=true,linewidth=1pt, doublesep=1pt, linecolor=black]{->}(2.2,0.85)(3,0.85)
}
\end{pspicture}
\begin{pspicture}(-3,-1.5)(2.5,2){\psset{unit=1}
%
\multiput(-2,1.7)(1,0){1}{\psline[linecolor=blue,linewidth=2pt]{<-}(0.1,0)(.9,0)}
\multiput(-1,1.7)(1,0){1}{\psline[linecolor=blue,linewidth=2pt]{->}(0.1,0)(.9,0)}
\multiput(-1.5,0.85)(0.5,-0.85){2}{\psline[linecolor=blue,linewidth=2pt]{->}(0.1,0)(.9,0)}
\multiput(0,0)(0.5,-0.85){2}{\psline[linecolor=blue,linewidth=2pt]{<-}(0.1,0)(.9,0)}
%
\multiput(0,1.7)(0.5,-0.85){3}{\psline[linecolor=blue,linewidth=2pt]{->}(0.05,-0.085)(.45,-0.765)}
\multiput(0,0)(0.5,-0.85){1}{\psline[linecolor=blue,linewidth=2pt]{->}(0.05,-0.085)(.45,-0.765)}
\multiput(-0.5,-.85)(-0.5,0.85){2}{\psline[linecolor=blue,linewidth=2pt]{->}(-0.05,0.085)(-.45,0.765)}
\multiput(0,0)(-0.5,0.85){1}{\psline[linecolor=blue,linewidth=2pt]{->}(-0.05,0.085)(-.45,0.765)}
\multiput(0.5,-0.85)(0.5,0.85){1}{\psline[linecolor=blue,linewidth=2pt]{->}(0.05,0.085)(.45,0.765)}
\multiput(-1.5,0.85)(1,0){2}{\psline[linecolor=blue,linewidth=2pt]{->}(0.05,0.085)(.45,0.765)}
\multiput(-0.5,0.85)(-0.5,-0.85){1}{\psline[linecolor=blue,linewidth=2pt]{->}(-0.05,-0.085)(-.45,-0.765)}
%
\psline[linecolor=white,linewidth=3pt]{->}(-0.09,1.67)(-1.41,0.89)
\psline[linecolor=blue,linewidth=2pt]{->}(-0.09,1.67)(-1.41,0.89)
\psline[linecolor=white,linewidth=3pt]{->}(-0.5,-0.75)(-0.5,0.75)
\psline[linecolor=blue,linewidth=2pt]{->}(-0.5,-0.75)(-0.5,0.75)
\psline[linecolor=white,linewidth=3pt]{<-}(0.5,-0.75)(0.5,0.75)
\psline[linecolor=blue,linewidth=2pt]{<-}(0.5,-0.75)(0.5,0.75)
\rput(-1,0){\psline[linecolor=red,linewidth=2pt]{<-}(-0.09,1.67)(-1.41,0.89)}
\rput(-2,1.7){\psline[doubleline=true,linewidth=1pt, doublesep=1pt, linecolor=red]{->}(-0.05,-0.085)(-.45,-0.765)}
\rput(-2.5,0.85){\pscircle[fillstyle=solid,fillcolor=red]{.1}}
%
\psbezier[linecolor=blue,linewidth=2pt]{<-}(0.07,1.75)(2.5,2.5)(2.5,-3)(0.55,-0.93)
\psbezier[linecolor=white,linewidth=3pt]{<-}(-0.93,1.77)(1.5,3.85)(2.7,0.8)(1.53,-0.76)
\psbezier[linecolor=blue,linewidth=2pt]{<-}(-0.93,1.77)(1.5,3.85)(2.7,0.8)(1.53,-0.76)
\psbezier[linecolor=white,linewidth=3pt]{->}(-0.07,1.75)(-4.5,5)(-3,-3)(-0.55,-0.93)
\psbezier[linecolor=blue,linewidth=2pt]{->}(-0.07,1.75)(-4.5,5)(-3,-3)(-0.55,-0.93)
\psbezier[linecolor=blue,linewidth=2pt]{->}(0.08,1.67)(1,1.2)(1.5,1)(1.5,-0.75)
\multiput(-3,1.7)(0.5,-0.85){4}{
\multiput(1,0)(1,0){3}{\pscircle[fillstyle=solid,fillcolor=lightgray]{.1}}}
\put(-2,1.9){\makebox(0,0)[bc]{\hbox{{$a_1$}}}}
\put(-1,1.9){\makebox(0,0)[bc]{\hbox{{$a_2$}}}}
\put(0,1.9){\makebox(0,0)[bc]{\hbox{{$a_3$}}}}
\put(-1.45,0.8){\makebox(0,0)[tr]{\hbox{{$b_1$}}}}
\put(-0.35,0.85){\makebox(0,0)[cl]{\hbox{{$b_2$}}}}
\put(0.65,0.85){\makebox(0,0)[cl]{\hbox{{$b_3$}}}}
\put(-1.15,0.05){\makebox(0,0)[br]{\hbox{{$c_1$}}}}
\put(0.3,0.15){\makebox(0,0)[br]{\hbox{{$c_2$}}}}
\put(1.3,0.15){\makebox(0,0)[br]{\hbox{{$c_3$}}}}
\put(-0.65,-0.8){\makebox(0,0)[br]{\hbox{{$d_1$}}}}
\put(0.3,-0.8){\makebox(0,0)[br]{\hbox{{$d_2$}}}}
\put(1.3,-0.75){\makebox(0,0)[br]{\hbox{{$d_3$}}}}
\put(-2.5,0.7){\makebox(0,0)[tc]{\hbox{{$a_t$}}}}
\psline[doubleline=true,linewidth=1pt, doublesep=1pt, linecolor=black]{->}(-2,-1.6)(-3,-2.6)
\put(-2.6,-2){\makebox(0,0)[br]{\hbox{{$ \mu_{a_3}\mu_{a_2}$}}}}
}
\end{pspicture}
\\
\begin{pspicture}(-5,-2)(4.2,4){\psset{unit=1}
%
\multiput(-2,1.7)(1,0){2}{\psline[linecolor=blue,linewidth=2pt]{->}(0.1,0)(.9,0)}
\multiput(-1.5,0.85)(0.5,-0.85){2}{\psline[linecolor=blue,linewidth=2pt]{->}(0.1,0)(.9,0)}
\multiput(-0.5,0.85)(1,0){1}{\psline[linecolor=blue,linewidth=2pt]{->}(0.1,0)(.9,0)}
\multiput(1,0)(0.5,-0.85){2}{\psline[linecolor=blue,linewidth=2pt]{->}(-0.1,0)(-.9,0)}
\multiput(1.5,-0.85)(-1,0){2}{\psline[linecolor=blue,linewidth=2pt]{->}(-0.1,0)(-.9,0)}
%
\multiput(0.5,.85)(0.5,-0.85){2}{\psline[linecolor=blue,linewidth=2pt]{->}(0.05,-0.085)(.45,-0.765)}
\multiput(0,0)(0.5,-0.85){1}{\psline[linecolor=blue,linewidth=2pt]{->}(0.05,-0.085)(.45,-0.765)}
\multiput(-0.5,-.85)(-0.5,0.85){3}{\psline[linecolor=blue,linewidth=2pt]{->}(-0.05,0.085)(-.45,0.765)}
\multiput(0,0)(-0.5,0.85){2}{\psline[linecolor=blue,linewidth=2pt]{->}(-0.05,0.085)(-.45,0.765)}
\multiput(0.5,.85)(-0.5,0.85){1}{\psline[linecolor=blue,linewidth=2pt]{->}(-0.05,0.085)(-.45,0.765)}
\multiput(0.5,-0.85)(0.5,0.85){1}{\psline[linecolor=blue,linewidth=2pt]{->}(0.05,0.085)(.45,0.765)}
\multiput(0,1.7)(-0.5,-0.85){2}{\psline[linecolor=blue,linewidth=2pt]{->}(-0.05,-0.085)(-.45,-0.765)}
\multiput(-1,1.7)(-0.5,-0.85){1}{\psline[linecolor=blue,linewidth=2pt]{->}(-0.05,-0.085)(-.45,-0.765)}
%
\psbezier[linecolor=blue,linewidth=2pt]{->}(0.07,1.77)(2,4)(3,-3.5)(0.55,-0.93)
\psbezier[linecolor=white,linewidth=3pt]{->}(-1,1.7)(1.5,3.85)(2.7,0.8)(1.53,-0.76)
\psbezier[linecolor=blue,linewidth=2pt]{->}(-1,1.7)(1.5,3.85)(2.7,0.8)(1.53,-0.76)
\psbezier[linecolor=blue,linewidth=2pt]{<-}(-1.07,1.75)(-4,4)(-1.5,-3.5)(0.45,-0.93)
\psbezier[linecolor=white,linewidth=3pt]{<-}(-0.07,1.75)(-4.5,5)(-3.5,-3.5)(-0.55,-0.93)
\psbezier[linecolor=blue,linewidth=2pt]{<-}(-0.07,1.75)(-4.5,5)(-3.5,-3.5)(-0.55,-0.93)
\psbezier[linecolor=white,linewidth=3pt]{<-}(-2.07,1.65)(-4,0.5)(-1.5,-3.5)(1.45,-0.93)
\psbezier[linecolor=blue,linewidth=2pt]{<-}(-2.07,1.65)(-4,0.5)(-1.5,-3.5)(1.45,-0.93)
\multiput(-3,1.7)(0.5,-0.85){4}{
\multiput(1,0)(1,0){3}{\pscircle[fillstyle=solid,fillcolor=lightgray]{.1}}}
\rput(0,1.7){\psline[linecolor=red,linewidth=2pt]{->}(0.1,0)(0.9,0)}
\rput(1,1.7){\psline[linecolor=red,linewidth=2pt]{->}(-0.05,-0.085)(-.45,-0.765)}
\rput(1,1.7){\pscircle[fillstyle=solid,fillcolor=red]{.1}} 
\psbezier[linecolor=white,linewidth=3pt]{->}(-1.93,1.77)(-1,2.5)(0,2.8)(0.93,1.77)
\psbezier[linecolor=red,linewidth=2pt]{->}(-1.93,1.77)(-1,2.5)(0,2.8)(0.93,1.77)
\psbezier[linecolor=red,linewidth=2pt]{<-}(-0.43,-0.92)(2,-2.5)(2.5,-0.85)(1.07,1.63)
\put(-2,1.9){\makebox(0,0)[bc]{\hbox{{$a_1$}}}}
\put(-1,1.9){\makebox(0,0)[bc]{\hbox{{$a_2$}}}}
\put(0,1.9){\makebox(0,0)[bc]{\hbox{{$a_3$}}}}
\put(-1.65,0.9){\makebox(0,0)[br]{\hbox{{$b_1$}}}}
\put(-0.7,1){\makebox(0,0)[br]{\hbox{{$b_2$}}}}
\put(0.3,1){\makebox(0,0)[br]{\hbox{{$b_3$}}}}
\put(-1.15,0.05){\makebox(0,0)[br]{\hbox{{$c_1$}}}}
\put(0.3,0.15){\makebox(0,0)[br]{\hbox{{$c_2$}}}}
\put(1.3,0.15){\makebox(0,0)[br]{\hbox{{$c_3$}}}}
\put(-0.65,-0.8){\makebox(0,0)[br]{\hbox{{$d_1$}}}}
\put(0.3,-0.75){\makebox(0,0)[br]{\hbox{{$d_2$}}}}
\put(1.3,-0.75){\makebox(0,0)[br]{\hbox{{$d_3$}}}}
\put(1,1.4){\makebox(0,0)[tc]{\hbox{{$a_t$}}}}
\psline[doubleline=true,linewidth=1pt, doublesep=1pt, linecolor=black]{->}(2.2,0.85)(3.8,0.85)
\put(3,1){\makebox(0,0)[bc]{\hbox{{$ \mu_{a_1}\mu_{d_1}\mu_{c_2}\mu_{b_3}$}}}}
}
\end{pspicture}
\begin{pspicture}(-3,-2)(2,4){\psset{unit=1}
\multiput(-1,1.7)(1,-1.7){2}{\psline[linecolor=blue,linewidth=2pt]{->}(0.1,0)(.9,0)}
\multiput(-0.5,0.85)(1,-1.7){2}{\psline[linecolor=blue,linewidth=2pt]{<-}(0.1,0)(.9,0)}
\multiput(-1,1.7)(1,-1.7){2}{\psline[linecolor=blue,linewidth=2pt]{->}(-0.1,0)(-.9,0)}
\multiput(-0.5,0.85)(1,-1.7){2}{\psline[linecolor=blue,linewidth=2pt]{<-}(-0.1,0)(-.9,0)}
\multiput(-0.5,-.85)(0.5,0.85){3}{\psline[linecolor=blue,linewidth=2pt]{<-}(-0.05,0.085)(-.45,0.765)}
\multiput(-1.5,0.85)(3,-1.7){2}{\psline[linecolor=blue,linewidth=2pt]{<-}(-0.05,0.085)(-.45,0.765)}
\multiput(-1,0)(0.5,0.85){2}{\psline[linecolor=blue,linewidth=2pt]{->}(-0.05,0.085)(-.45,0.765)}
\multiput(0.5,-.85)(0.5,0.85){2}{\psline[linecolor=blue,linewidth=2pt]{->}(-0.05,0.085)(-.45,0.765)}
%
\psbezier[linecolor=blue,linewidth=2pt]{<-}(-1.07,1.75)(-4,4)(-1.5,-3.5)(0.45,-0.93)
\psbezier[linecolor=white,linewidth=3pt]{->}(-0.07,1.75)(-4.5,5)(-3.5,-3.5)(-0.55,-0.93)
\psbezier[linecolor=blue,linewidth=2pt]{->}(-0.07,1.75)(-4.5,5)(-3.5,-3.5)(-0.55,-0.93)
\psbezier[linecolor=white,linewidth=3pt]{->}(-2.07,1.65)(-4,0.5)(-1.5,-3.5)(1.45,-0.93)
\psbezier[linecolor=blue,linewidth=2pt]{->}(-2.07,1.65)(-4,0.5)(-1.5,-3.5)(1.45,-0.93)
\multiput(-3,1.7)(0.5,-0.85){4}{
\multiput(1,0)(1,0){3}{\pscircle[fillstyle=solid,fillcolor=lightgray]{.1}}}
%
%
\rput(1,0){\psline[linecolor=red,linewidth=2pt]{<-}(0,0.1)(-0.02,1.6)}
\rput(0,1.7){\psline[linecolor=red,linewidth=2pt]{<-}(0.1,0)(0.9,0)}
\rput(1,1.7){\psline[linecolor=red,linewidth=2pt]{<-}(-0.05,-0.085)(-.45,-0.765)}
\rput(1,-0.85){\psline[linecolor=red,linewidth=2pt]{->}(0.5,0.1)(0.06,2.45)}
\rput(1,1.7){\pscircle[fillstyle=solid,fillcolor=red]{.1}} 
\psbezier[linecolor=white,linewidth=3pt]{<-}(-1.93,1.77)(-1,2.5)(0,2.8)(0.93,1.77)
\psbezier[linecolor=red,linewidth=2pt]{<-}(-1.93,1.77)(-1,2.5)(0,2.8)(0.93,1.77)
\psbezier[linecolor=white,linewidth=3pt]{->}(-0.43,-0.92)(2,-3)(3,0)(1.07,1.63)
\psbezier[linecolor=red,linewidth=2pt]{->}(-0.43,-0.92)(2,-3)(3,0)(1.07,1.63)
\psbezier[linecolor=white,linewidth=3pt]{->}(-1.6,0.89)(-4.5,3)(0,4.5)(0.98,1.8)
\psbezier[linecolor=red,linewidth=2pt]{->}(-1.6,0.89)(-4.5,3)(0,4.5)(0.98,1.8)
\psbezier[linecolor=white,linewidth=3pt]{<-}(-1,-0.1)(-1,-4)(7,-1.5)(1.12,1.7)
\psbezier[linecolor=red,linewidth=2pt]{<-}(-1,-0.1)(-1,-4)(7,-1.5)(1.12,1.7)
\put(-2,1.9){\makebox(0,0)[bc]{\hbox{{$a_1$}}}}
\put(-1,1.9){\makebox(0,0)[bc]{\hbox{{$a_2$}}}}
\put(0,1.9){\makebox(0,0)[bc]{\hbox{{$a_3$}}}}
\put(-1.5,1){\makebox(0,0)[bl]{\hbox{{$b_1$}}}}
\put(-0.7,1){\makebox(0,0)[br]{\hbox{{$b_2$}}}}
\put(0.3,1){\makebox(0,0)[br]{\hbox{{$b_3$}}}}
\put(-1.15,0.05){\makebox(0,0)[br]{\hbox{{$c_1$}}}}
\put(0.3,0.15){\makebox(0,0)[br]{\hbox{{$c_2$}}}}
\put(0.8,0.15){\makebox(0,0)[br]{\hbox{{$c_3$}}}}
\put(-0.5,-0.75){\makebox(0,0)[bl]{\hbox{{$d_1$}}}}
\put(0.3,-0.75){\makebox(0,0)[br]{\hbox{{$d_2$}}}}
\put(1.3,-0.75){\makebox(0,0)[br]{\hbox{{$d_3$}}}}
\put(1.1,1.8){\makebox(0,0)[bl]{\hbox{{$a_t$}}}}
}
\end{pspicture}
\caption{\small
The mutations at $a_3$ and then at $a_2$ brings the original quivers for $SL_4$ into the form depicted on the left; in the middle picture we add the new vertex $a_t$ together with two directed edges; the new geodesic function, which we identify with $G_B$, is $\langle a_1a_t \rangle$. Transforming this quiver back to the original one by performing mutations at $a_2$ and then at $a_3$, we obtain the original quiver with the vertex $a_t$ of order four added; note that with the added edges, the number of incoming and outgoing edges is now the same at all thirteen vertices of the quiver. If we transform the original quiver into a ``symmetric'' one by performing mutations at $d_1$, $c_2$, $b_3$ and then in $a_1$, we obtain the quiver with the vertex $a_t$  of order eight and with all other vertices of order four (it is the last quiver in the chain of transformations in the figure).
}
\label{fi:SL4-1}
\end{figure}

The new geodesic function, which we associate with the missing cycle $B$, in the transformed quiver (the second picture in Fig.~\ref{fi:SL4-1} ) reads
$$
G_B=\langle a_1a_t \rangle=(a_1a_t)^{1/2}\left( 1+ \frac{1}{a_1}+\frac{1}{a_1a_t}\right).
$$
It is easy to see that $G_B$ commutes with $G_{1,2}$ and $G_{2,3}$. For this, let us examine geodesic functions for the upper-right quiver in Fig.~\ref{fi:SL4-1}):
\begin{align*}
&\mu_{a_3}  \mu_{a_2} G_{1,2}=\langle b_3d_2c_3 \rangle,\quad &\mu_{a_3}  \mu_{a_2}\widetilde G_{1,2}=\langle d_1b_2c_1 \rangle, \\
& \mu_{a_3}  \mu_{a_2} G_{2,3}=\langle c_3c_2b_2a_3d_3 \rangle,\quad  &\mu_{a_3}  \mu_{a_2}\widetilde G_{2,3}=\langle c_1c_2d_2a_3b_1 \rangle, \\
& \mu_{a_3}  \mu_{a_2} G_{3,4}=\langle d_3a_2d_2a_3d_1c_1b_1a_2a_1 \rangle,\quad  &\mu_{a_3}  \mu_{a_2}\widetilde G_{3,4}=\langle b_1(b_2a_2)a_3b_3c_3d_3a_2a_1 \rangle.
\end{align*}
The first two expressions do not contain variables $a_1$ and $a_2$ and therefore $G_B=\langle a_1a_t \rangle$ commutes with the transformed $G_{1,2}$ and $G_{2,3}$ and  $\widetilde G_{1,2}$ and $\widetilde G_{2,3}$ (and does not commute with $G_{3,4}$ and $\widetilde G_{3,4}$).

Transforming back to the original $SL_4$ quiver, we obtain the third quiver in chain in Fig.~\ref{fi:SL4-1}: the added vertex $a_t$ becomes of order four, and the numbers of incoming and outgoing edges coincide in every vertex, so the Casimir is just the product of all variables taken in power one:
\be\label{eq:CasN4}
C=a_ta_1a_2a_3b_1b_2b_3c_1c_2c_3d_1d_2d_3.
\ee
In the amended original $SL_4$ quiver, $G_B$ becomes
\be
\label{GB-4}
G_B=\langle a_1a_2a_3a_t\rangle.
\ee

If we proceed, by performing mutations at $d_1$, $c_2$, $b_3$ and $a_1$, we obtain the last in the chain ``symmetric'' quiver in Fig.~\ref{fi:SL4-1}. The vertex $a_t$ then becomes of order eight, all other vertices will be of order four, and the $G_B$ geodesic function becomes $\langle a_2a_3(d_1b_3)a_ta_1 \rangle$, where $\dots (xy)\dots$ indicates that we have terms $\dots\bigl(1+\frac1x+\frac1y+\frac1{xy}\bigr)\dots $ in the corresponding expression. For example, this expression for $G_B$ contains eight terms:
$$
G_B=(a_2a_3d_1b_3a_ta_1)^{1/2}\left[1+\frac{1}{a_2}+\frac{1}{a_2a_3}+\frac{1}{a_2a_3d_1}+\frac{1}{a_2a_3b_3}+\frac{1}{a_2a_3d_1b_3}+\frac{1}{a_2a_3d_1b_3a_t}+\frac{1}{a_2a_3d_1b_3a_ta_1} \right]
$$

We hope to identify the constructed quantities 
\be\label{eq:G-sequence}\{G_{12},G_{23},G_{34},G_B,\widetilde G_{34},\widetilde G_{23},\widetilde G_{12}\}\ee
 with geodesic functions associated with loops on genus three surface (see Figure~\ref{fi:genus-234}). Note that these loops form a chain with intersection numbers one for two consecutive loops and zero otherwise. We construct a matrix $U\in \A_8$
 with $U_{i,i+1},\ i=[1,7]$ given by the corresponding element of sequence ~(\ref{eq:G-sequence}) and all other terms computed using the skein and Poisson  relations 
 $$
 U_{i,j}=\frac{1}{2}U_{i,k}U_{k,j}+\{U_{i,k},U_{k,j}\}, \text{ here } i<k<j.
$$
 
To identify entries $U_{i,j}$ with geodesic functions of the corresponding loops on the genus three surface, the matrix $U$ must satisfy the rank condition: 
$\operatorname{rank}(U+U^{\text{T}})\le 4$. Symbolic computation on Maple demonstrates that the rank condition holds only for the Casimir~(\ref{eq:CasN4})  $C=-1$.


\subsection{$\mathbb Z_2$ symmetry}\label{ss:Z2}
We now address the topic mostly overseen in the literature on the braid-group transformations in $\mathcal A_n$: do these transformations generate the full modular group of $\mathcal T_{g,s}$ ($s=1,2$)? Alternatively, do the twists over $G_{i,i+1}$, $\widetilde G_{i,i+1}$, and $G_B$ generate the full modular group of $\mathcal T_{n-1,0}$? The answer is affirmative for $\mathcal T_{1,1}$ and $\mathcal T_{2,0}$, but for higher genera we have a problem of {\sl $\mathbb Z_2$ symmetry}: indeed, leave for a moment metric structures on $\Sigma_{g,s}$ aside and consider only topology of  $\Sigma_{n-1,0}$ and homotopy types of closed paths on this surface. 

Let us examine the structure of closed paths in Fig.~\ref{fi:genus-234}. Note that all paths corresponding to $G_{i,i+1}$, $\widetilde G_{i,i+1}$, and $G_B$ enjoy $\mathbb Z_2$ symmetry w.r.t. rotation by $180^o$ (or reflection) w.r.t. the vertical axis. It follows immediately that if we perform any binary operation of summation, multiplication, or the Poisson bracket on $\mathbb Z_2$-symmetric objects, then the result will be $\mathbb Z_2$-symmetric itself. This means that we can never obtain a single geodesic function corresponding to a non-$\mathbb Z_2$ symmetric curve (say, either $\mathfrak M_1$ or $\mathfrak M_2$ in the $n=4$ case) in the right-hand side of these relations; all such functions will arise only in $\mathbb Z_2$-symmetric combinations (like $\mathfrak M_1+\mathfrak M_2$ or $\mathfrak M_1\mathfrak M_2$).


Worth mentioning is that the genus two case is free of this subtlety   as all closed curves are $\mathbb Z_2$-symmetric (which reflects the well-known fact that genus two curves admit a hyperelliptic uniformization).

Note that the same problem of $\mathbb Z_2$ symmetry exists also in the case of $\mathcal A_n$ algebras with $n\ge 5$: if we do not restrict to geometrical Poisson leaves related to ideal triangulations of the  corresponding Riemann surfaces $\Sigma_{g,s}$, then expressions for non-$\mathbb Z_2$-symmetric elements of the algebra generally cease to be Laurent  polynomial.

{
\begin{conjecture}
The cluster algebra constructed in Section~\ref{ss:genus3} for the extended amalgamated quiver for $GL_4$  gives a description of the $\mathbb{Z}_2$-quotient of the Teichmuller space ${\mathcal T}_{3,0}$ of flat $SL_2(\mathbb C)$-connections on a smooth Riemann surface of genus three in such a way that
\begin{itemize}
\item[{(A)}] to every $Z_2$-invariant geodesic $\gamma$ we set into the correspondence a geodesic function $G_\gamma\in Z_+[a_1^{\pm 1/2},\dots, a_t^{\pm 1/2}]$ that is an element of an upper cluster algebra.
\item[{(B)}] all thus constructed $G_\gamma$ satisfy skein relations and Poisson Goldman bracket.
\item [{(C)}] in order to satisfy the rank condition, which is a necessary condition for $U_{i,j}=\tr M_i^{-1}M_j$ wtih $M_i\in SL_2(\mathbb C)$, we have to set Casimir $C=-1$.
\end{itemize}
\end{conjecture}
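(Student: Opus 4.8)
The plan is to prove all three parts simultaneously by constructing, for generic positive cluster data on the extended $GL_4$ quiver subject to $C=-1$, an explicit flat $SL_2(\mathbb{C})$-connection on $\Sigma_{3,0}$ whose trace functions reproduce the cluster geodesic functions, thereby exhibiting the correspondence with $\mathcal{T}_{3,0}/\mathbb{Z}_2$. I would follow the template of the proof of Theorem~\ref{genus-2}: fix two fibers $\mathbb{R}^2_\alpha$, $\mathbb{R}^2_\beta$ and seek eight transport matrices $M_1,\dots,M_8\in SL_2(\mathbb{C})$ with $U_{i,j}=\operatorname{tr}(M_iM_j^{-1})$ equal to the entries of the matrix $U\in\A_8$ assembled from the seven generators~(\ref{eq:G-sequence}) by the skein/Goldman recursion $U_{i,j}=\tfrac12U_{i,k}U_{k,j}+\{U_{i,k},U_{k,j}\}$. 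Gauge-fixing $M_1=\mathrm{Id}$, diagonalizing $M_2$, and normalizing $M_3$ exhausts the conjugation freedom; the remaining entries of $M_4,\dots,M_8$ are then determined by linear systems from the trace conditions $U_{1,j}$, $U_{2,j}$ exactly as in the genus-two case, with the quadratic data fixed through the $\mathfrak{M}_{abc}$-type relations. The surviving identities $U_{i,j}=\operatorname{tr}(M_iM_j^{-1})$ for $i\ge 3$ and triviality of the total commutator monodromy $\prod[\,\cdot\,,\cdot\,]=\mathrm{Id}$ become the compatibility constraints whose solvability is precisely the rank condition.

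For part (A) I would argue that every $\mathbb{Z}_2$-invariant geodesic function is a positive Laurent polynomial in the $a_i^{\pm1/2}$. The generators in~(\ref{eq:G-sequence}) are manifestly positive Laurent (they are telescopic sums $\langle\cdots\rangle$), and any further geodesic function is produced from them by the recursion above; since the cluster Poisson bracket is log-canonical, the bracket of two Laurent monomials is again a Laurent monomial, and one must check that the skein resolution $\tfrac12PQ+\{P,Q\}$ of two positive path-sums is the positive path-sum attached to the resolved curve — this is the geometric content of the network/path-sum formula of Fig.~\ref{fi:A-net} and~\cite{Fock93}. The $\mathbb{Z}_2$-symmetry is indispensable here: as explained in Sec.~\ref{ss:Z2}, only symmetric curves have path-sums closing into a single Laurent polynomial, while non-symmetric ones (individual $\mathfrak{M}_1,\mathfrak{M}_2$) appear only through symmetric combinations. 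Part (B) then follows with little extra work: the skein relations hold by the very definition of the $U_{i,j}$, their consistency (independence of the intermediate index $k$) is the associativity of the $SL_2$ skein algebra once the $U_{i,j}$ are known to be honest trace functions, and the identification of the induced cluster bracket with the Goldman bracket is inherited from Theorem~\ref{thm:reflectionPB}, which already establishes that the cluster structure induces the reflection bracket on $\A_8$ — the $SL_2$ trace-function avatar of the Goldman bracket.

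For part (C) the key observation is that the symmetrized matrix $U+U^{\mathrm{T}}$ is, up to conjugation by the diagonal sign matrix $\operatorname{diag}((-1)^k)$ relating the unipotent entries to the bare traces, the Gram matrix $\bigl(\operatorname{tr}(M_i^{-1}M_j)\bigr)_{i,j}$. Writing each $M_k$ as a vector in the four-dimensional space of $2\times2$ matrices and using the fixed symmetric pairing $\langle A,B\rangle=\operatorname{tr}(A^{-1}B)$ (symmetric on $SL_2$ since $\operatorname{tr}(A^{-1}B)=\operatorname{tr}A\,\operatorname{tr}B-\operatorname{tr}(AB)=\operatorname{tr}(AB^{-1})$), this Gram matrix — hence $U+U^{\mathrm{T}}$ — has rank at most $4$ whenever the $M_k$ exist. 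This reproduces the Nelson--Regge bound~\cite{NR93} and shows the rank condition is necessary for representability. To pin the value of the Casimir I would expand the $5\times5$ minors of $U+U^{\mathrm{T}}$ as Laurent polynomials in the $a_i^{1/2}$ and prove each is divisible by $(C+1)$ with generically non-vanishing quotient; the symbolic Maple check reported in Sec.~\ref{ss:genus3} is exactly this statement, and the task is to promote it to an identity by exhibiting the common factor $(C+1)$ explicitly — e.g.\ via the eigenvalue structure of $\mathbb{A}^{-\mathrm{T}}\mathbb{A}$ from Theorem~4.4 of~\cite{ChM2}, whose extra $-1$ eigenvalues responsible for the rank drop materialize precisely at $C=-1$.

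The main obstacle is part (A) in full generality, together with surjectivity of the correspondence onto $\mathcal{T}_{3,0}/\mathbb{Z}_2$. Laurent positivity of boundary-free trace functions cannot be read off an ideal triangulation (there is none for a closed surface), so the path-sum argument must be carried out on the amalgamated Moebius-type network of Figs.~\ref{fi:SL4}--\ref{fi:SL4-1}, and one must prove that the resolution of every admissible crossing preserves positivity for the entire infinite family of $\mathbb{Z}_2$-invariant classes. Equally delicate is showing the constructed map is \emph{onto} the quotient and not merely into it: this needs a dimension count (the quiver modulo the single Casimir has the expected $6g-6=12$ parameters) together with an argument that the $\mathbb{Z}_2$-fixed data and the forced value $C=-1$ cut out exactly the hyperelliptic-invariant locus. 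I expect (B) and the necessity direction of (C) to be comparatively routine given Theorem~\ref{thm:reflectionPB} and the Gram-matrix argument, with the genuine difficulty concentrated in the positivity and surjectivity claims underlying (A).
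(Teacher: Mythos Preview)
The statement you are addressing is labeled a \emph{Conjecture} in the paper, and the paper provides no proof of it. What the paper does offer is: (i) the explicit construction of the extended quiver, the seven generating geodesic functions~(\ref{eq:G-sequence}), and $G_B$ in Section~\ref{ss:genus3}; (ii) a symbolic Maple verification that the rank condition $\operatorname{rank}(U+U^{\mathrm T})\le 4$ holds iff the Casimir equals $-1$; and (iii) in the Hamiltonian-reduction subsection, a separate computer check that $\det(\mathbb A+\mathbb A^{\mathrm T})=0$ is preserved by the twists when $C=-1$. No argument is given for (A) or (B), and (C) rests entirely on symbolic computation. So there is nothing in the paper for your proposal to be compared against; you are sketching an attack on an open statement.

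As a strategy, your outline is sensible and tracks the genus-two template closely, but it does not close the gaps that make this a conjecture rather than a theorem. Your treatment of (C) is the most solid piece: the bilinear-form argument for $\operatorname{rank}\le 4$ is correct once you phrase it via the genuinely bilinear pairing $\langle A,B\rangle=\operatorname{tr}(A\,\mathrm{adj}\,B)$ on $2\times 2$ matrices (your formulation $\operatorname{tr}(A^{-1}B)$ is not bilinear in $A$, though on $SL_2$ it coincides with the bilinear expression). That gives necessity of the rank bound. However, the step ``show each $5\times5$ minor is divisible by $(C+1)$ with generically nonzero quotient'' is precisely the content of the Maple check, and invoking the eigenvalue structure from \cite{ChM2} does not by itself produce the factorization in the extended $13$-variable quiver --- that theorem concerns $\mathbb A\in\A_n$, not the enlarged $U\in\A_8$ built with the extra variable $a_t$. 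You would still need an honest algebraic identity, which neither you nor the paper supplies.

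For (A) and the surjectivity onto $\mathcal T_{3,0}/\mathbb Z_2$, you correctly diagnose these as the crux, but your proposal amounts to restating what must be shown: that the skein resolution of \emph{every} $\mathbb Z_2$-invariant crossing preserves the positive-Laurent path-sum form on the M\"obius-amalgamated network, and that the $12$-parameter family (after $C=-1$) hits the whole quotient. Neither claim is established by the path-sum formalism of Fig.~\ref{fi:A-net}, which is set up for the \emph{entries} $a_{i,j}$ of $\mathbb A$ and $\widetilde{\mathbb A}$ separately, not for arbitrary $\mathbb Z_2$-invariant words in the $M_k$; and the closed-surface setting has no ideal triangulation to fall back on. Your remark that (B) is ``comparatively routine'' via Theorem~\ref{thm:reflectionPB} also overreaches: that theorem gives the reflection bracket on $\A_n$, but the matrix $U$ lives in $\A_8$ and is built \emph{by fiat} from the skein/Poisson recursion, so consistency (independence of the intermediate index $k$, compatibility with the genuine Goldman bracket on $\mathcal T_{3,0}$) is exactly what (B) asserts and is not automatic. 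In short, your plan is a reasonable roadmap, but the hard steps remain open --- which is why the authors recorded the statement as a conjecture.
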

}

\subsection{Braid group transformations for $n>3$}


Consider a subgraph in the ``symmetric'' quiver: it goes along a ``zig-zag'' path passing through $2n-2$ vertices such that the first two vertices $a_1$ and $a_2$ and the last two vertices $a_{2n-3}$ and $a_{2n-2}$ are on the outer boundary of the quiver. We also include the vertex $a_t$ and all vertices $b_1,\dots, b_{2n-4}$ adjacent to the vertices $a_i$; in the symmetric quiver (say, in Fig.~\ref{fi:SL4}), these vertices are situated alternatively on the right and on the left from the ``zig-zag'' path. The pattern that we have looks as follows:
$$
\begin{pspicture}(-5,-2.5)(5,1.5){\psset{unit=1.2}
\multiput(-5,0)(1,0){9}{\psline[linecolor=blue,linewidth=2pt]{->}(0.1,0)(.9,0)}
\multiput(-5,0)(1,0){8}{\psline[linecolor=blue,linewidth=2pt]{<-}(0.07,0.07)(.93,0.93)}
\multiput(-3,0)(1,0){8}{\psline[linecolor=white,linewidth=3pt]{->}(-0.07,0.07)(-0.93,0.93)\psline[linecolor=blue,linewidth=2pt]{->}(-0.07,0.07)(-0.93,0.93)}
\psbezier[linecolor=blue,linewidth=2pt]{->}(-4.93,-0.07)(-4,-1)(2,-1)(3.9,-0.03)
\psline[linecolor=red,linewidth=2pt]{->}(-5,-1.9)(-5,-0.1)
\psline[linecolor=red,linewidth=2pt]{<-}(-4.95,-1.91)(-4.03,-0.08)
\psbezier[linecolor=red,linewidth=2pt]{->}(-4.9,-1.93)(-4,-1)(2,-2)(2.95,-0.09)
\psbezier[linecolor=red,linewidth=2pt]{<-}(-4.9,-2)(0,-2)(2,-2)(3.93,-0.07)
%
\multiput(-5,0)(1,0){10}{\pscircle[fillstyle=solid,fillcolor=lightgray]{.1}}
\multiput(-4,1)(1,0){8}{\pscircle[fillstyle=solid,fillcolor=lightgray]{.1}}
\rput(-5,-2){\pscircle[fillstyle=solid,fillcolor=red]{.1}}
\psframe[linecolor=white, fillstyle=solid, fillcolor=white](0.3,1.4)(1.6,-0.3)
\multiput(0.7,0.3)(0.3,0){3}{\pscircle[fillstyle=solid,fillcolor=black]{.05}}
\put(-5,0.15){\makebox(0,0)[bc]{\hbox{{$a_1$}}}}
\put(-3.9,-0.1){\makebox(0,0)[tl]{\hbox{{$a_2$}}}}
\put(-2.9,-0.1){\makebox(0,0)[tl]{\hbox{{$a_3$}}}}
\put(-1.9,-0.1){\makebox(0,0)[tl]{\hbox{{$a_4$}}}}
\put(-0.9,-0.1){\makebox(0,0)[tl]{\hbox{{$a_5$}}}}
\put(0.1,-0.1){\makebox(0,0)[tl]{\hbox{{$a_6$}}}}
\put(2,-0.15){\makebox(0,0)[tc]{\hbox{{$a_{2n{-}4}$}}}}
\put(2.9,0.1){\makebox(0,0)[bl]{\hbox{{$a_{2n{-}3}$}}}}
\put(4.1,0.1){\makebox(0,0)[bl]{\hbox{{$a_{2n{-}2}$}}}}
\put(-4.1,1.1){\makebox(0,0)[br]{\hbox{{$b_1$}}}}
\put(-3.1,1.1){\makebox(0,0)[br]{\hbox{{$b_2$}}}}
\put(-2.1,1.1){\makebox(0,0)[br]{\hbox{{$b_3$}}}}
\put(-1.1,1.1){\makebox(0,0)[br]{\hbox{{$b_4$}}}}
\put(-0.1,1.1){\makebox(0,0)[br]{\hbox{{$b_5$}}}}
\put(2,1.15){\makebox(0,0)[bc]{\hbox{{$b_{2n{-}5}$}}}}
\put(3,1.15){\makebox(0,0)[bl]{\hbox{{$b_{2n{-}4}$}}}}
\put(-5.1,-2.1){\makebox(0,0)[cr]{\hbox{{$a_{t}$}}}}
}
\end{pspicture}
$$
To this subgraph we set into the correspondence a matrix element (a geodesic function) $\langle  a_1a_2\cdots a_{2n-3}a_{2n-2} \rangle$. 
We now perform mutations consecutively at vertices $a_2$, $a_3$, $\dots$, $a_{2n-4}$, $a_{2n-3}$. The resulting subgraph is  
$$
\begin{pspicture}(-5,-3)(5,2){\psset{unit=1.2}
\multiput(-4,0)(1,0){7}{\psline[linecolor=blue,linewidth=2pt]{->}(0.1,0)(.9,0)}
\psline[linecolor=blue,linewidth=2pt]{<-}(3.1,0)(3.9,0)
\multiput(-4,0)(1,0){6}{\psline[linecolor=blue,linewidth=2pt]{<-}(0.09,0.05)(1.91,0.95)}
\multiput(-3,0)(1,0){7}{\psline[linecolor=white,linewidth=3pt]{->}(0,0.1)(0,0.9)\psline[linecolor=blue,linewidth=2pt]{->}(0,0.1)(0,0.9)}
\psline[linecolor=blue,linewidth=2pt]{<-}(-3.9,1)(-3.1,1)
\psline[linecolor=white,linewidth=3pt]{->}(-3.93,0.93)(-3.07,0.07)
\psline[linecolor=blue,linewidth=2pt]{->}(-3.93,0.93)(-3.07,0.07)
\psbezier[linecolor=blue,linewidth=2pt]{<-}(-3.91,1.05)(-3,1.5)(-3,1.5)(-2.09,1.05)
\psbezier[linecolor=blue,linewidth=2pt]{<-}(-4.99,-0.05)(-3,-1)(2,-1)(2.9,-0.03)
\psbezier[doubleline=true,linewidth=1pt, doublesep=1pt, linecolor=blue]{->}(-4.95,-0.09)(-4,-1.8)(2,-1.8)(3.95,-0.09)
\psline[linecolor=red,linewidth=2pt]{->}(-4.95,-1.91)(-4.03,-0.08)
\psbezier[linecolor=red,linewidth=2pt]{<-}(-4.9,-2.05)(-3,-3)(2,-2)(2.95,-0.09)
\psbezier[linecolor=red,linewidth=2pt]{->}(-2.97,0.93)(-1,-1)(-4,-2)(-4.9,-2)
%
\multiput(-5,0)(1,0){10}{\pscircle[fillstyle=solid,fillcolor=lightgray]{.1}}
\multiput(-4,1)(1,0){8}{\pscircle[fillstyle=solid,fillcolor=lightgray]{.1}}
\rput(-5,-2){\pscircle[fillstyle=solid,fillcolor=red]{.1}}
\psframe[linecolor=white, fillstyle=solid, fillcolor=white](0.3,1.4)(1.6,-0.3)
\multiput(0.7,0.3)(0.3,0){3}{\pscircle[fillstyle=solid,fillcolor=black]{.05}}
\psbezier[linecolor=red,linewidth=2pt]{->}(-5.05,-1.92)(-7,2)(-1,3)(1.93,0.07)
\put(-5,0.15){\makebox(0,0)[bc]{\hbox{{$a_1$}}}}
\put(-3.9,-0.1){\makebox(0,0)[tl]{\hbox{{$a_2$}}}}
\put(-2.9,-0.1){\makebox(0,0)[tl]{\hbox{{$a_3$}}}}
\put(-1.9,-0.1){\makebox(0,0)[tl]{\hbox{{$a_4$}}}}
\put(-0.9,-0.1){\makebox(0,0)[tl]{\hbox{{$a_5$}}}}
\put(0.1,-0.1){\makebox(0,0)[tl]{\hbox{{$a_6$}}}}
\put(2,-0.15){\makebox(0,0)[tc]{\hbox{{$a_{2n{-}4}$}}}}
\put(3.1,0.1){\makebox(0,0)[bl]{\hbox{{$a_{2n{-}3}$}}}}
\put(4.1,0.1){\makebox(0,0)[bl]{\hbox{{$a_{2n{-}2}$}}}}
\put(-4.1,0.9){\makebox(0,0)[br]{\hbox{{$b_1$}}}}
\put(-3,1.1){\makebox(0,0)[bl]{\hbox{{$b_2$}}}}
\put(-1.9,1.1){\makebox(0,0)[bl]{\hbox{{$b_3$}}}}
\put(-1,1.15){\makebox(0,0)[bc]{\hbox{{$b_4$}}}}
\put(-0.1,1.1){\makebox(0,0)[br]{\hbox{{$b_5$}}}}
\put(2,1.15){\makebox(0,0)[bc]{\hbox{{$b_{2n{-}5}$}}}}
\put(3,1.15){\makebox(0,0)[bl]{\hbox{{$b_{2n{-}4}$}}}}
\put(-5.1,-2.1){\makebox(0,0)[cr]{\hbox{{$a_{t}$}}}}
}
\end{pspicture}
$$
and it contains a ``papillon wing'' subgraph with vertices $a_1$, $a_{2n-2}$, and $a_{2n-3}$. Upon the same chain of mutations, the matrix element $\langle a_1\cdots a_{2n-2}. \rangle$ becomes just $\langle a_1a_{2n-2} \rangle$. Then, mutation at $a_1$ or $a_{2n-2}$ results in the braid-group transformation (direct or inverse). So the chain of mutations that correspond to this twist in the ``symmetric'' quiver is
\be\label{twst}
\mu_{a_2}\mu_{a_3}\cdots \mu_{a_{2n-3}}\mu_{a_{2n-2}} \mu_{a_{2n-3}}\cdots  \mu_{a_3}\mu_{a_2},
\ee
or a similar chain with replacing $\mu_{a_{2n-2}}$ by $\mu_{a_1}$. 


After the sequence of mutations (\ref{twst}) we obtain the same quiver in which the transformed variables $a'_i$ and $b'_j$ are
\begin{align}
&a'_i=a_i\frac{\eta_{i+1}}{\eta_{i-1}},\ i=2,\dots,2n-3,\quad a'_1=\frac{\eta_2}{a_{2n-2}a_{2n-3}\cdots a_2},\quad a'_{2n-2}=\frac{a_1a_2\cdots a_{2n-3}a^2_{2n-2}}{\eta_1\eta_{2n-3}},\label{a-t}\\
&b'_j=b_j\frac{\eta_{j}}{\eta_{j+2}},\ j=1,\dots, 2n-4 \quad \left( a'_t=a_t\frac{\eta_1\eta_{2n-3}}{\eta_2a_1a_{2n-2}} \right)\label{b-t},
\end{align}
where
\begin{align}
\eta_{2n-2}&=1,\nonumber\\
\eta_{2n-3}&=1+a_{2n-2},\nonumber\\
\eta_{2n-4}&=1+a_{2n-2}+a_{2n-2}a_{2n-3},\label{eta}\\
&\vdots \nonumber\\
\eta_1&=1+a_{2n-2}+a_{2n-2}a_{2n-3}+\cdots + a_{2n-2}a_{2n-3}\cdots a_3a_2,\nonumber
\end{align}
(note the absence of $a_1$ in formulas for $\eta_i$.) 

\begin{lemma}\label{lm:twst}
Transformations (\ref{a-t}), (\ref{b-t}) with $\eta_k$ defined by (\ref{eta}) correspond to the braid-group transformation with the matrix element $G_{i,i+1}=\langle a_1 a_2\cdots a_{2n-2}\rangle$.
\end{lemma}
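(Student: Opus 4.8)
The plan is to split the proof into two parts: first, to confirm that the composite mutation (\ref{twst}) returns the quiver to itself with the cluster variables transformed exactly as in (\ref{a-t}), (\ref{b-t}); and second, to identify the resulting transformation with the braid generator $\beta_i$, realized on the matrix $\mathbb A$ as $\mathbb A\mapsto B_i(\mathbb A)\mathbb A B_i(\mathbb A)^{\text T}$ and on geodesic functions as the Dehn twist along the loop $\gamma_{i,i+1}$ whose geodesic function is $G_{i,i+1}=\langle a_1 a_2\cdots a_{2n-2}\rangle$.

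For the first part I would proceed by a telescoping induction along the ``zig-zag'' path, applying the mutation rule $\mu_f\{f,a,b,c,d\}=\{f^{-1},a(1+f^{-1})^{-1},b(1+f^{-1})^{-2},c(1+f),d(1+f)^2\}$ one vertex at a time. Mutating consecutively at $a_2,a_3,\dots,a_{2n-2}$ and then back through $a_{2n-3},\dots,a_2$ accumulates the partial products that are precisely the quantities $\eta_k$ of (\ref{eta}); the asymmetry between the two passes is what produces the shifted ratios $\eta_{i+1}/\eta_{i-1}$ in (\ref{a-t}) and $\eta_j/\eta_{j+2}$ in (\ref{b-t}). The absence of $a_1$ from all $\eta_k$ reflects that the path ends on the outer boundary at $a_1$, which is mutated only implicitly through the closing ``papillon wing.'' This bookkeeping, together with the observation that the arrow pattern is restored after the full sequence, establishes (\ref{a-t})--(\ref{b-t}).

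For the second part I would first verify that $G_{i,i+1}=\langle a_1\cdots a_{2n-2}\rangle$ is \emph{invariant} under (\ref{a-t}), (\ref{b-t}). Substituting $a'_k=a_k\,\eta_{k+1}/\eta_{k-1}$ into the ordered telescopic sum, the consecutive $\eta$-ratios cancel and the overall $(a'_1\cdots a'_{2n-2})^{1/2}$ prefactor collapses back to $(a_1\cdots a_{2n-2})^{1/2}$, so the twist fixes its own core curve, exactly as a Dehn twist must. It then remains to match the action on the adjacent chain generators $G_{i-1,i}$ and $G_{i+1,i+2}$ (and their tilded counterparts), which intersect $\gamma_{i,i+1}$ transversally once. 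I would compute these two ways: by applying (\ref{a-t})--(\ref{b-t}) to the corresponding network path-sums of Fig.~\ref{fi:A-net}, and by computing the entries of $B_i(\mathbb A)\mathbb A B_i(\mathbb A)^{\text T}$ directly from the $2\times 2$ block of $B_i$, checking that the two results coincide. Since the entire Goldman algebra of geodesic functions is generated from the chain $\{G_{k,k+1}\}$ through the skein and Poisson relations recalled above, and since both the cluster transformation and $\beta_i$ are Poisson morphisms, agreement on this generating set forces agreement on all $G_{k,l}$.

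The main obstacle is the second step of part two: matching, term by term, the half-integer powers of cluster variables appearing in the transformed path-sums against the polynomial action of the braid matrix $B_i$. Each path-sum is a finite explicit object, so the computation is mechanical, but the combinatorics of how the $\eta_k$ factors redistribute the powers $0,\pm\tfrac12$ among faces lying above and below each admissible path is delicate, and keeping the $\mathbb A$- and $\widetilde{\mathbb A}$-sectors consistent under the Moebius-type gluing requires care. The conceptual point that makes the argument finite, rather than a check of infinitely many loops, is the generation-by-skein-relations statement, which localizes the entire verification to the invariance of $\langle a_1\cdots a_{2n-2}\rangle$ and its effect on the two neighboring generators.
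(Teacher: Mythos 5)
Your overall skeleton matches the paper's: the paper likewise first checks, by the same telescoping cancellation you describe, that $G_{i,i+1}=\langle a_1\cdots a_{2n-2}\rangle$ is fixed by (\ref{a-t}), and then uses the fact that the transformation is a Poisson morphism to reduce everything to the chain generators. The main methodological difference is in how the adjacent generators are handled: you propose comparing the transformed path-sums term by term against the entries of $B_i(\mathbb A)\mathbb A B_i(\mathbb A)^{\text{T}}$, while the paper avoids that heavy bookkeeping by computing the single Poisson bracket $\{G_{i-1,i},G_{i,i+1}\}$ in cluster variables and showing $\{G_{i-1,i},G_{i,i+1}\}=\frac12 G_{i-1,i}G_{i,i+1}-G'_{i-1,i}$; combined with the skein relation $G_{i-1,i+1}=\frac12 G_{i-1,i}G_{i,i+1}+\{G_{i-1,i},G_{i,i+1}\}$ this immediately gives $G'_{i-1,i}=G_{i-1,i}G_{i,i+1}-G_{i-1,i+1}$, the braid-group law, without ever needing an independent cluster expression for $G_{i-1,i+1}$. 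Your matrix-entry comparison would in principle work, but it requires exactly the ``delicate'' redistribution of half-integer powers you flag as the main obstacle; the bracket route produces the resolved term $G_{i-1,i+1}$ for free.

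Two points need repair. First, the tilded generators $\widetilde G_{l,l+1}$ do \emph{not} intersect $\gamma_{i,i+1}$ transversally once --- they lie on the other half of the surface, are disjoint from $\gamma_{i,i+1}$, and must be shown \emph{invariant} rather than matched against a nontrivial braid action; the same invariance must be established for $G_{l,l+1}$ with $|l-i|\ge 2$, which your plan never addresses. The paper disposes of all of these at once by one observation your proposal lacks: for any path $\cdots\to b_k\to a_k\to a_{k+1}\to b_{k-1}\to\cdots$ crossing the zig-zag, both the product $b_k a_k a_{k+1} b_{k-1}$ and the combination $\frac1{b_k}+\frac1{b_k a_k}+\frac1{b_k a_k a_{k+1}}$ are invariant under (\ref{a-t})--(\ref{b-t}). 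Without this (or an equivalent check) your ``agreement on the generating set'' is only established for the three generators you explicitly treat, and the Poisson-morphism argument cannot close the gap, since the $b'_j$ enter the expressions for the distant generators as well. Second, your part one --- that the sequence (\ref{twst}) returns the quiver to itself with variables transformed by (\ref{a-t})--(\ref{b-t}) --- is stated in the paper before the lemma and is not part of its proof; including your induction is harmless, but the lemma's actual content is the identification with the braid action, so the burden of the argument sits where the paper puts it.
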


\proof
We begin with demonstrating that $G_{i,i+1}$ is preserved under transformation (\ref{a-t}):
\begin{align*}
G'_{i,i+1}= &(a'_1\cdots a'_{2n-2})^{1/2}\left(1+\frac 1{a'_1}+\frac 1{a'_1a'_2}+\cdots +\frac 1{a'_1a'_2\cdots a'_{2n-3}}+\frac 1{a'_1a'_2\cdots a'_{2n-3}a'_{2n-2}} \right)\\
=&(a_1\cdots a_{2n-2})^{1/2}\frac1{\eta_1}\left(1+\frac{a_{2n-2}\cdots a_2}{\eta_2}+\frac{a_{2n-2}\cdots a_3 \eta_1}{\eta_2\eta_3}+\frac{a_{2n-2}\cdots a_4}{\eta_1\eta_2}{\eta_2\eta_3\eta_4}+\right.\\
&\left. + \frac{a_{2n-2}\cdots a_5}{\eta_1\eta_2\eta_3}{\eta_2\eta_3\eta_4\eta_5}+\cdots +\frac{a_{2n-2 \eta_1}}{\eta_{2n-3}\eta_{2n-2}}+\frac{\eta_1^2 \eta_{2n-3}}{a_1a_2\cdots a_{2n-2}\eta_{2n-3}\eta_{2n-2}}\right)\\
=&(a_1\cdots a_{2n-2})^{1/2}\left(\frac{\eta_2+a_{2n-2}\cdots a_2}{\eta_1\eta_2}+\frac{a_{2n-2}\cdots a_3}{\eta_2\eta_3}+\cdots +\frac{a_{2n-2}}{\eta_{2n-3}}+\frac{\eta_1}{a_1a_2\cdots a_{2n-2}}\right)\\
=&(a_1\cdots a_{2n-2})^{1/2}\left(\left[\frac{1}{\eta_2}+\frac{a_{2n-2}\cdots a_3}{\eta_2\eta_3}\right]+\cdots +\frac{a_{2n-2}}{\eta_{2n-3}}+\frac{\eta_1}{a_1a_2\cdots a_{2n-2}}\right)\\
=&(a_1\cdots a_{2n-2})^{1/2}\left(\left[\frac{1}{\eta_3}+\frac{a_{2n-2}\cdots a_4}{\eta_3\eta_4}\right]+\cdots +\frac{a_{2n-2}}{\eta_{2n-3}}+\frac{\eta_1}{a_1a_2\cdots a_{2n-2}}\right)\\
=&\cdots =(a_1\cdots a_{2n-2})^{1/2}\left(\left[\frac{1}{\eta_{2n-3}}+\frac{a_{2n-2}}{\eta_{2n-3}}\right]+\frac{\eta_1}{a_1a_2\cdots a_{2n-2}}\right)\\
=&(a_1\cdots a_{2n-2})^{1/2}\left(1+\frac{\eta_1}{a_1a_2\cdots a_{2n-2}}\right)\\
=&(a_1\cdots a_{2n-2})^{1/2}\left(1+\frac{1}{a_1} + \frac{1}{a_1a_2}+\cdots + \frac{1}{a_1a_2\cdots a_{2n-2}}\right)=G_{i,i+1}.
\end{align*}
Next, since the transformation (\ref{a-t}), (\ref{b-t}) is a Poisson isomorphism, it preserves both the product and Poisson algebras, so it suffices to consider transformations for generating elements $G_{l,l+1}$, $\widetilde G_{l.l+1}$ and $G_B$. 

An important observation is that, for any path $\cdots \to b_k\to a_k\to a_{k+1}\to b_{k-1}\to \cdots$ crossing the path $a_1\to a_2\to \cdots \to a_{2n-2}$, both the product $b_k a_k a_{k+1} b_{k-1}$ and the combination $\frac1{b_k}+\frac1{b_ka_k}+\frac1{b_ka_ka_{k+1}}$ are preserved. Whereas the invariance of $b_k a_k a_{k+1} b_{k-1}$ is straightforward and left as an easy exercise,  the invariance of $\frac1{b_k}+\frac1{b_ka_k}+\frac1{b_ka_ka_{k+1}}$ follows from the chain of equalities,
\begin{align*}
&\frac1{b'_k}+\frac1{b'_ka'_k}+\frac1{b'_ka'_ka'_{k+1}}=\frac{\eta_{k+2}}{\eta_k b_k}+\frac{\eta_{k+2}\eta_{k-1}}{b_ka_k\eta_k\eta_{k+1}}+\frac{\eta_{k-1}}{b_ka_ka_{k+1}\eta_{k+1}}\\
&=\frac{\eta_{k+2}}{\eta_k b_k}\frac{a_k\eta_{k+1}+a_k\cdots a_{2n-2}+\eta_k}{a_k\eta_{k+1}}+\frac{a_k\cdots a_{2n-2}+a_{k+1}\cdots a_{2n-2}+\eta_{k+1}}{b_ka_ka_{k+1}\eta_{k+1}}\\
&=\frac{a_{k+2}\cdots a_{2n-2}}{b_k\eta_{k+1}}+\frac{a_{k+2}\cdots a_{2n-2}+\eta_{k+2}}{b_ka_k\eta_{k+1}}+\frac{\eta_{k+2}(\eta_{k+1}+a_{k+1}\cdots a_{2n-2})}{b_k\eta_k\eta_{k+1}}+\frac1{b_ka_ka_{k+1}}\\
&=\frac{a_{k+2}\cdots a_{2n-2}}{b_k\eta_{k+1}}+\frac{1}{b_ka_k}+\frac{\eta_{k+2}}{b_k\eta_{k+1}}+\frac1{b_ka_ka_{k+1}}\\
&=\frac{a_{k+2}\cdots a_{2n-2}+\eta_{k+2}}{b_k\eta_{k+1}}+\frac{1}{b_ka_k}+\frac1{b_ka_ka_{k+1}}\\
&=\frac{1}{b_k}+\frac{1}{b_ka_k}+\frac1{b_ka_ka_{k+1}}
\end{align*}
This implies that $\widetilde G_{l,l+1}$ for all $l$ as well as $G_{l,l+1}$ with $|l-i|\ge 2$ are invariant under this sequence of mutations. It remains to check the transformation laws for 
\be\label{Gi-1i}
G_{i-1,i}=\langle b_2 x_3 b_4 x_5\cdots x_{2n-5}b_{2n-4}b_1a_1a_2\rangle
\ee
and for 
\be\label{Gi+1i+2}
G_{i+1,i+2}=\langle a_{2n-3} a_{2n-2}b_{2n-4}b_1 x_2 b_3 x_3\cdots x_{2n-6} b_{2n-5}\rangle,
\ee
where we let $x_k$ denote variables not transformed by the above sequence of mutations (for brevity we often omit subscripts of $x$-variables in calculations below). After some algebra we obtain the transformed $G_{i-1,i}$ and $G_{i+1,i+2}$:
\begin{align}
G'_{i-1,i}=&\frac{(b_2xb_4x\cdots xb_{2n-4}b_1a_1a_2)^{1/2}}{(a_1a_2\cdots a_{2n-2})^{1/2}}\left(\eta_2+\frac{\eta_4}{b_2}+\frac{\eta_4}{b_2 x}+\frac{\eta_6}{b_2 x b_4}+\frac{\eta_6}{b_2 x b_4 x}+\cdots 
\right.\nonumber\\
\label{Gprime}
&\qquad \qquad \left.
+\frac{1}{b_2 x b_4 x\cdots b_{2n-4}}\left[1+\frac1{b_1}\right] \right),
\end{align}
and
\begin{align*}
G'_{i+1,i+2}=&\frac{(a_{2n-3}a_{2n-2}b_{2n-4}b_1xb_3x\cdots xb_{2n-5})^{1/2}}{(a_1a_2\cdots a_{2n-2})^{1/2}}\left( (a_1a_2\cdots a_{2n-2})\left[1+\frac{1}{a_{2n-3}}\right]+\frac{\eta_{2n-4}\eta_1}{a_{2n-2}a_{2n-3}}\right.\\
&\left. +\frac{1}{a_{2n-2}a_{2n-3}b_{2n-4}}\left[\eta_1 +\frac{\eta_3}{b_1}+\frac{\eta_3}{b_1 x}+\frac{\eta_5}{b_1 x b_3}+\frac{\eta_5}{b_1 x b_3 x}+\cdots +\frac{\eta_{2n-3}}{b_1 x b_3 x\cdots x b_{2n-5}} \right]\right).
\end{align*}
We present only the transformation law for $G'_{i-1,i}$, that for $G'_{i+1,i+2}$ can be obtained analogously.

We begin with the Poisson relations:
\begin{align}
\bigl\{ b_k, \langle a_1\cdots a_{2n-2}\rangle \bigr\}=&-b_k(a_1\cdots a_{2n-2})^{1/2}\left(\frac1{a_1\cdots a_k}+\frac1{a_1\cdots a_ka_{k+1}} \right), \ k=1,\dots,2n-4,\\
\bigl\{ a_1, \langle a_1\cdots a_{2n-2}\rangle \bigr\}=&a_1(a_1\cdots a_{2n-2})^{1/2}\left(1+\frac1{a_1}-\frac1{a_1\cdots a_{2n-2}} \right),\\
\bigl\{ a_2, \langle a_1\cdots a_{2n-2}\rangle \bigr\}=&a_2(a_1\cdots a_{2n-2})^{1/2}\left(\frac1{a_1}+\frac1{a_1a_2} \right).
\end{align}
For the Poisson bracket between non-transformed matrix elements, we then have:
\begin{align*}
&\{ G_{i-1,i},G_{i,i+1} \}=G_{i-1,i}(a_1\cdots a_{2n-2})^{1/2}\left[-\frac12\left(\frac1{a_1}+\frac{2}{a_1a_2}+\frac{1}{a_1a_2a_3}+\cdots +\frac{1}{a_1a_2\cdots a_{2n-3}}  \right)\right.\\
&\qquad\qquad\qquad\qquad  \left. +\frac12\left(1+\frac2{a_1} +\frac{1}{a_1a_2}-\frac{1}{a_1\cdots a_{2n-2}} \right)  \right]\\
&\quad + (b_2xb_4x\cdots xb_{2n-4}b_1a_1a_2)^{1/2}(a_1a_2\cdots a_{2n-2})^{1/2}\left\{ \left(\frac1{b_2}+\frac1{b_2x} \right)\left(\frac{1}{a_1a_2}+\frac{1}{a_1a_2a_3} \right)+\right.\\
&\quad  + \left(\frac1{b_2xb_4}+\frac1{b_2xb_4x} \right)\left(\frac{1}{a_1a_2}+\frac{1}{a_1a_2a_3}+\frac{1}{a_1a_2a_3a_4}+\frac{1}{a_1a_2a_3a_4a_5} \right)\\
&\quad +\cdots +\frac{1}{b_2xb_4x\cdots x b_{2n-4}} \left(\frac{1}{a_1a_2}+\frac{1}{a_1a_2a_3}+\cdots+\frac{1}{a_1a_2\cdots a_{2n-3}} \right)\\
&\quad +\frac{1}{b_2xb_4x\cdots x b_{2n-4}b_1} \left(\frac{1}{a_1a_2}+\frac{1}{a_1a_2a_3}+\cdots+\frac{1}{a_1a_2\cdots a_{2n-3}}+\frac{1}{a_1}+\frac{1}{a_1a_2}  \right)\\
&\quad +\frac{1}{b_2xb_4x\cdots x b_{2n-4}b_1a_1} \left(-1+\frac{2}{a_1a_2}+\frac{1}{a_1a_2a_3}+\cdots+\frac{1}{a_1a_2\cdots a_{2n-3}}+\frac{1}{a_1a_2\cdots a_{2n-3}a_{2n-2}} \right)\\
&\quad\left. +\frac{1}{b_2xb_4x\cdots x b_{2n-4}b_1a_1a_2} \left(-1-\frac{1}{a_1}+\frac{1}{a_1a_2}+\frac{1}{a_1a_2a_3}+\cdots+\frac{1}{a_1a_2\cdots a_{2n-3}a_{2n-2}} \right)\right\}\\
&=-\frac12 G_{i-1,i}G_{i,i+1}+G_{i-1,i} (a_1a_2\cdots a_{2n-2})^{1/2}\left(1+\frac{1}{a_1}\right)\\
&\quad + \frac{(b_2xb_4x\cdots xb_{2n-4}b_1a_1a_2)^{1/2}}{(a_1a_2\cdots a_{2n-2})^{1/2}}\left\{ \left(\frac1{b_2}+\frac1{b_2x} \right)(\eta_2-\eta_4) ++ \left(\frac1{b_2xb_4}+\frac1{b_2xb_4x} \right)(\eta_2-\eta_6)+\right. \\
&\qquad +\cdots +\frac1{b_2xb_4x\cdots xb_{2n-4}}(\eta_2-\eta_{2n-2})+\frac1{b_2xb_4x\cdots xb_{2n-4}b_1}(\eta_2-\eta_3 +\eta_1 -1)\\
&\quad + \frac1{b_2xb_4x\cdots xb_{2n-4}b_1a_1}(-a_1\cdots a_{2n-2}+2a_3\cdots a_{2n-2}+a_4\cdots a_{2n-2}+\cdots +a_{2n-2}+1)\\
&\quad +\left. \frac1{b_2xb_4x\cdots xb_{2n-4}b_1a_1a_2}(-a_1\cdots a_{2n-2}-a_2\cdots a_{2n-2}+a_3\cdots a_{2n-2}+a_4\cdots a_{2n-2}+\cdots +a_{2n-2}+1)\right\}\\
&=[\hbox{cf. (\ref{Gprime})}]\quad -\frac12 G_{i-1,i}G_{i,i+1}-G'_{i-1,i} + \frac{(b_2xb_4x\cdots xb_{2n-4}b_1a_1a_2)^{1/2}}{(a_1a_2\cdots a_{2n-2})^{1/2}}\left\{ \eta_2+\frac1{b_2xb_4x\cdots xb_{2n-4}b_1}+\right. \\
&+\left(\frac1{b_2}+\frac1{b_2x}+\cdots +\frac1{b_2x\cdots xb_{2n-4}b_1} \right)\eta_2 \\
&+ \left(1+\frac1{b_2}+\frac1{b_2x}+\cdots +\frac1{b_2x\cdots xb_{2n-4}b_1}+\frac1{b_2x\cdots xb_{2n-4}b_1a_1}+\frac1{b_2x\cdots xb_{2n-4}b_1a_1a_2} \right)(a_1\cdots a_{2n-2}+a_2\cdots a_{2n-2}) \\
&+\frac1{b_2xb_4x\cdots xb_{2n-4}b_1} (\eta_1-\eta_3-1)\\
&+ \frac1{b_2xb_4x\cdots xb_{2n-4}b_1a_1}(-a_1\cdots a_{2n-2}+2a_3\cdots a_{2n-2}+a_4\cdots a_{2n-2}+\cdots +a_{2n-2}+1)\\
&+\left. \frac1{b_2xb_4x\cdots xb_{2n-4}b_1a_1a_2}(-a_1\cdots a_{2n-2}-a_2\cdots a_{2n-2}+a_3\cdots a_{2n-2}+a_4\cdots a_{2n-2}+\cdots +a_{2n-2}+1)\right\}\\
&=-\frac12 G_{i-1,i}G_{i,i+1}-G'_{i-1,i} +G_{i-1,i}G_{i,i+1} + \frac{(b_2xb_4x\cdots xb_{2n-4}b_1a_1a_2)^{1/2}}{(a_1a_2\cdots a_{2n-2})^{1/2}}\left\{ \eta_2 -\left(1+\frac{1}{b_2\cdots b_1a_1}+\frac{1}{b_2\cdots b_1a_1a_2} \right)\eta_2 \right.\\
& +\left. \frac{1}{b_2\cdots b_1}(\eta_1-\eta_3) + \frac1{b_2\cdots b_1a_1}(-a_1\cdots a_{2n-2}+a_3\cdots a_{2n-2}+\eta_2)+\frac1{b_2\cdots b_1a_1a_2}(-a_1\cdots a_{2n-2}-a_2\cdots a_{2n-2}+\eta_2)\right\}\\
&=\frac12 G_{i-1,i}G_{i,i+1}-G'_{i-1,i}  + \frac{(b_2xb_4x\cdots xb_{2n-4}b_1a_1a_2)^{1/2}}{(a_1a_2\cdots a_{2n-2})^{1/2}}\left\{ \frac{1}{b_2\cdots b_1}(a_2\cdots a_{2n-2}+a_3\cdots a_{2n-2})\right.\\
&\left. - \frac{a_2\cdots a_{2n-2}}{b_2\cdots b_1}+\frac{a_3\cdots a_{2n-2}}{b_2\cdots b_1a_1}- \frac{a_3\cdots a_{2n-2}}{b_2\cdots b_1} -\frac{a_3\cdots a_{2n-2}}{b_2\cdots b_1a_1}\right\}=\frac12 G_{i-1,i}G_{i,i+1}-G'_{i-1,i}.
\end{align*}
We therefore obtain that 
\be
G'_{i-1,i} = \frac12 G_{i-1,i}G_{i,i+1} -\{ G_{i-1,i},G_{i,i+1} \}= G_{i-1,i}G_{i,i+1} - G_{i-1,i+1},
\ee
which is the correct transformation law under the braid-group transformations. This completes the proof.

We do not verify the twist along $G_B$; instead, we use it in the next section to show the preservation of the Hamiltonian reduction condition in the case $n=5$. 

Therefore, all the braid-group transformations along $G_{i,i+1}$, $\tilde G_{i,i+1}$, and $G_B$  can be realized as sequences of mutations in the (extended) $SL_n$ quiver. 

\begin{remark}
Note that the above twists (and the corresponding braid-group action) do not generate the full modular group for $n>3$: all these twists preserve the $\mathbb Z_2$ symmetry (which naturally holds for $g=2$ surfaces since they admit a hyperelliptic uniformization). For higher genera we can only generate a $\mathbb Z_2$ quotient of the full modular group. 

For the corresponding geodesic functions, for those geodesics that are $\mathbb Z_2$-symmetric, their geodesic functions remain Laurent polynomials in $Z_i^{\pm 1/2}$; in order to close the product and Poisson algebras of these $\mathbb Z_2$-symmetric geodesic functions, we need to add symmetric combinations of non-symmetric geodesic functions (an example is the Markov element(s) $\mathfrak M_1$ and $\mathfrak M_2$, which are mirror-symmetric w.r.t. the $\mathbb Z_2$ transformation for $n=4$: their sum $\mathfrak M_1+ \mathfrak M_2$ and their product $\mathfrak M_1 \mathfrak M_2$ are $\mathbb Z_2$ symmetric functions and are Laurent polynomials. However $|\mathfrak M_1+ \mathfrak M_2|=62$ and $|\mathfrak M_1 \mathfrak M_2|=417$, so there is no chance that each of $\mathfrak M_i$ be polynomial. 
\end{remark}

\subsection{Hamiltonian reduction}
\subsubsection{$n=3$}
We begin with a ``toy'' example of the Hamiltonian reduction carried out by imposing the condition $\det(\mathbb A+\mathbb A^{\text{T}})=0$ in $n=3$ case. In terms of the Markov element $\mathfrak M$, for which we have that $\det(\mathbb A+\mathbb A^{\text{T}})=8+2\mathfrak M$, written in three different ways (\ref{M3-1}), (\ref{M3-2}), and (\ref{A3}), this condition takes the respective forms
\begin{align}
\label{G-1}
&G_{1,2}G_{1,3}G_{2,3}-G^2_{1,2}-G^2_{1,3}-G^2_{2,3}+4=0 \\
\label{G-2}
&\widetilde G_{1,2}\widetilde G_{1,3}\widetilde G_{2,3}-\widetilde G^2_{1,2}-\widetilde G^2_{1,3}-\widetilde G^2_{2,3}+4=0,\\
\label{G-3}
&f\cdot\bigl( G_{1,2}\widetilde G_{1,2}G_{B} + G^2_{1,2} + \widetilde G^2_{1,2} + G^2_{B}-4\bigr)=0.
\end{align}
The first two conditions are well-known from the physics literature on $SL(2,\mathbb R)$-connections in the description of $(2+1)$ gravity on the manifold $\mathbb T^2\times \mathbb R$ with $\mathbb T^2$ a two-dimensional torus with a flat metric \cite{Carlip-Nelson}: the solution that induces braid-group and Poisson relations is
\begin{align}
&G_{1,2}=e^{X/2}+e^{-X/2},\quad G_{2,3}=e^{Y/2}+e^{-Y/2},\quad G_{1,3}=e^{X/2+Y/2}+e^{-X/2-Y/2},\quad \{ X,Y\}=2\label{chiral}\\
&\widetilde G_{1,2}=e^{\widetilde X/2}+e^{-\widetilde X/2},\quad \widetilde G_{2,3}=e^{\widetilde Y/2}+e^{-\widetilde Y/2},\quad \widetilde G_{1,3}=e^{\widetilde X/2+\widetilde Y/2}+e^{-\widetilde X/2-\widetilde Y/2},\quad \{ \widetilde X,\widetilde Y\}=2,\label{anti-chiral}
\end{align}
where the total algebra of connections splits into mutually commuting chiral--anti-chiral subalgebras, $SL(2,\mathbb R)\times SL(2,\mathbb R)$, each being a copy of the standard $SL(2,\mathbb R)$ algebra. We are interested in the third way (\ref{G-3}) of representing the Hamiltonian reduction condition: a solution $f=0$ looks not feasible, so we resort to the second choice that 
\be\label{GB-1}
G_{1,2}\widetilde G_{1,2}G_{B} + G^2_{1,2} + \widetilde G^2_{1,2} + G^2_{B}-4=0.
\ee
This equation differs from (\ref{G-1}) or (\ref{G-2}) only by the sign of $G_B$, so we solve it immediately:
\be\label{GB-2}
G_B=-e^{X/2+\widetilde X/2}-e^{-X/2-\widetilde X/2}.
\ee
Since expression (\ref{GB-1}) Poisson commutes with $G_B$, it is preserved by twists along $G_B$, so the reduction is Hamiltonian.

The quiver corresponding to this reduction consists of two disjoint parts, $\raisebox{-5pt}{\mbox{\begin{pspicture}(-1.5,-0.3)(1.5,0.7){\psset{unit=1}
\rput(-0.8,0){
\psline[doubleline=true,linewidth=1pt, doublesep=1pt, linecolor=blue]{->}(-0.4,0)(0.4,0)
\multiput(-0.5,0)(1,0){2}{\pscircle[fillstyle=solid,fillcolor=lightgray]{.1}}
\put(-0.5,0.15){\makebox(0,0)[bc]{\hbox{{$X$}}}}
\put(0.5,0.15){\makebox(0,0)[bc]{\hbox{{$Y$}}}}
}
\rput(0.8,0){
\psline[doubleline=true,linewidth=1pt, doublesep=1pt, linecolor=blue]{->}(-0.4,0)(0.4,0)
\multiput(-0.5,0)(1,0){2}{\pscircle[fillstyle=solid,fillcolor=lightgray]{.1}}
\put(-0.5,0.15){\makebox(0,0)[bc]{\hbox{{$\widetilde X$}}}}
\put(0.5,0.15){\makebox(0,0)[bc]{\hbox{{$\widetilde Y$}}}}
}
}
\end{pspicture}
}}.$ We were so far not able to identify the element $G_B$ in the corresponding $(2+1)$-dimensional geometry.

\subsubsection{$n=4$---genus three} In this case computer simulation shows that the condition $\det(\mathbb{A}+\mathbb{A}^\text{T})=0$ is Hamiltonian provided the Casimir~(\ref{eq:CasN4}) $C=-1$. Note that we obtained the same condition as required by the rank condition of Section~\ref{ss:genus3}.

\subsubsection{$n=5$---genus four}
Starting with genus four ($n=5$), the total dimension $n(n-1)$ of the quiver becomes greater than the dimension $6g-6=6n-12$ of the corresponding moduli space, that is, we encounter the same problem as in the case of $\Sigma_{g,s}$ with $s=1,2$: how to segregate Poisson leaves of the symplectic groupoid that correspond to geometrical systems? It is well-known since Nelson, Regge, and Zertuche works \cite{NR}, \cite{NRZ}, \cite{NR93} that the condition is that $\mathop{rank}(\mathbb A+\mathbb A^{\text{T}})\le 4$. For $n=5$, this condition is satisfied if
\be\label{det}
\det(\mathbb A+\mathbb A^{\text{T}})=0.
\ee
We have the following lemma
\begin{lemma}\label{lm:Ham}
The condition (\ref{det}) in the case $n=5$ is preserved by all braid-group transformations, i.e., it is Hamiltonian.
\end{lemma}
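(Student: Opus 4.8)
The plan is to check invariance of the condition (\ref{det}) on each generator of the transformation group, namely the twists along $G_{i,i+1}$ and along $\widetilde G_{i,i+1}$ ($i=1,\dots,4$) together with the twist along $G_B$, and to isolate the $G_B$--twist as the only case requiring real computation.

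First I would dispose of the twists along $G_{i,i+1}$. Such a twist is the braid generator $\beta_i\colon\mathbb A\mapsto B_i(\mathbb A)\,\mathbb A\,B_i(\mathbb A)^{\text T}$, where $B_i(\mathbb A)$ is block diagonal with the single nontrivial block $\left(\begin{smallmatrix}a_{i,i+1}&1\\-1&0\end{smallmatrix}\right)$, whose determinant is $a_{i,i+1}\cdot 0-1\cdot(-1)=1$; hence $\det B_i(\mathbb A)=1$. Because
$$
\beta_i(\mathbb A)+\beta_i(\mathbb A)^{\text T}=B_i(\mathbb A)\bigl(\mathbb A+\mathbb A^{\text T}\bigr)B_i(\mathbb A)^{\text T},
$$
one gets $\det\bigl(\beta_i(\mathbb A)+\beta_i(\mathbb A)^{\text T}\bigr)=\bigl(\det B_i(\mathbb A)\bigr)^2\det\bigl(\mathbb A+\mathbb A^{\text T}\bigr)=\det\bigl(\mathbb A+\mathbb A^{\text T}\bigr)$, so (\ref{det}) is preserved---in fact the entire polynomial $\det(\mathbb A+\mathbb A^{\text T})$ is invariant, not merely its zero locus. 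The twists along $\widetilde G_{i,i+1}$ are then immediate: they are Hamiltonian flows whose generators are functions of $\widetilde{\mathbb A}$, and since $\{\mathbb A,\widetilde{\mathbb A}\}=0$ by (\ref{A-At}) these flows fix every entry of $\mathbb A$ and therefore leave $\det(\mathbb A+\mathbb A^{\text T})$ unchanged. Consistently, $\det(\mathbb A+\mathbb A^{\text T})$ is the value at $\lambda=1$ of $\det(\mathbb A+\lambda\mathbb A^{\text T})$, whose coefficients were shown above to be Casimirs of the reflection bracket, so these two families of twists must preserve it.

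The hard part will be the twist along $G_B$. It is realised by the ``extended'' mutation (the $n=5$ analogue of (\ref{mut-ex})), it mixes the two halves $\mathbb A$ and $\widetilde{\mathbb A}$, and it involves the auxiliary gluing variable; consequently it is \emph{not} a determinant--one congruence of $\mathbb A$, and $\det(\mathbb A+\mathbb A^{\text T})$ is a Casimir only of the $\mathcal A_5$ reflection bracket, not of the enlarged bracket containing $G_B$. The geometric reason the condition should nonetheless survive is that $\operatorname{rank}(\mathbb A+\mathbb A^{\text T})\le 4$ singles out the locus of genuine $SL_2$--representations, which every mapping--class element preserves. To turn this into an algebraic statement I would show that (\ref{det}) is \emph{first class} for the $G_B$--flow, i.e.\ that $\{\det(\mathbb A+\mathbb A^{\text T}),G_B\}$ vanishes modulo $\det(\mathbb A+\mathbb A^{\text T})$.

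Concretely, the plan is to build the $n=5$ ``papillon--wing'' extension with its added Casimir (the analogue of (\ref{eq:CasN4})), write the transformed cluster variables under the $G_B$--twist explicitly, reassemble $\mathbb A'$ from the telescopic expressions for the $G_{i,j}'$, and verify $\det(\mathbb A'+\mathbb A'^{\text T})=\det(\mathbb A+\mathbb A^{\text T})$ by symbolic computation, fixing the value of the added Casimir exactly as the $n=4$ case forced $C=-1$. I expect this explicit verification---controlling the half--integer powers, the gluing variable, and the precise Casimir value---to be the genuine obstacle, the three remaining families of generators being handled by the soft conjugation argument above.
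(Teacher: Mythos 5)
Your handling of the easy generators is correct, and in fact more explicit than the paper's: the paper simply asserts that $\det(\mathbb A+\mathbb A^{\text{T}})=\det(\widetilde{\mathbb A}+\widetilde{\mathbb A}^{\text{T}})$ is invariant under the twists along $G_{i,i+1}$ and $\widetilde G_{i,i+1}$, while your congruence identity $\beta_i(\mathbb A)+\beta_i(\mathbb A)^{\text{T}}=B_i(\mathbb A)\bigl(\mathbb A+\mathbb A^{\text{T}}\bigr)B_i(\mathbb A)^{\text{T}}$ with $\det B_i(\mathbb A)=1$, together with the decoupling $\{\mathbb A,\widetilde{\mathbb A}\}=0$ of (\ref{A-At}), supplies exactly the right reason. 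The genuine gap is in the $G_B$ twist, which is where all the content of the lemma lives. First, your stated verification target, $\det(\mathbb A'+\mathbb A'^{\text{T}})=\det(\mathbb A+\mathbb A^{\text{T}})$, is \emph{false}: the paper's computation shows that under the twist, realized as the substitution (\ref{subst}), the determinant is rescaled by the nontrivial factor $\xi a_2^2$, so only the zero locus --- the condition (\ref{det}) itself --- survives, not the polynomial. A brute-force symbolic check of literal equality would come out nonzero and you would be left without a conclusion unless you had reorganized the computation to test proportionality. Second, your expectation that the added Casimir must be pinned to a special value ``as the $n=4$ case forced $C=-1$'' does not materialize for $n=5$: the paper works with the standard normalization $a_ta_1a_2^2a_3^2a_4^2b_1\cdots e_4=1$, and Hamiltonianity reduces to the three identities $\alpha=\xi^2\rho$, $\omega=2\xi\rho$, $\gamma=\xi\delta$, which hold identically (checked by computer) with no extra constraint.

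The paper's proof also contains a structural reduction that your plan omits and that makes the computation tractable. It passes to the chart of Fig.~\ref{fi:N5}, where $G_B=\langle a_1a_t\rangle$ and the generators $G_{1,2},G_{2,3},G_{3,4}$ are independent of $a_1,a_2$, so among the generators only $G_{4,5}$ fails to commute with $G_B$; it then computes $\tfrac12 G_BG_{4,5}+\{G_B,G_{4,5}\}$ directly from the structure (\ref{G45-s}) and identifies the twist as the two-variable substitution $(a_1,a_2)\mapsto(1/a_t,\,a_2(1+a_t))$. Since $\det(\mathbb A+\mathbb A^{\text{T}})$ depends on $G_{i,5}$ at most quadratically, this yields the six-term ansatz in $a_1,a_2$ whose covariance is equivalent to the three identities above, and finally the factorization (\ref{A5}), $\det(\mathbb A+\mathbb A^{\text{T}})=a_1\bigl(\alpha G_B^2+\gamma G_B+\beta-2\bigr)$, makes preservation of the locus manifest. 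Your fallback ``first-class'' formulation, $\{\det(\mathbb A+\mathbb A^{\text{T}}),G_B\}\equiv 0 \bmod \det(\mathbb A+\mathbb A^{\text{T}})$, is the right weaker notion, but it carries its own unaddressed burden: the twist in the cluster algebra is a specific discrete transformation (the analogue of (\ref{mut-ex})), and identifying it with the time-one flow of a Hamiltonian that is a function of $G_B$ on the whole cluster variety --- including off the geometric locus, which is precisely what (\ref{det}) cuts out --- is not established anywhere; the paper avoids this by verifying the discrete transformation directly.
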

\proof The expression $\det(\mathbb A+\mathbb A^{\text{T}})=\det(\widetilde{\mathbb  A}+\widetilde{\mathbb  A}^{\text{T}})$ is invariant under the braid-group transformations generated by $G_{i,i+1}$ and $\widetilde G_{i,i+1}$, so it remains to check preservation of the condition (\ref{det}) for the twist along $G_B$.  For this, we first transform the quiver for $n=5$ to the quiver in Fig.~\ref{fi:N5} in which $G_B$ has the simplest form. In this quiver,
\begin{align}
&G_B=\langle a_1a_t \rangle,\nonumber \\
&G_{1,2}=\langle b_4e_2d_3c_4 \rangle,\nonumber \\
&G_{2,3}=\langle c_4c_3b_3a_4e_3d_4 \rangle,\nonumber \\
&G_{3,4}=\langle d_4d_3d_2c_2b_2a_3e_4 \rangle,\nonumber \\
&G_{4,5}=(e_4e_3a_2a_3e_2a_4e_1d_1c_1b_1a_2a_1)^{1/2}\left[ 1+\frac{1}{e_4}+\frac{1}{e_4e_3}+\frac{1}{e_4e_3e_2}  \right.\nonumber \\
&\qquad  +\frac{1}{e_4a_2}+\frac{1}{e_4e_3a_2}+\frac{1}{e_4e_3e_2a_2}+\frac{1}{e_4e_3a_2a_3}+\frac{1}{e_4e_3e_2a_2a_3}+\frac{1}{e_4e_3e_2a_2a_3a_4\nonumber }\\
&\qquad +\frac{1}{e_4e_3e_2a_2a_3a_4e_1}+\frac{1}{e_4e_3e_2a_2a_3a_4e_1d_1}+\frac{1}{e_4e_3e_2a_2a_3a_4e_1d_1c_1}+\frac{1}{e_4e_3e_2a_2a_3a_4e_1d_1c_1b_1}\nonumber \\
&\qquad \left.+\frac{1}{e_4e_3e_2a_2a_3a_4e_1d_1c_1b_1a_2}+\frac{1}{e_4e_3e_2a_2a_3a_4e_1d_1c_1b_1a_2a_1}\right],\label{G-45}\\
&a_ta_1a_2^2a_3^2a_4^2b_1b_2b_3 b_4c_1c_2c_3 c_4d_1d_2d_3 d_4e_1e_2e_3 e_4=1.\nonumber 
\end{align}
Observe that $G_{1,2}$, $G_{2,3}$, and $G_{3,4}$ do not depend on $a_1$ and $a_2$ and therefore commute with $a_1$ and $a_t$.

\begin{figure}[H]
\begin{pspicture}(-5,-3)(4,3){\psset{unit=1.2}
\multiput(-2,1.7)(1,0){1}{\psline[linecolor=blue,linewidth=2pt]{<-}(0.1,0)(.9,0)}
\multiput(-1,1.7)(1,0){2}{\psline[linecolor=blue,linewidth=2pt]{->}(0.1,0)(.9,0)}
\multiput(-1.5,0.85)(0.5,-0.85){3}{\psline[linecolor=blue,linewidth=2pt]{->}(0.1,0)(.9,0)}
\multiput(1,0)(0.5,-0.85){3}{\psline[linecolor=blue,linewidth=2pt]{<-}(0.1,0)(.9,0)}
\multiput(0.5,-0.85)(0.5,-0.85){2}{\psline[linecolor=blue,linewidth=2pt]{<-}(0.1,0)(.9,0)}
\multiput(0,0)(-0.5,0.85){2}{\psline[linecolor=blue,linewidth=2pt]{->}(0.1,0)(.9,0)}
\multiput(1,1.7)(0.5,-0.85){4}{\psline[linecolor=blue,linewidth=2pt]{->}(0.05,-0.085)(.45,-0.765)}
\multiput(1,0)(0.5,-0.85){2}{\psline[linecolor=blue,linewidth=2pt]{->}(0.05,-0.085)(.45,-0.765)}
\multiput(0.5,-0.85)(0.5,-0.85){1}{\psline[linecolor=blue,linewidth=2pt]{->}(0.05,-0.085)(.45,-0.765)}
\multiput(-0.5,-.85)(-0.5,0.85){2}{\psline[linecolor=blue,linewidth=2pt]{->}(-0.05,0.085)(-.45,0.765)}
\multiput(0,0)(-0.5,0.85){1}{\psline[linecolor=blue,linewidth=2pt]{->}(-0.05,0.085)(-.45,0.765)}
\multiput(1,0)(-0.5,-0.85){3}{\psline[linecolor=blue,linewidth=2pt]{->}(-0.05,0.085)(-.45,0.765)}
\multiput(1,-1.7)(0.5,0.85){2}{\psline[linecolor=blue,linewidth=2pt]{->}(0.05,0.085)(.45,0.765)}
\multiput(2,-1.7)(0.5,0.85){1}{\psline[linecolor=blue,linewidth=2pt]{->}(0.05,0.085)(.45,0.765)}
\multiput(-1.5,0.85)(1,0){3}{\psline[linecolor=blue,linewidth=2pt]{->}(0.05,0.085)(.45,0.765)}
\multiput(-0.5,0.85)(-0.5,-0.85){1}{\psline[linecolor=blue,linewidth=2pt]{->}(-0.05,-0.085)(-.45,-0.765)}
\multiput(0.5,0.85)(-0.5,-0.85){2}{\psline[linecolor=blue,linewidth=2pt]{->}(-0.05,-0.085)(-.45,-0.765)}
\multiput(0,0)(1,0){2}{
\psline[linecolor=white,linewidth=3pt]{->}(-0.09,1.67)(-1.41,0.89)
\psline[linecolor=blue,linewidth=2pt]{->}(-0.09,1.67)(-1.41,0.89)
}
\psline[linecolor=white,linewidth=3pt]{->}(0,-1.7)(0.5,0.75)
\psline[linecolor=blue,linewidth=2pt]{->}(0,-1.7)(0.5,0.75)
\psline[linecolor=white,linewidth=3pt]{<-}(1,-1.7)(1.5,0.75)
\psline[linecolor=blue,linewidth=2pt]{<-}(1,-1.7)(1.5,0.75)
\rput(-1,0){\psline[linecolor=red,linewidth=2pt]{<-}(-0.09,1.67)(-1.41,0.89)}
\rput(-2,1.7){\psline[doubleline=true,linewidth=1pt, doublesep=1pt, linecolor=red]{->}(-0.05,-0.085)(-.45,-0.765)}
\rput(-2.5,0.85){\pscircle[fillstyle=solid,fillcolor=red]{.1}}
\psbezier[linecolor=blue,linewidth=2pt]{<-}(1.07,1.75)(3.5,4)(7,-6)(1.05,-1.78)
\psbezier[linecolor=white,linewidth=3pt]{<-}(-0.93,1.77)(1.5,3.85)(4,0.8)(3.05,-1.61)
\psbezier[linecolor=blue,linewidth=2pt]{<-}(-0.93,1.77)(1.5,3.85)(4,0.8)(3.05,-1.61)
\psbezier[linecolor=white,linewidth=3pt]{->}(0.93,1.75)(-4.5,5)(-3,-3)(-0.05,-1.78)
\psbezier[linecolor=blue,linewidth=2pt]{->}(0.93,1.75)(-4.5,5)(-3,-3)(-0.05,-1.78)
\psbezier[linecolor=white,linewidth=3pt]{->}(0.08,1.73)(2,3)(3,1)(3,-1.6)
\psbezier[linecolor=blue,linewidth=2pt]{->}(0.08,1.73)(2,3)(3,1)(3,-1.6)
\psbezier[linecolor=white,linewidth=3pt]{<-}(0.03,1.78)(3.5,5.5)(6,-5.5)(2.03,-1.78)
\psbezier[linecolor=blue,linewidth=2pt]{<-}(0.03,1.78)(3.5,5.5)(6,-5.5)(2.03,-1.78)
\psbezier[linecolor=white,linewidth=3pt]{->}(1.08,1.65)(4,0)(5,-3)(2.07,-1.74)
\psbezier[linecolor=blue,linewidth=2pt]{->}(1.08,1.65)(4,0)(5,-3)(2.07,-1.74)
\multiput(-3,1.7)(0.5,-0.85){5}{
\multiput(1,0)(1,0){4}{\pscircle[fillstyle=solid,fillcolor=lightgray]{.1}}}
\put(-2,1.9){\makebox(0,0)[bc]{\hbox{{$a_1$}}}}
\put(-1,1.9){\makebox(0,0)[bc]{\hbox{{$a_2$}}}}
\put(-0.1,1.9){\makebox(0,0)[bc]{\hbox{{$a_3$}}}}
\put(1.2,1.7){\makebox(0,0)[lc]{\hbox{{$a_4$}}}}
\put(-1.45,0.8){\makebox(0,0)[tr]{\hbox{{$b_1$}}}}
\put(-0.55,0.95){\makebox(0,0)[br]{\hbox{{$b_2$}}}}
\put(0.65,0.85){\makebox(0,0)[cl]{\hbox{{$b_3$}}}}
\put(1.65,0.85){\makebox(0,0)[cl]{\hbox{{$b_4$}}}}
\put(-1.15,0.05){\makebox(0,0)[br]{\hbox{{$c_1$}}}}
\put(-0.15,0.1){\makebox(0,0)[br]{\hbox{{$c_2$}}}}
\put(1.3,0.15){\makebox(0,0)[br]{\hbox{{$c_3$}}}}
\put(2.3,0.15){\makebox(0,0)[br]{\hbox{{$c_4$}}}}
\put(-0.65,-0.8){\makebox(0,0)[br]{\hbox{{$d_1$}}}}
\put(0.6,-0.7){\makebox(0,0)[bl]{\hbox{{$d_2$}}}}
\put(1.68,-0.75){\makebox(0,0)[bl]{\hbox{{$d_3$}}}}
\put(2.5,-0.75){\makebox(0,0)[bl]{\hbox{{$d_4$}}}}
\put(-0.15,-1.65){\makebox(0,0)[br]{\hbox{{$e_1$}}}}
\put(0.9,-1.7){\makebox(0,0)[cr]{\hbox{{$e_2$}}}}
\put(1.93,-1.77){\makebox(0,0)[tr]{\hbox{{$e_3$}}}}
\put(3.15,-1.7){\makebox(0,0)[cl]{\hbox{{$e_4$}}}}
\put(-2.5,0.7){\makebox(0,0)[tc]{\hbox{{$a_t$}}}}
%
}
\end{pspicture}
\caption{\small
In this quiver, $G_B=\langle a_1a_t\rangle$ depends only on $a_1$ and $a_t$.
}
\label{fi:N5}
\end{figure}
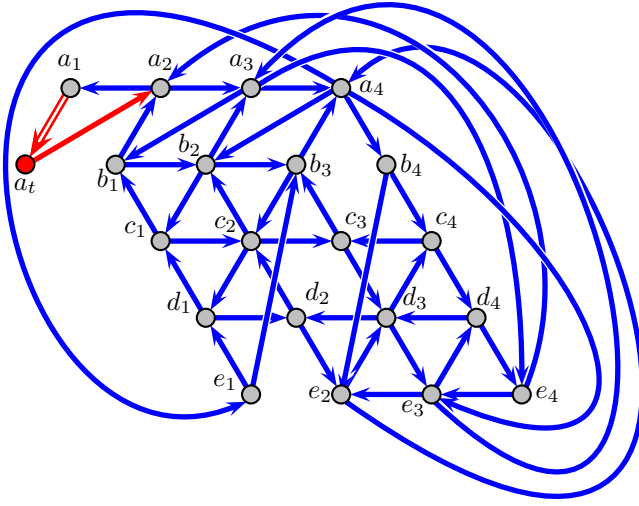

From the expression (\ref{G-45}) we have that $G_{4,5}$ has the structure
\be\label{G45-s}
G_{4,5}=s_1 a_2(a_1)^{1/2}+s_2 (a_1)^{1/2} + s_3 a_2^{-1}\bigl((a_1)^{1/2}+(a_1)^{-1/2}\bigr), 
\ee
where we let $s_i$ denote combinations commuting with $a_1$ and $a_t$ (and therefore not depending on $a_1$ and $a_2$). For the bracket $\{G_B,G_{4,5}\}$ we then have
\begin{align*}
\{G_B,G_{4,5}\}=&-\frac12 \bigl( (a_1a_t)^{1/2}-(a_1a_t)^{-1/'2} \bigr)\bigl(s_1 a_2(a_1)^{1/2}+s_2 (a_1)^{1/2} + s_3 a_2^{-1}(a_1^{1/2}-a_1^{-1/2}) \bigr)\\
&\quad+\left(\frac{a_t}{a_1}\right)^{1/2}\left( \frac12 s_1a_2(a_1)^{1/2}-\frac12 s_2(a_1)^{1/2}-\frac32 s_3a_2^{-1}a_1^{1/2}-\frac12 s_3(a_1)^{-1/2} \right),
\end{align*}
and we have that 
\begin{align}
&\frac12 G_BG_{4,5}+\{ G_B, G_{4,5}\}=(a_1a_t)^{-1/'2} \bigl(s_1 a_2(a_1)^{1/2}+s_2 (a_1)^{1/2} + s_3 a_2^{-1}a_1^{1/2}\bigr)+ (a_1a_t)^{1/'2} s_3 a_2^{-1}a_1^{-1/2} \nonumber\\
&\qquad\qquad\qquad\qquad\qquad\qquad+\left(\frac{a_t}{a_1}\right)^{1/2}\bigl( s_1a_2(a_1)^{1/2}-s_3a_2^{-1}a_1^{1/2}\bigr)\nonumber\\
&\qquad\qquad =(a_t)^{-1/2}a_2(1+a_t) s_1+(a_t)^{-1/2} s_2+(a_t)^{-1/2}a_2^{-1}s_3=G_{4,5}\bigl(a_1\to 1/a_t; \, a_2\to a_2(1+a_t)\bigr)
\end{align}
that is, the twist along $G_B$ is given by the mutation at $a_t$ with subsequent interchanging $a_1\leftrightarrow 1/a_t$ in the expression for $G_{4,5}$. Since all other $G_{i,5}$ are generated by the product and Poisson relations of $G_{4,5}$ and other $G_{i,i+1}$, all these relations are linear in $G_{4,5}$, and because $a_t$ commutes with all other $G_{i,i+1}$, we conclude that the action of the twist along $G_B$ on $\det(\mathbb A+\mathbb A^{\text{T}})$ is given by exactly the same substitution for variables $a_1$ and $a_2$. Next, since $G_{4,5}$ has the structure (\ref{G45-s}) and since $\det(\mathbb A+\mathbb A^{\text{T}})$ comprises terms that either do not depend on $G_{i,5}$ or are of order two in $G_{i,5}$, we obtain that
$$
\det(\mathbb A+\mathbb A^{\text{T}})=\alpha \xi a_1a_2^2+\beta a_1+\gamma \xi^{1/2} a_2a_1 +\delta\xi^{1/2} \frac{1+a_1}{a_2}+\rho\xi \frac{(1+a_1)^2}{a_1a_2^2}+\omega,
$$
where we let
$$
a_t=1/(a_1a_2^2\xi)\ \hbox{with}\ \xi=a_3^2a_4^2b_1b_2b_3 b_4c_1c_2c_3 c_4d_1d_2d_3 d_4e_1e_2e_3 e_4.
$$
Denoting
\be\label{subst}
a_1'=1/a_t=\xi a_1a_2^2\ \hbox{and}\  a_2'=a_2(1+a_t)=a_2\left(1+\frac{1}{\xi a_1a_2^2}\right),
\ee
we have that
$$
\det(\mathbb A+\mathbb A^{\text{T}})(a_1',a_2')-\xi a_2^2 \det(\mathbb A+\mathbb A^{\text{T}})(a_1,a_2)=(2\alpha-\xi\omega)a_2^2+\left(\frac{\alpha}{\xi}-\xi\rho \right)\frac 1{a_1} +(\gamma -\xi\delta)a_2+(\omega-2\xi\rho).
$$
It is now easy to see that the condition $\det(\mathbb A+\mathbb A^{\text{T}})=0$ is preserved upon substitution (\ref{subst}) if and only if 
$$
\alpha = \xi^2\rho,\quad \omega=2\xi\rho, \ \hbox{and}\ \gamma=\xi\delta,
$$
which can be checked by computer simulations. Upon satisfaction of these equalities, we can rewrite $\det(\mathbb A+\mathbb A^{\text{T}})$ in the form
\begin{align}
\det(\mathbb A+\mathbb A^{\text{T}})&=\alpha\left(\xi a_1a_2^2+2+2a_1+ \frac{(1+a_1)^2}{\xi a_1a_2^2}\right)+(\beta-2) a_1+\gamma\left(\xi^{1/2}a_2a_1+\frac{1+a_1}{\xi^{1/2}a_2} \right)\nonumber\\
&=a_1\bigl(\alpha G_B^2 +\gamma G_B+\beta-2 \bigr),\label{A5}
\end{align}
where $\alpha$, $\beta$, and $\gamma$ depend only on $a_3,\dots, e_4$. 
\begin{remark}
Note the striking similarity of expressions (\ref{A5}) in the case $n=5$ and (\ref{A3}) for $n=3$: in both expressions there is a single cluster variable ($f$ for $n=3$ and $a_1$ for $n=5$) such that $\det(\mathbb A+\mathbb A^{\text{T}})$ is proportional to this variable (note that $\det(\mathbb A+\mathbb A^{\text{T}})=2(\mathfrak M+4)$ for $n=3$); the proportionality coefficient is a second-degree polynomial in $G_B$ and is invariant under the twist along $G_B$; it is tempting to learn whether we can identify the coefficient functions $\alpha$, $\beta$, and $\gamma$ in (\ref{A5}) with some expressions polynomial in $G_{i,j}$ and $\widetilde G_{i,j}$ with $1\le i<j\le 4$ and whether similar statements are valid for higher odd $n$. We checked that it is not the case for $n=4$.
\end{remark}

\section{Conclusion}

In the conslusion we would like to indicate few questions that can be approached using the same technique.  
\begin{itemize}
\item quantization: the construction described in this paper allows straightforward quantization which will be described in a future publication;
\item symplectic structure: since (non-extended) $SL_n$ quivers are full-dimensional, their Poisson relations can be inverted producing the corresponding symplectic structures. Say, for $n=3$, the symplectic structure is
$$
\frac{ {\rm d} d}{d}\wedge\left(\frac{ {\rm d} e}{e}+\frac{ {\rm d} a}{a} \right) 
+\frac{ {\rm d} f}{f}\wedge\left(\frac{ {\rm d} c}{c}+\frac{ {\rm d} a}{a} \right) 
+\frac{ {\rm d} b}{b}\wedge\left(\frac{ {\rm d} c}{c}+\frac{ {\rm d} e}{e} \right), 
$$
and it must coincide with the standard Fenchel--Nielsen symplectic structure
$$
{\rm d}\ell_{1.2}\wedge {\rm d}\tau_{1,2}+{\rm d}\widetilde\ell_{1.2}\wedge {\rm d}\widetilde \tau_{1,2}+{\rm d}\ell_{\mathfrak M}\wedge {\rm d}\tau_{B},
$$
with the twists $\tau_{1,2}$, $\widetilde \tau_{1,2}$ defined in (\ref{twist-lezginka}). We expect similar relations to hold for higher $n$ as well;
\item calculating volumes $V_{g,0}$ of moduli spaces using Bowditch-like technique \cite{Bowditch} based on twist transformations; these volumes are known from the Mirzakhani's-type recursion relations \cite{DoNorbury} and from the topological recursion relations \cite{EO07}. For example, $V_{2,0}=\dfrac{43\pi^6}{2160}$.
\item explore the connection with link/knot invariants (see e.g.,~\cite{B-MSch}).
\end{itemize}

\section*{Acknowledgments}
Special thanks are due to Gus Schrader and Alexander Shapiro for the valuable discussion. The research of M.S. was partially supported by the NSF research grant DMS-1702115. While working on this project, M.S. was benefited from support of the following institutions and programs: Haifa University (Summer, 2022) Mathematical Institute of the University of Heidelberg (Summer 2022). The second author is grateful to all these institutions for their hospitality and outstanding working conditions they provided. The research of L.Ch. was partially supported by the Steklov International Mathematical Center (agreement No. 075-15-2019-1614).

\end{document}